\numberwithin{equation}{section}
\DeclareMathAlphabet{\mathpzc}{OT1}{pzc}{m}{it}
\begin{document}

\def\subsectionautorefname{Section}
\def\subsubsectionautorefname{Section}
\def\sectionautorefname{Section}
\def\equationautorefname~#1\null{(#1)\null}
\setcounter{tocdepth}{1}

\newcommand{\mynewtheorem}[4]{
  \if\relax\detokenize{#3}\relax 
    \if\relax\detokenize{#4}\relax 
      \newtheorem{#1}{#2}
    \else
      \newtheorem{#1}{#2}[#4]
    \fi
  \else
    \newaliascnt{#1}{#3}
    \newtheorem{#1}[#1]{#2}
    \aliascntresetthe{#1}
  \fi
  \expandafter\def\csname #1autorefname\endcsname{#2}
}

\newtheorem{theorem}{Theorem}[section]
\newtheorem{lemma}[theorem]{Lemma}
\newtheorem{lem}[theorem]{Lemma}
\newtheorem{rem}[theorem]{Remark}
\newtheorem{prop}[theorem]{Proposition}
\newtheorem{cor}[theorem]{Corollary}
\newtheorem{definition}[theorem]{Definition}
\newtheorem{example}[theorem]{Example}
\newtheorem{rmk}[theorem]{Remark}
\newtheorem{rmk*}[theorem]{Notation}


\def\defbb#1{\expandafter\def\csname b#1\endcsname{\mathbb{#1}}}
\def\defcal#1{\expandafter\def\csname c#1\endcsname{\mathcal{#1}}}
\def\deffrak#1{\expandafter\def\csname frak#1\endcsname{\mathfrak{#1}}}
\def\defop#1{\expandafter\def\csname#1\endcsname{\operatorname{#1}}}
\def\defbf#1{\expandafter\def\csname b#1\endcsname{\mathbf{#1}}}

\makeatletter
\def\defcals#1{\@defcals#1\@nil}
\def\@defcals#1{\ifx#1\@nil\else\defcal{#1}\expandafter\@defcals\fi}
\def\deffraks#1{\@deffraks#1\@nil}
\def\@deffraks#1{\ifx#1\@nil\else\deffrak{#1}\expandafter\@deffraks\fi}
\def\defbbs#1{\@defbbs#1\@nil}
\def\@defbbs#1{\ifx#1\@nil\else\defbb{#1}\expandafter\@defbbs\fi}
\def\defbfs#1{\@defbfs#1\@nil}
\def\@defbfs#1{\ifx#1\@nil\else\defbf{#1}\expandafter\@defbfs\fi}
\def\defops#1{\@defops#1,\@nil}
\def\@defops#1,#2\@nil{\if\relax#1\relax\else\defop{#1}\fi\if\relax#2\relax\else\expandafter\@defops#2\@nil\fi}
\makeatother

\defbbs{ZHQCNPALRVWS}
\defcals{ABCDOPQMNXYLTRAEHZKCIV}
\deffraks{apijklgmnopqueRCc}
\defops{IVC, PGL,SL,mod,Spec,Re,Gal,Tr,End,GL,Hom,PSL,H,div,Aut,rk,Mod,R,T,Tr,Mat,Vol,MV,Res,Hur, vol,Z,diag,Hyp,hyp,hl,ord,Im,ev,U,dev,c,CH,fin,pr,Pic,lcm,ch,td,LG,id,Sym,Aut,Log,tw,irr,discrep,BN,NF,NC,age,hor,lev,ram,NH,av,app,Quad,Stab,Per,Nil,Ker,EG,CEG,PB,Conf,MCG,Diff}
\defbfs{uvzwp} %

\def\ep{\varepsilon}
\def\ve{\varepsilon}
\def\abs#1{\lvert#1\rvert}
\def\dd{\mathrm{d}}
\def\WP{\mathrm{WP}}
\def\inj{\hookrightarrow}
\def\eq{=}

\def\i{\mathrm{i}}
\def\e{\mathrm{e}}
\def\st{\mathrm{st}}
\def\ct{\mathrm{ct}}
\def\rel{\mathrm{rel}}
\def\odd{\mathrm{odd}}
\def\even{\mathrm{even}}

\def\uC{\underline{\bC}}
\def\ol{\overline}

\def\Vrel{\bV^{\mathrm{rel}}}
\def\Wrel{\bW^{\mathrm{rel}}}
\def\twolev{\mathrm{LG_1(B)}}

\def\be{\begin{equation}}   \def\ee{\end{equation}}     \def\bes{\begin{equation*}}    \def\ees{\end{equation*}}
\def\ba{\be\begin{aligned}} \def\ea{\end{aligned}\ee}   \def\bas{\bes\begin{aligned}}  \def\eas{\end{aligned}\ees}
\def\={\;=\;}  \def\+{\,+\,} \def\m{\,-\,}

\newcommand*{\Tw}[1][\Lambda]{\mathrm{Tw}_{#1}}  
\newcommand*{\sTw}[1][\Lambda]{\mathrm{Tw}_{#1}^s}  

\newcommand{\HH}{{\mathbb H}}
\newcommand{\MM}{{\mathbb M}}
\newcommand{\bbC}{{\mathbb C}}
\newcommand{\TT}{{\mathbb T}}

\newcommand\PP{\mathbb P}
\renewcommand\R{\mathbb R}
\renewcommand\Z{\mathbb Z}
\newcommand\N{\mathbb N}
\newcommand\Q{\mathbb Q}
\renewcommand{\H}{\mathbb{H}}
\newcommand{\halfplane}{\mathbb{H}}
\newcommand{\chalfplane}{\overline{\mathbb{H}}}
\newcommand{\bk}{k}

\newcommand{\bfa}{{\mathbf a}}
\newcommand{\bfb}{{\mathbf b}}
\newcommand{\bfd}{{\mathbf d}}
\newcommand{\bfe}{{\mathbf e}}
\newcommand{\bff}{{\mathbf f}}
\newcommand{\bfg}{{\mathbf g}}
\newcommand{\bfh}{{\mathbf h}}
\newcommand{\bfm}{{\mathbf m}}
\newcommand{\bfn}{{\mathbf n}}
\newcommand{\bfp}{{\mathbf p}}
\newcommand{\bfq}{{\mathbf q}}
\newcommand{\bft}{{\mathbf t}}
\newcommand{\bfP}{{\mathbf P}}
\newcommand{\bfR}{{\mathbf R}}
\newcommand{\bfU}{{\mathbf U}}
\newcommand{\bfu}{{\mathbf u}}
\newcommand{\bfx}{{\mathbf x}}
\newcommand{\bfz}{{\mathbf z}}

\newcommand{\bfl}{{\boldsymbol{\ell}}}
\newcommand{\bfmu}{{\boldsymbol{\mu}}}
\newcommand{\bfeta}{{\boldsymbol{\eta}}}
\newcommand{\bftau}{{\boldsymbol{\tau}}}
\newcommand{\bfomega}{{\boldsymbol{\omega}}}
\newcommand{\bfsigma}{{\boldsymbol{\sigma}}}
\newcommand{\bfnu}{{\boldsymbol{\nu}}}
\newcommand{\bfrho}{{\boldsymbol{\rho}}}
\newcommand{\bfone}{{\boldsymbol{1}}}

\newcommand\cl{\mathcal}
\newcommand{\calH}{\mathcal{H}}

\newcommand{\calA}{\mathcal A}
\newcommand{\calK}{\mathcal K}
\newcommand{\calD}{\mathcal D}
\newcommand{\calC}{\mathcal C}
\newcommand\C{\mathcal C}
\newcommand{\calT}{\mathcal T}
\newcommand{\calB}{\mathcal B}
\newcommand{\calF}{\mathcal F}
\newcommand{\calV}{\mathcal V}
\newcommand{\calQ}{\mathcal Q}
\newcommand{\calX}{\mathcal X}
\newcommand{\calY}{\mathcal Y}
\newcommand{\calP}{\mathcal P}
\newcommand{\calJ}{\mathcal J}
\newcommand{\calS}{\mathcal S}
\newcommand{\calZ}{\mathcal Z}

\newcommand\pvd{\operatorname{pvd}}
\newcommand\per{\operatorname{per}}
\newcommand\thick{\operatorname{thick}}
\newcommand\rep{\operatorname{rep}}

\newcommand\perf{\operatorname{perf}}
\newcommand{\modules}{\operatorname{mod}}
\newcommand{\Modules}{\operatorname{Mod}}
\newcommand{\Loc}{\operatorname{Loc}}
\renewcommand{\Mod}{\operatorname{Mod}}
\renewcommand{\Hom}{\operatorname{Hom}}
\newcommand{\Ext}{\operatorname{Ext}}
\newcommand{\coker}{\operatorname{coker}}

\newcommand\Rep{\operatorname{Rep}}
\newcommand\ext{\operatorname{ext}}
\newcommand{\heart}{\heartsuit}

\newcommand{\sph}{\operatorname{sph}}
\newcommand{\Br}{\operatorname{Br}}
\renewcommand{\Tw}{\operatorname{Tw}}
\newcommand{\RHom}{\operatorname{RHom}}

\newcommand{\torsion}{\mathcal{T}}
\newcommand{\torsionfree}{\mathcal{F}}
\newcommand{\tstr}{\mathcal{L}}

\newcommand\cy{\mathrm{CY}}
\newcommand\CY{\mathrm{CY}}

\newcommand{\sgn}{\operatorname{sgn}}
\renewcommand{\GL}{\operatorname{GL}}
\renewcommand{\rk}{\operatorname{rank}}
\newcommand{\opL}{\operatorname{L}}

\newcommand\stab{\operatorname{Stab}}
\newcommand\gstab{\operatorname{GStab}}

\newcommand{\spn}{\operatorname{span}}
\newcommand{\GStab}{\operatorname{GStab}}
\newcommand{\PGStab}{\bP\mathrm{GStab}}
\newcommand{\MStab}{\operatorname{MStab}}
\newcommand{\PMStab}{\bP\mathrm{MStab}}
\newcommand{\Tilt}{\operatorname{Tilt}}

\def\oQ{\overline{Q}}
\def\bY{\mathbf{Y}}
\def\bX{\mathbf{X}}
\newcommand{\fmu}{\mu^\sharp}
\newcommand{\bmu}{\mu^\flat}
\newcommand{\Cone}{\operatorname{Cone}}
\newcommand\add{\operatorname{Add}}
\newcommand\Irr{\operatorname{Irr}}

\newcommand{\EGs}{\EG^{s}} 
\newcommand{\SEG}{\operatorname{SEG}} 
\newcommand{\pSEG}{\operatorname{pSEG}} 
\newcommand{\EGp}{\EG^\circ}       
\newcommand{\SEGp}{\SEG^\circ}       
\newcommand{\EGb}{\EG^\bullet}       
\newcommand{\SEGb}{\SEG^\bullet}       
\newcommand{\SEGV}{\SEG_{{^\perp}\calV}} 
\newcommand{\pSEGV}{\pSEG_{{^\perp}\calV}}
\newcommand{\SEGVb}{\SEGb_{{^\perp}\calV}} 
\newcommand{\pSEGVb}{\pSEG^\bullet_{{^\perp}\calV}}

\newcommand{\EGT}{\EG^\circ}
\newcommand{\uEG}{\underline{\EG}} 
\newcommand{\uCEG}{\underline{\CEG}} 

\newcommand{\CA}{\operatorname{CA}}
\newcommand{\OA}{\operatorname{OA}}
\newcommand{\wOA}{\widetilde{\OA}}
\newcommand{\wCA}{\widetilde{\CA}}
\newcommand{\BT}{\operatorname{BT}}
\newcommand{\SBr}{\operatorname{SBr}}
\newcommand\Bt[1]{\operatorname{B}_{#1}}
\newcommand\bt[1]{\operatorname{B}_{#1}^{-1}}

\newcommand{\Sim}{\operatorname{Sim}}
\def\dual{\iota}
\def\ivc{\iota_v}
\newcommand\ind{\operatorname{index}}
\newcommand{\Qgrad}{\overline{Q}}
\newcommand\diff{\operatorname{d}}
\def\Dsow{\Dcol}
\def\pvc{e \Gamma e}
\def\Qvc{Q_V^c}
\def\Gwt{\FGamma_\wt}
\def\wX{\widetilde{X}}

\def\DD{\mathbf{D}}
\def\uD{\underline{\calD}}
\def\uh{\underline{\calH}}
\def\uZ{\underline{Z}}
\def\us{\underline{\sigma}}
\newcommand{\isom}{\cong}

\newcommand{\tilt}[3]{{#1}^{#2}_{#3}}
\newcommand\Stap{\Stab^\circ} 
\newcommand\Stas{\Stab^\bullet}
\newcommand{\cub}{\operatorname{U}} 
\newcommand{\skel}{\wp} 

\newcommand{\mai}{\mathbf{i}} 

\newcommand\wT{\widetilde{\TT}} 
\newcommand{\Tri}{\Delta}
\newcommand{\deco}{\Delta}

\def\w{\mathbf{w}}
\def\wtpt{\w_{+2}}
\newcommand\surf{\mathbf{S}}  
\newcommand\surfo{{\mathbf{S}}_\Tri}  
\newcommand\surfw{\surf^\w}  
\newcommand\sow{\surf_\w}  
\newcommand\subsur{\Sigma}  
\newcommand\colsur{\overline{\surf}_\w}  
\def\Dsan{\calD_3(\surfo)}
\def\Dsow{\calD(\sow)}
\def\Dsub{\calD_3(\subsur)}
\def\Dcol{\calD(\colsur)}

\def\ww{node[white]{$\bullet$}node[red]{$\circ$}}
\def\nn{node{$\bullet$}}

\newcommand{\uk}{\mathbf{k}}
\def\sing{\operatorname{Sing}}

\newcommand\jiantou{edge[->]}
\newcommand\AS{\mathbb{A}}

\def\grad{\lambda}
\def\gms{\surf^\grad}
\def\gmsw{\surf^{\grad,\wt}_{\Tri}}
\def\iT{\TT_0}

\def\weta{\widetilde{\eta}}
\def\wzeta{\widetilde{\zeta}}
\def\walpha{\widetilde{\alpha}}
\def\wbeta{\widetilde{\beta}}
\def\wgamma{\widetilde{\gamma}}

\def\dsan{\calD_3(\surfo)}
\def\psan{\per(\surfo)}
\def\dsow{\Dsow}
\def\psow{\per(\colsur)}

\def\RP{\operatorname{RP}}
\def\Rs{\operatorname{RS}}
\newcommand{\ST}{\operatorname{ST}}  
\newcommand{\STp}{\ST^\circ}  

\newcommand{\Int}{\operatorname{Int}}

\def\hori{_{\operatorname{H}}}

\newcommand{\Note}[1]{\textcolor{red}{#1}}
\newcommand{\note}[1]{\textcolor{Emerald}{#1}}
\newcommand{\qy}[1]{\textcolor{cyan}{#1}}

\newcommand\foli[5]{
	\foreach \m in {0,#1,...,#5}{
		\coordinate (#2#4) at ($(#2)!.5!(#4)$);
		\draw[Emerald!50]plot [smooth,tension=.5] coordinates
		{(#2) ($(#2#4)!\m!(#3)$) (#4)};
		\draw[thick,gray](#2)to(#3)to(#4);
}}

\def\iA{\AS_0}
\def\iA{\AS_0}

\newcommand\Ind{\operatorname{Ind}}

\newcommand\xx{\mathbf{X}} 
\newcommand\surp{\xx^\circ}
\newcommand{\Zer}{\operatorname{Zero}}
\newcommand{\Pol}{\operatorname{Pol}}
\newcommand{\Crit}{\operatorname{Crit}}
\newcommand{\FQuad}{\operatorname{FQuad}}
\newcommand{\FQuab}{\FQuad^{\bullet}}

\newcommand{\Imgy}{\operatorname{Im}}
\newcommand{\numarc}{n}


\newcommand{\Exch}{\operatorname{Exch}}
\newcommand{\arrowIn}{
	\tikz \draw[-stealth] (-1pt,0) -- (1pt,0);
}

\newcommand{\cal}[1]{\mathcal{#1}}

\newlength{\halfbls}\setlength{\halfbls}{.8\baselineskip}

\newcommand*{\Teichmuller}{Teich\-m\"uller\xspace}

\newcommand\bra{\langle}
\newcommand\ket{\rangle}
\newcommand{\del}{\partial}
\newcommand{\deld}[1]{\frac{\del}{\del {#1} }}
\newcommand{\frap}{\frac{1}{2\pi\ii}}
\renewcommand{\Re}{\operatorname{Re}}
\renewcommand{\Im}{\operatorname{Im}}
\newcommand{\Rp}{\mathbb{R}_{>0}}
\newcommand{\Rm}{\R_{<0}}

\def\lift{\mathrm{lift}}
\def\ul{\underline}
\newcommand\<{\langle}
\renewcommand\>{\rangle}
\def\h{\calH}
\def\D{\calD}
\def\aut{\mathpzc{Aut}}
\tikzcdset{arrow style=tikz, diagrams={>=stealth}}
\usetikzlibrary{arrows}

\def\acf{\mathbf{k}}
\def\PTS{\mathbb{P}T\surf}
\newcommand\coho[1]{\operatorname{H}^{#1}}
\def\MTS{\mathbb{R}T\surf^{\lambda}}

\def\ora{\overrightarrow}
\def\harc{\ora{\gamma_h}}
\def\varc{\ora{\gamma_v}}
\def\cube{\mathrm{U}}

\newcommand{\on}[1]{\operatorname{#1}}
\newcommand{\iv}[1]{(#1)^{-1}}


\title[Verdier quotients of 3-Calabi-Yau quiver categories]{Verdier quotients of Calabi-Yau categories from quivers with potential}

\author{Anna Barbieri}
\address{A.B.:  Dipartimento di Informatica, 
Universit\`a di Verona, 
Strada Le Grazie 15, 37134 Verona - Italy }
\email{anna.barbieri@univr.it}
\thanks{Research of A.B.\ was supported by the
project \emph{SQUARE - Structures for Quivers, Algebras and Representations}, financed 
PRIN-2022, MUR}
\author{Yu Qiu}
\address{Qy:
Yau Mathematical Sciences Center and Department of Mathematical Sciences,
Tsinghua University,
100084 Beijing, China.
\&
Beijing Institute of Mathematical Sciences and Applications, Yanqi Lake, Beijing, China}
\email{yu.qiu@bath.edu}
\thanks{Research of Qy was supported by National Natural 
Science Foundation of China (Grant No. 12425104 and 12031007) and 
National Key R\&D Program of China (No. 2020YFA0713000).
}

\begin{abstract}
We investigate a class of triangulated categories obtained as Verdier quotients of 3-Calabi-Yau categories combinatorially described by quivers with potential from (decorated) marked surfaces. We study their bounded t-structures and consider in particular the exchange graphs of hearts and silting objects respectively, and show that the Koszul isomorphism between these graphs is preserved under Verdier quotient.
\end{abstract}
\maketitle
\tableofcontents

\section{Introduction}
\label{sec_intro}

In this paper we study a class of triangulated categories which 
arise as Verdier quotient of Calabi-Yau categories from a differential graded algebra of 
a quiver with potential $(Q,W)$, 
and which can be associated with some mixed-angulation of a marked 
Riemann surface with boundary. 
\par\medskip

For any differential graded (dg) algebra $\Lambda$, we denote 
by $\calD(\Lambda)$, $\per(\Lambda)$, and $\pvd(\Lambda)$ the derived category, the 
perfect derived category, and the perfectly valued derived 
category of $\Lambda$ respectively. 
Let $e$ be an idempotent of a non-positive dg algebra $\Gamma$. Triangulated categories 
over $e\Gamma e$ appear for instance in the context of 
singularity categories in \cite{kalckyang1,kalckyang2}, and 
in the context of weighted marked surfaces in \cite{BMQS}. 
Here we let $\Gamma=\Gamma(Q,W)$ be the 
Ginzburg dg algebra of Calabi-Yau dimension $3$ from a quiver 
with potential $(Q,W)$, and fix $e=\sum_{j\not\in I}e_j$, the sum of trivial 
paths $e_i$ associated with the complement of a subset $I$ of 
the vertices $Q_0$ of $Q$. Let $\Gamma_I$ be the Ginzburg dg algebra associated with the restriction of $(Q,W)$ to $I$. 
It is known that $\calD(e\Gamma e)$ and $\pvd(e\Gamma e)$ are 
Verdier quotients of $\calD(\Gamma)$ and $\pvd(\Gamma)$ by $\calD(\Gamma_I)=\calD(\Gamma/\Gamma e\Gamma)$ and $\pvd(\Gamma_I)=\pvd(\Gamma/\Gamma e\Gamma)$ 
respectively \cite{kalckyang2},
\[0 \to \calD(\Gamma/\Gamma e\Gamma)\to \calD(\Gamma) \to \calD(e\Gamma e) \to 0,\] 
\[0 \to \pvd(\Gamma/\Gamma e\Gamma)\to \pvd(\Gamma) \to \pvd(e\Gamma e)  \to 0.\] 
While the first class fits into a recollement of triangulated 
categories 
induced by idempotents which is pretty well studied (see for instance 
\cite{psarou} and \cite{kalckyang2}), the latter are 
less known and a priori much less 
well-behaved triangulated categories. 
\par\medskip
The goal of this paper is to provide a comprehensive description of 
the derived categories of $e\Gamma e$ and to study 
properties of $\pvd(\Gamma)$ that descend to 
$\pvd(e\Gamma e)$. 
We recall some preliminaries in Section \ref{sec_prelim}. In 
Section \ref{sec_loc_pvd} we 
introduce $\calD(e\Gamma e)$, $\per(e\Gamma e)$, $\pvd(e\Gamma e)$, 
describe their relations, and we adapt the gluing method of bounded 
t-structures in a recollement, by Beilinson-Bernstein-Deligne, to the sequence of perfectly 
valued categories. 
In Sections \ref{sec_DMS}, 
\ref{sec_tstructures}, and \ref{sec_duality}, we mainly restrict our 
attention to quivers with potential coming from triangulations of marked Riemann 
surfaces with boundary components. 
We are interested in bounded t-structures of $\pvd(e\Gamma e)$ 
regarded as a 
Verdier quotient and in Happel-Reiten-Smal\o\,(HRS) tilts 
between them. The notions of bounded t-structures 
and a special case of tilting (simple tilts) are compactly 
encoded in a graph, and we study a HRS-tilting exchange 
graph of $\pvd(e\Gamma e)$. After recalling simple-projective 
duality for $\Gamma$, we state a correspondence between tilting 
and silting exchange graphs associated with $e\Gamma e$, 
obtaining a \emph{relative} simple-projective correspondence,  
that is a duality between length hearts 
with finitely many simples and silting 
objects at the level of 
quotient triangulated categories. 
This kind of correspondence is also known as 
Koenig-Yang correspondence.
\par \medskip
In the rest of the Introduction we summarize the main results. 
We refer the reader to the list of notation at 
page \pageref{list_of_notation} for the relevant definitions. 

\subsection*{Bounded t-structures under Verdier quotient}

The exchange graph $\EG(\calD)$ of a triangulated category $\calD$ is the graph 
whose vertices are hearts of bounded t-structures on $\calD$ and 
whose directed edges  
correspond to simple forward HRS-tilts, i.e. 
tilts at a torsion-free class generate by a 
simple object. A complete description of 
this graph might be difficult to obtain, so it is 
standard to rectrict our 
attention to a connected component containing a fixed 
distinguished heart as a vertex. In this paper we use the 
terminology finite heart to mean the heart of a bounded 
t-structure which is finite lenght and has finitely many 
simple objects, up to isomorphism.
\par
Let $\Gamma=\Gamma(Q,W)$ be the Ginzburg algebra of a 
quiver with potential $(Q,W)$ dual to 
a triangulation of a decorated marked surface. 
The exchange graph $\EGp\left(\pvd(\Gamma)\right)$ is the connected 
component of the exchange graph of $\pvd(\Gamma)$ containing 
the bounded t-structure with heart $\rep(Q,W)$ as a vertex. 
It was studied in \cite{kq} and is related 
with cluster theory and the exchange graph of 
triangulations of a (decorated) marked 
surface (\cite{qiubraid16}). 
\par 
We consider the category $\pvd(e\Gamma e)$ regarded, as we 
have seen above, as the 
quotient $\pvd(\Gamma)/\pvd(\Gamma_I)$, and we define 
the subgraph 
$\EG^\bullet(\pvd(e\Gamma e))$ of $\EG(\pvd(e\Gamma e))$, 
whose vertices arise 
from localization of finite hearts of $\pvd(\Gamma)$ 
in $\EGp\left(\pvd\Gamma\right)$. We show that it can be 
read off from $\EG(\pvd\Gamma)$, and that any simple forward tilt 
of a finite heart in $\EG^\bullet(\pvd(e\Gamma e))$ \lq\lq lifts\rq\rq\ 
to a simple forward tilt in $\pvd(\Gamma)$. This relies on the 
construction of $(Q,W)$ as the dual quiver of the triangulation of 
a marked surface. 
Denote by 
$\EGp(\pvd\Gamma, \pvd\Gamma_I)$ the full subgraph 
of $\EGp(\pvd\Gamma)$ whose hearts induce a bounded 
t-structure on $\pvd(e\Gamma e)$. 
\begin{theorem}[{Theorem \ref{quot_graph}}]\label{thm_intro1}
    The graph $\EG^\bullet(\pvd(e\Gamma e))$ 
    is isomorphic to the quotient of the relative exchange 
    graph $\EGp(\pvd\Gamma, \pvd\Gamma_I)$ obtained by contracting edges 
    labelled by a simple object in $\pvd(\Gamma_I)$
    \[ 
    \EG^\circ(\pvd\Gamma)\supset \EG^\circ(\pvd\Gamma,\pvd\Gamma_I) 
    \twoheadrightarrow \EG^\bullet(\pvd(e\Gamma e)).
    \]
It consists of connected components of $\EG(\pvd(e\Gamma e))$.
\end{theorem}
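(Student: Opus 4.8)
The plan is to realise the surjection in the statement as the effect on exchange graphs of the BBD-type localization of Section~\ref{sec_loc_pvd}, and then to analyse its fibres. Write $q\colon\pvd(\Gamma)\to\pvd(e\Gamma e)$ for the Verdier quotient functor, with kernel $\pvd(\Gamma_I)$. The first step is to record that a vertex $\calH$ of $\EG^\circ(\Gamma,\Gamma_I)$ --- a finite heart of $\pvd(\Gamma)$ whose intersection $\calH_I:=\calH\cap\pvd(\Gamma_I)$ is a bounded heart of $\pvd(\Gamma_I)$ --- descends, via the gluing of Section~\ref{sec_loc_pvd}, to a bounded heart $\calH^\bullet$ of $\pvd(e\Gamma e)$ whose simple objects are precisely the images $q(S)$ of those simples $S$ of $\calH$ that do not lie in $\pvd(\Gamma_I)$, the remaining simples being sent to $0$. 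By the definition of $\EG^\bullet(e\Gamma e)$ this assignment is a surjection of vertex sets $\Phi\colon\EG^\circ(\Gamma,\Gamma_I)\twoheadrightarrow\EG^\bullet(e\Gamma e)$; the inclusion $\EG^\circ(\Gamma,\Gamma_I)\subset\EG^\circ(\Gamma)$ on the left of the displayed chain is tautological.

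Next I would promote $\Phi$ to a morphism of graphs contracting exactly the edges labelled by a simple of $\pvd(\Gamma_I)$. Given an edge out of $\calH$, say the simple forward HRS-tilt at a simple $S$, there are two cases. If $S\in\pvd(\Gamma_I)$: the tilted heart $\calH'$ again lies in $\EG^\circ(\Gamma,\Gamma_I)$, with $\calH'\cap\pvd(\Gamma_I)$ the simple forward tilt of $\calH_I$ at $S$, and since $q(S)=0$ the gluing data is unchanged modulo $\pvd(\Gamma_I)$, so $\Phi(\calH')=\Phi(\calH)$ --- this edge is contracted. If $S\notin\pvd(\Gamma_I)$: one checks that $\calH'\cap\pvd(\Gamma_I)=\calH_I$ is still a bounded heart, so $\calH'\in\EG^\circ(\Gamma,\Gamma_I)$, and, using the compatibility of the gluing with HRS-tilting, that $\Phi(\calH')$ is the simple forward tilt of $\calH^\bullet$ at $q(S)$ --- this edge maps to an edge. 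Hence $\Phi$ factors through the quotient of $\EG^\circ(\Gamma,\Gamma_I)$ by contraction of the $\pvd(\Gamma_I)$-labelled edges, yielding a graph morphism $\overline\Phi$ onto $\EG^\bullet(e\Gamma e)$. Running this in reverse, each simple $\bar S$ of a heart $\calH^\bullet=\Phi(\calH)$ equals $q(S)$ for a unique simple $S\notin\pvd(\Gamma_I)$ of $\calH$, and the forward and backward tilts of $\calH^\bullet$ at $\bar S$ are again values of $\Phi$; so $\EG^\bullet(e\Gamma e)$ is closed under adjacency in $\EG(e\Gamma e)$, i.e.\ a union of connected components, and $\overline\Phi$ is surjective on edges.

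The remaining, and principal, step is the injectivity of $\overline\Phi$. Suppose $\Phi(\calH_1)=\Phi(\calH_2)=\calH^\bullet$. Each $\calH_i$ is a lift of $\calH^\bullet$ along $q$, and the gluing construction shows that such a lift is pinned down by the pair $\big(\calH^\bullet,\calH_i\cap\pvd(\Gamma_I)\big)$; thus it suffices to connect $\calH_1$ to $\calH_2$ by $\pvd(\Gamma_I)$-labelled edges. Now $\calH_1\cap\pvd(\Gamma_I)$ and $\calH_2\cap\pvd(\Gamma_I)$ are bounded hearts of $\pvd(\Gamma_I)$, and in the geometric setup $\Gamma_I$ is again a surface Ginzburg algebra, so by the connectivity results of \cite{kq,qiubraid16} they are joined by a finite chain of simple tilts. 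Gluing $\calH^\bullet$ with the intermediate hearts along this chain produces a path in $\EG^\circ(\Gamma,\Gamma_I)$ from $\calH_1$ to $\calH_2$ whose every edge is labelled by a simple of $\pvd(\Gamma_I)$; hence $\calH_1=\calH_2$ in the contracted graph, and a parallel argument with a fixed distinguished non-$\pvd(\Gamma_I)$ simple handles edges. Combined with the previous paragraph, $\overline\Phi$ is an isomorphism, which is the assertion of Theorems~\ref{thm_eg_eGe} and~\ref{quot_graph}.

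The hardest part will be the gluing input that underlies the whole argument: one must show that the localization of Section~\ref{sec_loc_pvd} carries every compatible pair $(\calH^\bullet,K)$, with $K$ a bounded heart of $\pvd(\Gamma_I)$, to a genuine bounded heart of $\pvd(\Gamma)$ restricting to $K$, and that it is functorial enough for an entire chain of $\pvd(\Gamma_I)$-tilts to be lifted coherently. Because $(\pvd(\Gamma_I),\pvd(\Gamma),\pvd(e\Gamma e))$ is only a Verdier localization and not a recollement, this cannot be imported verbatim from Beilinson--Bernstein--Deligne, and it is precisely here that the surface hypothesis and the combinatorial description of hearts and torsion pairs in terms of arcs and curves do the work. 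A second, more local, point is the claim used in the second step that a simple forward tilt at a simple not in $\pvd(\Gamma_I)$ preserves boundedness of the restriction to $\pvd(\Gamma_I)$; this is again transparent in the surface model but needs a proof in general.
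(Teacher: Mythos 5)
Your outline is plausible at a high level, but there are two places where the argument is missing exactly the content that the paper's proof supplies, and one place where you have misdiagnosed where the real work lies.

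First, in the edge analysis for $S\notin\pvd(\Gamma_I)$: you assert ``one checks that $\calH'\cap\pvd(\Gamma_I)=\calH_I$'' and then say $\Phi(\calH')=\mu^\sharp_{q(S)}\calH^\bullet$ follows from ``compatibility of the gluing with HRS-tilting''. Neither claim is automatic. The statement that the $\calV$-intersection is unchanged by a tilt at $S\notin\calV$, and that the induced heart is the simple tilt at $q(S)$, holds precisely when $\Ext^1_{\calH}(T,S)=0$ for all $T\in\calH\cap\calV$ (this is Proposition~\ref{prop_lift_simple_tilt}). That vanishing is not a hypothesis here; it must be deduced from the mere fact that \emph{both} endpoints of the edge lie in $\EGp(\Gamma,\Gamma_I)$. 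This deduction is the ``On the other hand'' step in the paper's proof of Theorem~\ref{quot_graph}: if some $T=S_k\in\calH\cap\calV$ had $\Ext^1(S_k,S)\neq0$, the tilted heart would contain $S_k$ as a nontrivial extension of $S[1]$ by $F_k$, contradicting Serreness of $\calH'\cap\calV$ in $\calH'$. Invoking ``gluing'' here is circular: the Psaroudakis-type gluing of torsion pairs requires exactly a compatibility condition of this kind as a hypothesis; it cannot be used to avoid checking it.

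Second, and more importantly, the sentence ``the forward and backward tilts of $\calH^\bullet$ at $\bar S$ are again values of $\Phi$'' is where essentially the entire difficulty of the theorem sits, and you state it without argument. Given an arbitrary $\calV$-compatible heart $\calH$ with $\Phi(\calH)=\calH^\bullet$ and a simple $S\notin\calV$, the $\Ext^1$-vanishing condition will usually fail for $\calH$ itself, so one cannot simply tilt $\calH$ at $S$. The paper's Proposition~\ref{ExConvRep} produces, by a finite sequence of simple tilts at simples \emph{in} $\pvd(\Gamma_I)$, a heart $\mu^\sharp_\torsionfree\calH$ with the same image under $\Phi$ in which $S$ is still simple and the $\Ext^1$-vanishing does hold; this is a genuinely combinatorial argument, using the triangulated surface model (flipping edges to expose the diagonals $d_k,d_\ell$ of the two polygons adjacent to the arc $S$), and it is precisely here that the geometric hypothesis on $(Q,W)$ enters the proof of Theorem~\ref{quot_graph}. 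Your closing paragraph locates the surface input in a BBD-type ``ungluing'' result that would lift a compatible pair $(\calH^\bullet,K)$ to a heart of $\pvd(\Gamma)$; the paper never needs such a result for this theorem, and it sidesteps the absence of a recollement at the $\pvd$ level entirely by working \emph{inside} $\pvd(\Gamma)$, navigating the fibre of $\Phi$ via $\calV$-labelled tilts. Your injectivity argument via connectivity of $\EGp(\Gamma_I)$ and gluing is therefore a genuinely different (and at present incomplete) route; it would also require knowing that the $\calV$-restrictions of hearts in $\EGp(\Gamma)$ all land in a single connected component of $\EG(\pvd\Gamma_I)$, which you have not verified.
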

At this stage, we are 
not able to exclude the existence of spurious 
components of $\EG(\pvd(e\Gamma e))$. 
The vertices of $\EGb(\pvd(e\Gamma e))$ are finite hearts of bounded 
t-structures in $\pvd(e\Gamma e)$ of the form $\cl H/\cl H_I$ for 
some finite heart $\cl H$ of $\pvd(\Gamma)$ and Serre 
subcategory $\cl H_I=\cl H\cap\pvd\Gamma_I$. They are 
module categories over finite dimensional algebras whose 
underlying quiver is deduced explicitly from $(Q,W)$ and $I$. 
\par\smallskip
A natural question is how t-structures obtained by a generic tilt 
at a torsion pair in a heart of $\pvd(e\Gamma e)$ look like. 
Reachable bounded t-structures of $\pvd(e \Gamma e)$, i.e., those 
that are intermediate with respect to a t-structure in 
$\EG^\bullet(\pvd(e\Gamma e))$, turn out to be equivalent to 
abelian quotients of some bounded heart in $\pvd(\Gamma)$. 

\begin{prop}[{Corollary \ref{any_heart_quot}}]\label{thms_intro2} 
Let $\ol{\cl H} = \cl H/(\cl H\cap \pvd\Gamma_I)\in \EGb(\pvd (e\Gamma e))$ and 
$\mu_f^\sharp\ol{\cl H}$ be the heart of another bounded 
t-structure in $\pvd(e\Gamma e)$ obtained by tilting $\ol{\cl H}$ at a 
torsion free class $f$. 
Then there exists a torsion-free class $\torsionfree\subset \cl H$ 
and $\cl H'= \mu^\sharp_\torsionfree\cl H$ in $\pvd\Gamma$ 
such that 
$\mu_f^\sharp\ol{\cl H} = \cl H' /\left(\cl H' \cap\pvd\Gamma_I\right)$.
    \end{prop}
This depends on the existence of recollements of abelian and 
triangulated categories at the level of module categories over $J:=H^0\Gamma$ 
and derived categories over $\Gamma$ 
\[
\xymatrix@C=2cm{
\calD(\Gamma/\Gamma e\Gamma) \ar[r]^{\iota} 
& \calD(\Gamma) \ar[r]^{\pi}\ar@/^1pc/[l]_{p}\ar@/_1pc/[l]_{q} 
&  \calD(e\Gamma e)\ar@/^1pc/[l]_{r}\ar@/_1pc/[l]_{\ell} \\
\pvd(\Gamma/ \Gamma e\Gamma) \ar@{}[u]|{\bigcup} \ar[r]^{\iota} & 
\pvd(\Gamma) \ar[r]^{\pi} \ar@{}[u]|{\bigcup} & \pvd(e\Gamma e) \ar@{}[u]|{\bigcup}\\
\modules (J/JeJ) \ar[r]^{\iota} \ar@{}[u]|{\bigcup}
& \modules(J) \ar[r]^{\pi}\ar@/^1pc/[l]_{p}\ar@/_1pc/[l]_{q} \ar@{}[u]|{\bigcup}
&  \modules(e J e)\ar@/^1pc/[l]_{r}\ar@/_1pc/[l]_{\ell}\ar@{}[u]|{\bigcup}
}
\]
\smallskip

\noindent and does not require 
that $(Q,W)$ comes from 
a triangulation.

\subsection*{Koszul duality under Verdier quotient}
The second main result of the paper generalizes to $e\Gamma e$ the 
simple-projective duality for $\Gamma$, involving 
a bijection between finite hearts of $\pvd(\Gamma)$ and silting objects 
of $\per(\Gamma)$. 
Silting theory is a recent area 
of research that is receiving a 
lot of attention due to its connections with other areas of 
representation theory. 
If $\Lambda$ is a dg algebra, a silting object in $\per(\Lambda)$ 
is a direct 
sum $\mathbf{Y}=\oplus Y_i$ of non-isomorphic indecomposables that generates $\per(\Lambda)$. It 
admits a notion of silting 
mutation with respect to a summand, which is used to introduce 
the silting exchange graph $\SEG(\per(\Lambda))$. 
Originally, bijections between finite hearts and silting objects
have been studied for example by Keller--Nicol\'as, 
King-Qiu in \cite{kq}, Koenig-Yang in \cite{KoY}, to which we refer for contextualisation.  
Recent works in the context of non-positive dg algebras are 
\cite{SuYang}, where it is also related with 
a manifestation of Koszul duality, and \cite{fushimi}, to 
cite some examples. 
In the context of 3-Calabi-Yau algebras $\Gamma$ from decorated marked surfaces, the simple-projective duality, expressed as an isomorphism between (connected components of) $\EG(\pvd(\Gamma))$ and $\SEG(\per(\Gamma))$, is preserved by Verdier localization to $\pvd(e\Gamma e)$.
\begin{theorem}[{Theorem \ref{thm:SPdual}}]\label{thm_intro3}
    There exists an isomorphism of graphs
\[\EG^\bullet(\pvd(e\Gamma e)) \simeq \SEG^\bullet(\per(e\Gamma e)),\]
where $\SEG^\bullet(\per(e\Gamma e))$ is a full subgraph of the silting exchange graph of $\per(e\Gamma e)$ introduced in Definition \ref{def_seg_bullet}.
\end{theorem}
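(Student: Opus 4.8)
The plan is to transport the simple-projective (or Koszul) duality for $\Gamma$ across the two Verdier localizations $\pvd\Gamma\to\pvd(e\Gamma e)$ and $\per\Gamma\to\per(e\Gamma e)$, using the relative exchange graphs as the bridge. Recall from \cite{kq} that, for $\Gamma=\Gamma(Q,W)$ coming from a triangulated decorated marked surface, there is an isomorphism of graphs $\mathrm{SP}\colon\EGp(\Gamma)\xrightarrow{\ \sim\ }\SEGp(\per\Gamma)$ sending a finite heart $\calH$ with simples $S_1,\dots,S_n$ to the basic silting object $\bY_\calH=\bigoplus_i Y_i$ of $\per\Gamma$ determined by the dual-basis relations $\Hom_{\per\Gamma}(Y_i,S_j[k])\isom\delta_{ij}\delta_{k0}\,\acf$, and matching the simple forward HRS-tilt at $S_i$ with the elementary silting mutation at $Y_i$. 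The first step is the relative refinement of this: $\mathrm{SP}$ restricts to an isomorphism between $\EGp(\Gamma,\Gamma_I)$ and the full subgraph $\SEGp(\per\Gamma,\calP_I)$ of $\SEGp(\per\Gamma)$ consisting of those silting objects $\bY$ for which the partial silting object $\bigoplus_{Y_i\in\calP_I}Y_i$ is a silting object of $\calP_I:=\thick_{\per\Gamma}(e_i\Gamma\mid i\in I)$, the kernel of the localization $q\colon\per\Gamma\to\per(e\Gamma e)$ (note $e\,e_i=0$ for $i\in I$). The content here is a characterization of the relative condition on both sides in terms of the dual basis: a finite heart $\calH$ restricts to a bounded heart of $\pvd\Gamma_I$ exactly when $|I|$ of its simples lie in $\pvd\Gamma_I$ (a Serre subcategory of the length category $\calH$ is determined by the simples it contains, and is a finite heart of $\pvd\Gamma_I$ precisely when it has full rank $|I|$), and these are exactly the $S_i$ whose duals $Y_i$ lie in $\calP_I$, because $\mathrm{SP}$ restricts to the simple-projective duality $\EGp(\Gamma_I)\cong\SEGp(\calP_I)$, all relevant $\Hom$-pairings being computed inside these thick subcategories.

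Next I would show that the two relative graphs carry \emph{the same} contraction onto the two $\bullet$-graphs. On the t-structure side this is exactly Theorem \ref{thm_intro1}: $\EG^\bullet(\pvd(e\Gamma e))$ is the quotient of $\EGp(\Gamma,\Gamma_I)$ obtained by contracting the edges labelled by a simple of $\pvd\Gamma_I$, with vertex map $\calH\mapsto\calH/\calH_I$. On the silting side I would establish the parallel statement that $\SEG^\bullet(\per(e\Gamma e))$ of Definition \ref{def_seg_bullet} is the quotient of $\SEGp(\per\Gamma,\calP_I)$ obtained by contracting the edges that are mutations at a summand lying in $\calP_I$, with vertex map $\bY\mapsto\ol\bY$, where $\ol\bY$ is the image of $\bY$ under $q$ with its $\calP_I$-summands discarded. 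Two lemmas are needed: (i) $\ol\bY$ is a genuine silting object of $\per(e\Gamma e)$ — since $q$ is triangulated and essentially surjective up to direct summands, $\ol\bY$ generates, and the vanishing of negative self-extensions follows from the dual-basis relations upstairs tested against the surviving simples $\ol S_i$, $i\notin I$; and (ii) a mutation at a summand $Y_i\in\calP_I$ leaves $\ol\bY$ unchanged, because the triangle exhibiting the new summand and the approximation it uses live over $\calP_I=\ker q$, so they become isomorphisms after $q$, while $q(Y_i)$ is discarded. Granting these, the square
\[
\begin{CD}
\EGp(\Gamma,\Gamma_I) @>{\mathrm{SP}}>{\sim}> \SEGp(\per\Gamma,\calP_I)\\
@VVV @VVV\\
\EG^\bullet(\pvd(e\Gamma e)) @>>> \SEG^\bullet(\per(e\Gamma e))
\end{CD}
\]
has both verticals realized as contracting the same set of edges (those labelled by an object of $\pvd\Gamma_I$, resp.\ $\calP_I$, which correspond under $\mathrm{SP}$), so $\mathrm{SP}$ descends to a surjective morphism of graphs along the bottom; it is injective on vertices since two lifts of a fixed $\ol\calH$ differ by a sequence of internal simple tilts, which under $\mathrm{SP}$ become internal mutations and hence leave $\ol\bY$ unchanged.

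It remains to promote this vertex bijection to an isomorphism of graphs, by checking it is a local isomorphism at every vertex. Fix $\ol\calH=\calH/\calH_I$, with simples $\ol S_i$ ($i\notin I$) and corresponding silting object $\ol\bY=\bigoplus_{i\notin I}\ol Y_i$. A simple forward tilt of $\ol\calH$ at $\ol S_i$ lifts, by the lifting property underlying Theorem \ref{thm_intro1} (Theorem \ref{quot_graph}), to a simple forward tilt of some lift $\calH'$ of $\ol\calH$ at the simple $S_i\notin\pvd\Gamma_I$; applying $\mathrm{SP}$ upstairs this becomes a mutation of $\bY_{\calH'}$ at $Y_i\notin\calP_I$, which projects to a mutation of $\ol\bY$ at $\ol Y_i$. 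Conversely a mutation of $\ol\bY$ at $\ol Y_i$ lifts to a silting object of $\per\Gamma$ differing from $\bY_{\calH'}$ only at the index $i$ and at indices in $I$, hence to a simple forward tilt at $S_i\notin\pvd\Gamma_I$ of some lift of $\ol\calH$ (up to internal tilts), which projects to the simple forward tilt of $\ol\calH$ at $\ol S_i$. Thus edges correspond bijectively and the vertex bijection upgrades to the asserted isomorphism $\EG^\bullet(\pvd(e\Gamma e))\cong\SEG^\bullet(\per(e\Gamma e))$.

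The main obstacle is Step 2: showing that silting objects and silting mutation are compatible with the Verdier quotient when $\per(e\Gamma e)$ is no longer Calabi-Yau. Lemmas (i) and (ii), and the lifting of mutations needed in Step 3, cannot be obtained by imitating the Calabi-Yau argument of \cite{kq}; they must be routed through the upstairs duality, so the relative silting graph $\SEGp(\per\Gamma,\calP_I)$ and the identification of $\calP_I$ with $\ker q$ have to be set up precisely enough that the two contractions in the square above are literally the same quotient. A secondary technical point is the behaviour of $q$ on perfect subcategories — in particular that $q\colon\per\Gamma\to\per(e\Gamma e)$ is well-defined with kernel $\calP_I$ and essentially surjective up to direct summands — which is used both for lemma (i) and to guarantee that every vertex of $\SEG^\bullet(\per(e\Gamma e))$ genuinely lifts to $\SEGp(\per\Gamma,\calP_I)$.
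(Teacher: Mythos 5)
Your strategy — transport the upstairs duality $\EGp(\pvd\Gamma)\cong\SEGp(\per\Gamma)$ across the two localizations — is the right idea in spirit, but the way you set up the silting side has two genuine problems.

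First, your Step 1 asserts that a $\calV$-compatible heart $\calH$ corresponds under $\mathrm{SP}$ to a silting object $\bY_\calH$ whose summands dual to simples $S_i\in\pvd\Gamma_I$ lie in $\calP_I:=\thick(e_i\Gamma\mid i\in I)$. This is false after a single tilt. Take $Q=A_3$ ($1\to2\to3$), $I=\{2\}$, and the standard heart $\calH=\modules J$; the dual silting object is $P_1\oplus P_2\oplus P_3$ with $Y_2=P_2\in\calP_I$. Now forward-tilt at $S_2\in\calK$; the heart remains $\calV$-compatible (internal tilt), and the silting object mutates to $P_1\oplus\Cone(P_2\to P_1)\oplus P_3$, in which the summand dual to $S_2[1]$ is $\Cone(P_2\to P_1)\notin\thick(P_2)$. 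So the membership $\bY_\calK\subset\calP_I$ fails, and $\SEGp(\per\Gamma,\calP_I)$ as you define it is not the image of $\EGp(\Gamma,\Gamma_I)$ under $\mathrm{SP}$. The paper does not characterize the relative silting object by a containment in $\calP_I$ but by the orthogonality condition $\Hom^\bullet(\bY_{\cl R},\calV)=0$ together with $\thick(\bY_{\cl R})={^{\perp_{\per\Gamma}}}\calV$ (this is the content of Lemma~\ref{lem:perpV} and Definition~\ref{def_seg_bullet}); the ``$\calK$-part'' $\bY_\calK$ plays no role in the statement and need not live in any fixed thick subcategory.

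Second, your Steps 2 and 3 are routed through a Verdier quotient functor $q\colon\per\Gamma\to\per(e\Gamma e)$ with $\ker q=\calP_I$, used to push silting objects and mutations downstairs. Such a functor is not available in the paper's framework, and the paper is explicitly noncommittal about it: the localization functor $\pi\colon\calD(\Gamma)\to\calD(e\Gamma e)$ is not shown to preserve perfect objects, and the paper states that ``the relation between $\pvd(e\Gamma e)$ and $\per(e\Gamma e)$ remains unknown to us.'' Even if $\pi$ did land in $\per(e\Gamma e)$, by Thomason's theorem $\per\Gamma/\calP_I$ is only dense in $\per(e\Gamma e)$ (equal up to idempotent completion), so the lifting of mutations and even of objects in your Step 3 has genuine idempotent-completion issues; and the $\Hom$-vanishing for $q\bY$ in (i) cannot be read off the dual-basis relations upstairs, since $q$ is neither full nor faithful. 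The paper sidesteps all of this by going the other way: it uses the fully faithful embedding $\ell\colon\per(e\Gamma e)\hookrightarrow\per\Gamma$ (which \emph{does} preserve perfects, being a compact-preserving left adjoint), identifies $\per(e\Gamma e)$ with ${^{\perp_{\per\Gamma}}}\calV$, and then works entirely inside $\per\Gamma$ with the notion of $\calV$-perpendicular partial silting objects (Lemma~\ref{lem:obv}, Lemma~\ref{le:dual}), where all $\Hom$-computations are honest $\Hom$'s in $\per\Gamma$. The isomorphism on edges then follows by completing a $\calV$-perpendicular partial silting object to a full one, using Proposition~\ref{prop_lift_simple_tilt} and the $\Ext^1$-vanishing to ensure the relevant irreducible maps coincide. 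If you want to repair your argument, you should replace ``contained in $\calP_I$'' by ``orthogonal to $\calV$'' and replace the quotient $q$ by the embedding $\ell$; at that point your proof essentially becomes the paper's.
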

While the bijection between finite hearts and silting objects 
(Proposition \ref{le:dual}) does not require that $(Q,W)$ is dual to a triangulation, 
Theorem \ref{thm_intro3} means that under this additional assumption 
the correspondence is compatible with the notion of tilting 
and silting mutation.

\subsection*{Applications} We started the study of the categories 
associated with the dg algebra $e\Gamma e$ from the perspective of 
weighted decorated marked surfaces. This complementary perspective is 
considered in \cite{BMQS}, where we also compute (possibly part of) the 
space of Bridgeland stability conditions of $\pvd(e\Gamma e)$. 
The same categories and their bounded t-structures also appear 
in the definition of multi-scale stability conditions from 
\cite{BMS_compact}.
\par\medskip 
A weighted decorated marked surface (wDMS) is a bordered Riemann surface $\surf$ with 
a set of marked points $\MM$ on the boundary components and a map 
(weight) $\mathbf w:\Tri\to \Z_{\geq -1}$ defined on a finite set of internal 
points $\Tri\subset {\stackrel{\circ}{\surf}}$. One way of constructing a 
weighted marked surface is by contracting sub-surfaces of a decorated 
marked surface $\surf_{\mathbf 1}$. In this case we denote the resulting 
wDMS by $\colsur$. A mixed-angulation of a weighted marked surface is a 
tiling of $\colsur$ into polygons encircling exactly one point in $\Tri$ 
and whose number of edges is equal to the weight of such a point plus 
two. An edge of a mixed-angulation is a simple curve 
in $\surf\setminus\Tri$ connecting two marked points, up to isotopy.

In \cite{BMQS}, the category $\pvd(e \Gamma e)$ is realized as a 
triangulated 
category associated to a weighted decorated marked surface, 
obtained by \lq\lq collapsing\rq\rq\ subsurfaces of a marked 
bordered Riemann 
surface, together with a choice of a mixed-angulation. More 
precisely, $\Gamma$ comes from $\surf_{\mathbf 1}$ and the idempotent $e$ 
depends on the subsurface.
The exchange graph $\EGb(\pvd(e\Gamma e))$ is related with 
an exchange graph $\EGb(\colsur)$ of mixed-angulations 
of $\colsur$. (A union of connected components of)
the space of Bridgeland stability conditions 
on $\pvd(e\Gamma e)$, denoted $\stab^\bullet(\pvd(e\Gamma e))$, is 
shown to be isomorphic to a moduli space of quadratic 
differentials with high order poles and possibly high order 
zeroes, generalizing the Bridgeland-Smith correspondence 
of \cite{BS15} out of the usual $3$-Calabi-Yau setting.
\par\medskip
Putting together the main categorical result in \cite{BMQS}, 
the Koszul duality above, and the geometric model 
of silting objects in $\per(\Gamma)$ as arcs from \cite{Q3}, 
we obtain the following Theorem,  
which we interpret as a categorification of mixed-angulations of a weighted decorated marked surface.
\begin{theorem}\label{thm:app}
There are isomorphisms of exchange graphs
\[ \SEG^\bullet(\per(e\Gamma e)) \simeq \EGb(\pvd(e\Gamma e)) \simeq 
\EGb(\colsur),\]
where $\colsur$ is a weighted decorated marked surface 
corresponding to $e\Gamma e$.
\end{theorem}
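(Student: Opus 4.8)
The plan is to obtain the chain of isomorphisms by concatenating two results that are, respectively, the Koszul duality for $\pvd(e\Gamma e)$ proved above and the geometric realization of $\pvd(e\Gamma e)$ from \cite{BMQS}. The right-hand isomorphism $\SEGb(\per(e\Gamma e)) \simeq \EGb(\pvd(e\Gamma e))$ is literally the content of Theorem~\ref{thm_intro3} (i.e.\ Theorem~\ref{thm:SPdual}): it sends a localized finite heart $\calH/(\calH\cap\pvd\Gamma_I)$ to the silting object Koszul-dual to the simples of $\calH$ that survive the Verdier quotient, and turns a simple forward HRS-tilt into a silting left mutation at the dual summand, so it is an isomorphism of graphs and not merely a vertex bijection. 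Nothing further is needed for this half.

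For the left-hand isomorphism $\SEGb(\per(e\Gamma e)) \simeq \EGb(\colsur)$ — equivalently, through the previous step, $\EGb(\pvd(e\Gamma e)) \simeq \EGb(\colsur)$ — I would invoke the main categorical theorem of \cite{BMQS}. With $\Gamma=\Gamma(Q,W)$ for $(Q,W)$ the quiver with potential dual to a triangulation of the decorated marked surface $\surf_{\mathbf 1}$ and $e=\sum_{i\notin I}e_i$, that result identifies $\pvd(e\Gamma e)$ with the triangulated category attached to the weighted decorated marked surface $\colsur$ obtained by collapsing the subsurface of $\surf_{\mathbf 1}$ prescribed by $I$, and under this identification the hearts of localized type correspond bijectively to (the duals of) mixed-angulations of $\colsur$, with simple forward tilts matching the elementary flips that generate the edges of $\EGb(\colsur)$. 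Transporting the graph structure along this equivalence gives the isomorphism, and composing with the first step yields the statement.

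The step requiring genuine care, and the one I expect to be the main obstacle, is the matching of the three $\bullet$-decorations appearing on the three graphs. One has to verify that a silting object of $\per(e\Gamma e)$ belongs to $\SEGb(\per(e\Gamma e))$ as defined in Definition~\ref{def_seg_bullet} precisely when its Koszul-dual heart is a localization of a reachable finite heart of $\pvd(\Gamma)$ in the sense of Theorem~\ref{thm_intro1}, and precisely when the corresponding mixed-angulation of $\colsur$ is reachable by flips from the initial one. Because $\EG(e\Gamma e)$ may a priori have spurious components (as flagged after Theorem~\ref{thm_intro1}), the isomorphisms are claimed only between these distinguished pieces, so one must also check that $\EGb(\colsur)$ is connected and is carried onto exactly the categorically defined component. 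Granting Theorems~\ref{thm_intro1}--\ref{thm_intro3} together with the surface-theoretic results of \cite{BMQS}, this should amount to pinning down the initial objects on each side (the standard heart of $\pvd(\Gamma)$ and its localization, the silting object $e\Gamma e$, and the initial mixed-angulation) and chasing labels through the two equivalences; the real subtlety is that none of the three reachability conditions is manifestly intrinsic, so the equivalences of \cite{BMQS} and of Theorem~\ref{thm_intro3} must be shown to intertwine them and not just the ambient categories.
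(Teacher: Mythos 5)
Your proposal matches the paper's (implicit) proof exactly: the right-hand isomorphism is Theorem~\ref{thm:SPdual}, the left-hand one is the isomorphism $\EGb(\pvd(e\Gamma e))\simeq\EGb(\colsur)$ cited from \cite{BMQS} and recalled at the end of Section~\ref{subsec_collapse}, and the theorem follows by composing the two. One small correction to the worries you flag: there is no need to check that $\EGb(\colsur)$ is connected---indeed it need not be (see the Remark after the $A_3$ example pointing to \cite[Section~5.5]{BMQS})---and the matching of the $\bullet$-decorations between $\SEGb(\per(e\Gamma e))$ and $\EGb(\pvd(e\Gamma e))$ is not a separate verification but is built into Definition~\ref{def_seg_bullet} via Lemmas~\ref{lem:obv} and~\ref{le:dual}, just as $\EGb(\colsur)$ is set up in \cite{BMQS} to correspond to $\EGb(\pvd(e\Gamma e))$.
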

In terms of arcs-objects correspondence, the geometric idea 
behind Theorem \ref{thm:app} 
is the following. Mixed-angulations of $\colsur$ can be deformed to 
partial triangulations of $\surf_{\mathbf 1}$. Arcs in triangulations 
of $\surf_{\mathbf 1}$ correspond to 
indecomposable summands of silting objects of $\per(\Gamma)$. 
Partial triangulations therefore corresponds to those partial 
silting objects of $\Gamma$, which are precisely the silting 
objects in $\SEGb(\per(e\Gamma e))$.

\subsection*{Acknowledgment} 
We would like to thank Lidia Angeleri, 
David Pauksztello, Jorge Vit{\'o}ria, Nicholas Williams, Dong Yang 
for useful discussions, Martin M{\"o}ller and Jeong-hoon So for a fruitful collaboration preceeding this article, and an anonymous referee for helping improving the exposition.

\section*{List of notation}\label{list_of_notation}\subsubsection*{Quivers and algebras}
\begin{compacthang}
    \item $\Lambda$\quad any differential graded (dg) algebra
\item $(Q,W)$ \quad a quiver with potential (Section \ref{subsec_qp})
\item $\Gamma=\Gamma(Q,W)$  differential
    non-positively graded Ginzburg algebra associated with 
    a quiver with potential $(Q,W)$ (Sections \ref{subsec_qp}, \ref{subset_ginzburg})
\item $J=H^0\Gamma$ \quad Jacobian algebra
\item  $I\sqcup I^c = Q_0$ \quad complementary subsets of the 
    set $Q_0$ of vertices of $Q$
\item $e=\sum_{j\not\in I}e_j$ \quad idempotent in $\Gamma$ or $J$
\item $(Q_I,W_I)$ \quad full subquiver of $(Q,W)$ with set of vertices $I$, not necessarily connected
\item $\Gamma_I:=\Gamma(Q_I,W_I)$ and $J_I:=H^0\Gamma_I$
\item $Q^{e J e}$ and $\ol{Q}^{e\Gamma e}$ \quad quivers of 
the algebras $e J e$ and $e\Gamma e$ respectively (Section \ref{sec_loc_pvd})
\end{compacthang}
\subsubsection*{Categories}
\begin{compacthang}
    \item $\cl D(\Lambda)$ \quad derived category of a dg algebra $\Lambda$
\item $\per\Lambda$ \quad perfect derived category of a 
    dg algebra $\Lambda$ (Section \ref{subset_ginzburg})
\item $\pvd\Lambda$ \quad perfectly valued derived category of a 
    dg algebra $\Lambda$ (Section \ref{subset_ginzburg})
\item $\modules(e J e) \simeq \modules J / \modules J_I$ \quad Serre quotient of abelian categories  (Section \ref{sec_loc_pvd})
\item $\pvd(e\Gamma e) \simeq \pvd\Gamma / \pvd \Gamma_I$ \quad Verdier quotient of triangulated categories  (Section \ref{sec_loc_pvd})
\item A \emph{finite heart} $\cl H$ is a heart of 
a bounded t-structure which is finite length and has finitely 
many simples up to isomorphism, $\Sim\cl H$  
(Section \ref{subsec_notation})
\item Fix a thick subcategory $\cl V$ of a triangulated 
category $\cl D$ and the Verdier quotient
\[\cl V \stackrel{\iota}{\longrightarrow}
\cl D \stackrel{\pi}{\longrightarrow} \cl D/ \cl V\]
\item A bounded t-structure in $\cl D$ is \emph{compatible with $\cl V$} if 
$\pi$ induces a bounded t-structure on $\cl D/\cl V$ (Definition \ref{def_induced_heart})
\item $\ol{\cl H}:= \pi(\cl H) \simeq \cl H / (\cl H\cap \cl V)$ 
for a $\cl V$-compatible heart $\cl H$ (Definition \ref{def_induced_heart})
\item $\ol{M}:=\pi(M)$ for an object $M\in \cl D$ (Notation \ref{notation_ess_im})
\item In Sections \ref{sec_tstructures} and \ref{sec_duality}\\ $\cl A=\modules J$, 
$\cl V= \pvd\Gamma_I$, $\cl K = \modules J_I= \modules J \cap \cl V$
\end{compacthang}
\subsubsection*{Exchange graphs}
\begin{compacthang}
\item $|G|$ \quad set of vertices of a graph $G$
\item $\EG(\pvd \Lambda)$ \quad exchange graph of bounded hearts 
    in $\pvd \Lambda$ (Definition \ref{def_EG1})
\item $\EGp(\pvd \Gamma)$ \quad the principal part 
of $\EG(\pvd \Gamma)$, i.e., 
    the connected component containing $\modules J$ as a 
    vertex. It only contains finite hearts 
    (Definition \ref{def_EG1} and Notation \ref{EG_Gamma})
\item $\EGp(\pvd \Gamma, \pvd \Gamma_I)$ \quad the full subgraph 
of $\EGp(\pvd \Gamma)$ whose vertices are hearts compatible 
with $\pvd \Gamma_I$ (Definition \ref{def_EG})
\item $\EGb(\pvd(e\Gamma e))$ \quad the full subgraph 
of $\EG(\pvd(e\Gamma e))$ 
whose vertices are hearts of type $\ol{\cl H}$ for some 
$\cl H\in\EGp(\pvd \Gamma)$ (Definition \ref{def_EG} and Notation \ref{EG_Gamma})
\item $|\EGb_{\cl K}(\pvd \Gamma)|$ \quad set of 
hearts $\cl H$ in 
$\EGp(\pvd \Gamma, \pvd \Gamma_I)$ satisfying 
$\ol{\cl H}\in\EGb(\pvd(e\Gamma e))$ and $\cl H\supset \cl K$ (Equation \ref{Jin_map})
\item $\SEG(\per \Lambda)$ \quad silting exchange graph 
of $\per \Lambda$ (Definition \ref{def_SEG})
\item $\SEGp(\per \Gamma)$ \quad principal part 
of $\SEG(\per \Gamma)$ 
containing $\Gamma$ as a vertex, it is isomorphic to 
$\EGp(\pvd\Gamma)$ under simple-projective duality 
for $\Gamma$ (Definition \ref{def_SEG})
\item $\pSEGV(\per \Gamma)$ \quad exchange graph of $\cl V$-perpendicular 
partial silting objects in $\per \Gamma$ (Definition~\ref{def_pSEG})
\item $\pSEGVb(\per \Gamma)$ \quad principal part 
of $\pSEGV(\per\Gamma)$ consisting on partial silting objects 
that can be completed to silting objects in~$\SEGp(\per \Gamma)$ (Definition~\ref{def_seg_bullet})
\item $\SEGb(\per (e\Gamma e))$ \quad silting exchange graph 
of $\per(e\Gamma e)$ of silting objects corresponding to 
partial silting objects in~$\pSEGVb(\per \Gamma)$ 
(Definition~\ref{def_seg_bullet})

\end{compacthang}

\section{Preliminaries on categories and notation}
\label{sec_prelim}

\subsection{Categories and t-structures: notation}\label{subsec_notation}

We fix the notations and some basic facts regarding abelian and triangulated 
categories, their bounded t-structures, and the tilting procedure.
\par\smallskip
Through the paper, $\acf$ is an algebraically closed field, all categories 
are $\acf$-linear, and all subcategories are full. The shift functor of a 
triangulated category is denoted by $[1]$, and distinguished triangles are 
represented as $X\to E\to Y$, likewise short exact sequences. For an 
additive (either abelian or triangulated) category $\calC$ with 
subcategories $\calH_1,\calH_2$ and for a set of objects $\cl B$, we define
\begin{align*}
    \bra \cl B\ket&:=\{M\in\calC\mid \exists\ T,F\in\mathrm{Add}\, \cl B \text{ s.t. } T\to M\to F\},~\mbox{and} \\
    \calH_1*\calH_2&:=\{M\in\calC\mid \exists\ T\in\calH_1, F\in\calH_2 \text{ s.t. }T\to M\to F \}.
\end{align*}
If $\calH_1,\calH_2$ satisfy $\Hom(\calH_1,\calH_2)=0$, then we write
$\calH_1\perp\calH_2$ for $\calH_1*\calH_2$. If 
$\calH_1\perp\calH_2 = \calC$, the pair $(\calH_1,\calH_2)$ is called a torsion pair. 
We denote moreover by $\calB^{\perp_\calC}$ and ${^{\perp_\calC}}\calB$ the 
right and left orthogonal to $\calB$ in $\cl C$, omitting the subscript when 
there is no confusion. More precisely,
\[
    \calB^{\perp_\calC}\,:=\,\left\{C\in\calC:\Hom_\calC(B,C)=0,\ \forall\,B\in \calB\right\},
\]
and ${^{\perp_\calC}}\calB$ is defined similarly.
If $\calC$ is triangulated, we denote by $\thick(\cl B)$ the smallest
thick additive full subcategory in $\calC$ containing $\cB$. 
It is a triangulated subcategory.
\begin{rmk*}
A finite length abelian category will be said to be \emph{finite} if it has finitely
many simple objects, up to isomorphism. The set of isomorphism classes 
of simples in an 
abelian category $\cl A$ will be denoted $\Sim\cl A$.
\end{rmk*}
A subcategory $\cl S$ of an abelian category $\cA$ is called
a \emph{Serre subcategory} if it is abelian and for any short exact sequence
\[
0 \to A_1 \to E \to A_2 \to 0\]
in $\cA$, we have that $E \in \cl S$ if and only if $A_1,A_2 \in \cl S$. 
\\
Serre subcategories of a finite lenght abelian category $\calA$ are in 
bijection with subsets of the set $\Sim\calA$. 
\par\medskip
A \emph{t-structure} on a triangulated category $\calD$ is
the torsion part $\cl P$ of a torsion pair 
(so that $\calD=\cl P \perp \cl P^\perp$) satisfying $\cl P[1]\subset 
\cl P$. The $t$-structure is said to be \emph{bounded} 
if $\calD=\cup_{m\in\Z}\cl P[m]\cap\cl P^\perp[-m]$.
The \emph{heart} of a t-structure $\cl P\subset\calD$ is the full 
subcategory $\cl H=\cl P\cap\cl P^\perp[1]$, which is abelian. 
If $\cl H\subset\calD$ is the heart of a bounded t-structure, we call 
it a \emph{(bounded) heart}.
The bounded t-structure and its heart determine each other uniquely 
and hence we will use them interchangeably. The cohomology 
functor $\calD\to \cl H$ is denoted by $H^0_{\cl H}$ and the subscript 
is omitted if there is no possibility of confusion. As standard notation, 
we write $H^n_{\cl H}$ for $H^0_{\cl H}\circ[n]$.
\par
If the heart of a bounded t-structure is a finite length abelian category 
with finitely many simples, we call it a \emph{finite heart}. 
\par\medskip
Given a torsion pair $(\calT,\calF)$ in the abelian heart of a bounded t-structure $\cl H= \calT\perp\calF$,
there is a new heart $\mu_{\torsionfree}^\sharp\h:=\torsionfree[1] \perp_\calD \torsion$. The operation of creating the new heart is 
known as Happel-Reiten-Smal\o\ (HRS) tilt or 
\emph{forward tilt with respect to $(\torsion,\torsionfree)$} of $\h$,
see e.g.~\cite{tiltingbook}. 
The inverse operation is the backward tilt with respect to the torsion 
class; the new heart is denoted by $\mu^\flat_\torsion\h:=
\torsionfree\perp_\calD\torsion[-1]$.
\par
A forward tilt $\h\to\mu_{\torsionfree}^{\sharp}\h$ (resp., backward 
tilt $\h\to\mu_{\torsion}^{\flat}\h$) is \emph{simple} if the corresponding 
torsion free class $\torsionfree$ (resp., torsion class $\torsion$) is 
generated by a simple $S$ of $\h$, i.e. $\torsionfree=\<S\>$ (resp., $\torsion=\bra S\ket$).
For a finite heart~$\h$ with a rigid simple~$S$, i.e., 
such that~$\Ext^1(S,S)=0$, the simple forward tilt with respect
to~$S$ exists and is denoted by $\mu_{S}^{\sharp}\h$. It is finite too. 
While in general it is not true that the forward tilt of a finite heart 
is finite, if a torsion-free class contains finitely many indecomposables, 
then tilting at $\torsionfree$ is equivalent to perform a finite sequence 
of simple tilts and the new heart is finite.
\par\medskip
We recall general definitions about 
graphs and we define two  
graphs encoding information about t-structures of a 
triangulated category.
\begin{definition}
    Let $\operatorname{G}=(V,E)$ be a (oriented) graph with 
    set of vertices $V$ and (directed) edges $E$. A \emph{subgraph} 
    of $\operatorname{G}$ consists of $(V',E')$ 
    where $V'\subseteq V$ and $E'\subseteq E$ such that the endpoints of edges in $E'$ connect vertices in $V'$. 
    Given $V'\subset V$, the subgraph $(V',E')$ is \emph{full} 
    if $E'\subset E$ is the maximal subset of edges 
    connecting vertices in $V'$;  it is 
    \emph{connected} if, given any two vertices, there is a path 
    connecting them. 
\end{definition}
\begin{definition}\label{def_EG1} Let $\calD$ be a triangulated category and $\calA$ a fixed distinguished finite bounded heart of $\calD$.
    \begin{itemize}
\item The \emph{exchange graph} $\EG(\calD)$ is the oriented graph whose vertices are all hearts in $\calD$
and whose directed edges correspond to simple forward tilt between 
them. 
\item The \emph{principal component} $\EGp(\calD)$ is the 
\emph{connected component} of the exchange graph 
$\EG(\calD)$ containing the distinguished heart $\cl A$.
\end{itemize}
\end{definition}

\subsection{Quivers with potential}\label{subsec_qp}
Recall that a quiver with potential $(Q,W)$ is defined by a set of 
vertices $Q_0$, a set of arrows~$\alpha:i\to j\in Q_1$, the source and 
target functions $s(\alpha)=i$, $t(\alpha)=j$, and a formal sum of 
cycles~$W$ in the completion $\widehat{\mathbf{k} Q}$ of the path algebra of $Q$ with 
respect to the bilateral ideal generated by the 
arrows. We refer to \cite{DWZ} for basic 
notions. We assume that a quiver $Q$ has no loops nor 2-cycles. 
For any $k\in Q_0$, we denote by $\mu_k$ the operation of mutation 
of $(Q,W)$ at the vertex $k$, producing another quiver with 
potential $(Q',W')$ with no loops and no 2-cycles in the following 
way. The set of vertices $Q_0'$ is the same as $Q_0$, 
arrows $\alpha$ incoming from or outgoing to the vertex $k$ are 
replaced by their opposite $\alpha^*$, and for any pair of 
consecutive arrows $\alpha:i\to k$ and $\beta:k\to j$ passing 
through $k$, an arrow $[\alpha\beta]:i\to j$ is added. The 
potential $W'$ is defined 
as $\widetilde{W}+\sum_{\alpha,\beta}[\alpha\beta]\beta^*\alpha^*$, 
where $\widetilde{W}$ is obtained from $W$ by replacing any 
composition $\alpha\beta:i\to k\to j$ by $[\alpha\beta]$. We say 
that $(Q,W)$ is non-degenerate if moreover any finite sequence of 
mutations produces a quiver with no loops nor 2-cycles.
\par
We say that a quiver is \emph{finite} if it has finitely many vertices 
and finitely many arrows.
\par\medskip
The Jacobian algebra of $(Q,W)$ is the quotient $J(Q,W)$ of the 
complete path algebra $\widehat{\acf Q}$ by the bilateral
ideal $\del W=\bra\del_\alpha W\mid \alpha\in Q_1\ket$, 
where $\del_\alpha W$ is obtained by cyclically permuting $W$ in a 
way that $\alpha$ is in the first position in each word, and then 
deleting $\alpha$ and all words not containing $\alpha$.
\par \medskip
We make the assumption on $(Q,W)$ that it is non-degenerate and that $J(Q,W)$ is finite dimensional.
\par\medskip
For a fixed finite non-degenerate quiver with potential $(Q,W)$, we denote by $\Gamma(Q,W)$ the complete Ginzburg differential graded (dg) algebra of Calabi-Yau dimension 3. It is the dg path algebra of a graded quiver $\ol{Q}$ obtained from $(Q,W)$ in the following way
\begin{itemize}
    \item $\ol{Q}_0=Q_0$,
    \item if $\alpha\in Q_1$, then $\alpha \in \ol{Q}_1$ in degree $0$,
    \item for any $\alpha\in Q_1$, there is $\alpha^\vee \in \ol{Q}_1$ in degree $-1$,
    \item for any $k\in\ol{Q}_0$, there is a loop $\ell_k:k\to k\in \ol{Q}_1$ in degree $-2$. 
\end{itemize}
The differential $d$ is the unique continuous linear endomorphism, homogeneous of degree 1,
which satisfies the Leibniz rule and takes values $d\alpha^\vee=\del_\alpha W$, $d\ell_k=\sum_{\alpha\in Q_1}\ell_k[\alpha,\alpha^\vee]\ell_k$.
We refer to \cite{KY,keller_dilog} for more details. 
The zero-th homology of this dg algebra is the Jacobian algebra 
of the original quiver with potential: $H^0\Gamma(Q,W)\simeq J(Q,W)$. 

\subsection{Localization of categories and hearts} \label{subsec_quotients}

We recall the notions of quotient of an abelian category by a Serre subcategory and of the Verdier quotient of a triangulated category, and give some definitions concerning localization of bounded t-structures. References for these notions are Neeman's book \cite{neeman}, in particular Appendix A and Chapter 2, and \cite{antieau}.
\par
Given a Serre subcategory $\cl S$ of an abelian category $\mathcal{H}$, one can construct the quotient $\mathcal{H}/\cl S$, together with a projection functor $\pi :\mathcal{H}\to \mathcal{H}/\mathcal{S}$ defined in the following way.
\begin{definition} If $\cl S$ is a Serre subcategory of an abelian category $\mathcal{H}$, $\mathcal{H}/\mathcal{S}$ is a category with the same set of objects as $\mathcal{H}$ and where,
 for all $X,Y\in Obj\left(\mathcal{H/S}\right)$, a morphism in $\Hom_{\mathcal{H/S}}(X,Y)$ is an equivalence class of roofs $(\tilde\eta,\eta)$ of the form
\[
	X \stackrel{\tilde\eta}{\leftarrow} Z \stackrel{\eta}{\rightarrow} Y\]
for some $Z\in Obj(\mathcal{H/S})$, $\tilde\eta\in\Hom_{\mathcal{H}}(Z,X)$, $\eta\in\Hom_{\mathcal{H}}(Z,Y)$, with $\ker\tilde\eta\in\mathcal{S}\ni \coker\tilde\eta$, and with obvious composition law
\[\xymatrix@R=1pt{&& T \ar[ld]\ar[rd]\\
 & Z \ar[dl]\ar[dr] &  & U \ar[dl]\ar[dr] &  \\
X &   & Y &   & W\ .
}
\]
(Here we say that two roofs are equivalent, if they are dominated by a third
roof.) The projection $\pi:\mathcal{H}\to \mathcal{H}/\cl S$ is a functor whose kernel
is~$\mathcal{S}$, sending objects identically in themselves and morphisms
\[\Hom_{\mathcal{H}}(A,B)\ni f \mapsto (id,f)\in \Hom_{\mathcal{H/S}}(A,B).\]
\end{definition}
\begin{lemma}[{\cite[{Lemma A.2.3}]{neeman}, \cite{gabriel}}] The category $\mathcal{H}/\cl S$ is an abelian category. The functor $\pi$ is exact, and takes the objects of $\cl S$ to objects in $\mathcal{H}/\cl S$ isomorphic to zero. Furthermore, $\pi$ is universal with this property, and the subcategory~$\cl S$ is the full subcategory of objects of $\mathcal{H}$ whose images under $\pi$ are isomorphic to zero.
\end{lemma}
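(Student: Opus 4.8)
The statement is the classical Gabriel--Serre quotient theorem, and the plan is to realise $\mathcal{H}/\cl S$ as the localization $\mathcal{H}[\Sigma^{-1}]$ at the class
\[
\Sigma=\{f\in\mathcal{H}\mid \ker f\in\cl S\ni\coker f\}
\]
of morphisms that $\pi$ is forced to invert, and then to transport the abelian structure along $\pi$. First I would check that $\Sigma$ is a multiplicative system admitting a calculus of left and right fractions. Closure under composition and the two-out-of-three property follow from the snake lemma together with the defining property of a Serre subcategory (closure under subobjects, quotients, and extensions): for $f,g\in\Sigma$ the objects $\ker(gf)$ and $\coker(gf)$ sit in short exact sequences assembled from $\ker f,\ker g,\coker f,\coker g$, hence lie in $\cl S$. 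The Ore conditions — completing $X'\xrightarrow{s}X\xleftarrow{f}Y$ with $s\in\Sigma$ to a commuting square whose new vertical arrow is again in $\Sigma$, and dually — are obtained by forming the pullback (resp.\ pushout) in $\mathcal{H}$ and noting that the comparison kernels and cokernels are subquotients of $\ker s$ and $\coker s$; the cancellation axiom is analogous. This makes $\mathcal{H}/\cl S:=\mathcal{H}[\Sigma^{-1}]$ a category whose $\Hom$-sets are the equivalence classes of roofs as above, with the stated composition, and equips it with the projection $\pi$.

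Next I would establish additivity. Biproducts descend, since $\pi(A\oplus B)$ is a biproduct of $\pi(A),\pi(B)$ because $\pi$ carries the (co)diagonal and (co)projection morphisms to a compatible system; addition of two morphisms $\pi(X)\to\pi(Y)$ is defined by bringing their roofs over a common denominator via the Ore conditions and adding numerators, with independence of choices a routine check. So $\mathcal{H}/\cl S$ is additive and $\pi$ additive.

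The technical core is to show that $\pi$ preserves kernels and cokernels. Given $f\colon A\to B$ in $\mathcal{H}$ with kernel $k\colon K\to A$, one checks that $\pi(k)$ is a kernel of $\pi(f)$: a morphism $\pi(X)\to\pi(A)$ annihilated by $\pi(f)$ is represented by a roof $X\xleftarrow{s}Z\xrightarrow{u}A$ with $s\in\Sigma$, and the vanishing condition means, after replacing $Z$ by a suitable object mapping to it in $\Sigma$, that $fu=0$, so $u$ factors uniquely through $k$ and the factorization descends to $\mathcal{H}/\cl S$; dually $\pi$ preserves cokernels. Since every morphism of $\mathcal{H}/\cl S$ is of the form $\pi(f)\circ\pi(s)^{-1}$ with $\pi(s)$ invertible, all morphisms admit kernels and cokernels, computed by applying $\pi$ to kernels and cokernels upstairs. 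To reach abelianness I would verify that every monomorphism is a kernel and every epimorphism a cokernel: if $\pi(f)$ is monic then $\ker f\in\cl S$, so $\pi(f)$ agrees, up to the isomorphism $\pi(A)\cong\pi(\operatorname{im}f)$, with $\pi$ of the monomorphism $\operatorname{im}f\hookrightarrow B$, which in $\mathcal{H}$ is the kernel of its cokernel and is preserved by $\pi$; the epimorphism case is dual. Hence $\mathcal{H}/\cl S$ is abelian and $\pi$ is exact, and for $X\in\cl S$ the morphism $0\to X$ lies in $\Sigma$, so $\pi(X)\cong 0$.

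Finally, if $\pi(X)\cong 0$ then $\operatorname{id}_X$ becomes the zero morphism of $\mathcal{H}/\cl S$, and unravelling the equivalence of roofs produces a subobject $X'\subseteq X$ with $X'\in\cl S$ and $X/X'\in\cl S$, so $X\in\cl S$ by extension-closedness; together with the previous step this identifies $\cl S$ as exactly the full subcategory of objects killed by $\pi$. For universality, let $F\colon\mathcal{H}\to\mathcal{B}$ be exact with $F(\cl S)=0$; then for $f\in\Sigma$ exactness and $F(\ker f)=F(\coker f)=0$ force $F(f)$ to be an isomorphism, so $F$ factors uniquely through $\pi$ by the universal property of $\mathcal{H}[\Sigma^{-1}]$, and the induced functor is exact because $\pi$ is exact and every morphism downstairs lifts to a fraction. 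The main obstacle is the first step — verifying the Ore and cancellation conditions for $\Sigma$, which is precisely where the full Serre property of $\cl S$ (stability under sub- and quotient objects and under extensions, not merely abelianness) is indispensable — together with the kernel/cokernel computation in the third step; once $\pi$ is known to be exact with the expected (co)kernels, the remaining axioms and the universal property are formal.
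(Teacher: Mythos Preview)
The paper does not prove this lemma: it is stated with a citation to \cite[Lemma~A.2.3]{neeman} and \cite{gabriel} and is used as a black box, so there is no proof in the paper to compare against. Your argument is the standard Gabriel--Serre localization proof and is correct in outline; the only places requiring care are exactly the ones you flag (the Ore/cancellation conditions for $\Sigma$ and the verification that $\pi$ preserves kernels and cokernels), and your sketches of those steps are sound.
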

Roughly speaking, this means that all morphisms in $\cl S$ have become invertible, and objects in $\cl S$ can be treated as zero objects, and that we have a short exact sequence
\[0 \to \cl S \stackrel{\iota}{\rightarrow} \mathcal{H} \stackrel{\pi}{\rightarrow}
\mathcal{H}/\cl S \to 0,\]
with $\iota$ and $\pi$ exact functors of abelian categories.
\par
\medskip
\paragraph{\textbf{Verdier (triangulated) localization}}
Given a triangulated category $\calD$ and a triangulated subcategory
$\calV\hookrightarrow \calD$, we can construct the so-called
\emph{Verdier quotient}
$\calD/\calV$ with a procedure similar to that discussed for abelian
categories. The category $\calD/\calV$ has the same objects as $\calD$ and
a morphism between two objects $R$ and $S$ in $\calD/\calV$ is an
equivalence class of roofs
\[R\stackrel{\tilde f}{\leftarrow} T \stackrel{f}{\rightarrow} S
\]
with $\tilde f$ a morphism in $\calD$ fitting in a distinguished triangle
$T \stackrel{\tilde f}{\rightarrow} R \rightarrow V$, with $V$ in $\calV$.
There is a natural triangulated functor $\pi:\calD\to \calD/\calV$, called
the \emph{Verdier localization}. If the triangulated subcategory $\calV$ is
thick, that is it contains all direct summands of its objects, then
\[0 \to \calV \stackrel{\iota}{\to} \calD \stackrel{\pi}{\to} \calD/\calV \to 0
\]
is a short exact sequence of categories with exact
functors (\cite[Proposition~2.3.1]{V}).

\begin{rmk*}\label{notation_ess_im} Whenever $\pi:\calD\to \calD/\cl V$ is a quotient 
    functor of triangulated categories, and $\cl B$ is a subcategory 
    of $\calD$, by $\pi(\cl B)$ we will mean the \emph{essential image} 
    of $\cl B$ through~$\pi$. This will apply in particular to the 
    image of an abelian heart $\h\subset\calD$.
\end{rmk*}
\begin{rmk*} To avoid confusion, if $M\in \calD$ is an object, we 
will denote by $\ol{M}$ the same object regarded in the quotient 
category $\calD/\calV$.
\end{rmk*}

We recall a general fact about bounded t-structures under localization.
\par
\begin{prop}[{\cite[Proposition~2.20]{antieau}}]\label{prop_antieau}
Let $\iota:\cl C \to \cl D$ be a t-exact fully faithful functor of triangulated
 categories equipped with bounded t-structures,
with a well-defined quotient functor $\pi:\cl D \to \cl D/ \cl C$.
Let $\h_\D$ and $\h_\calC$ be the two hearts in $\D$ and $\calC$ respectively.
Then the following are equivalent
	\begin{itemize}
		\item[a)]the essential image $\iota(\h_\calC)\subset \h_\D$ is a Serre subcategory, and
		\item[b)] the quotient $\calD/\cl C$ has a bounded t-structure such that $\pi$ is t-exact, whose
		heart is equivalent to $\h_\D/\h_\calC$.
	\end{itemize}
\end{prop}
The (bounded) t-structure corresponding to
$\h_\D/\h_\calC$ in $\calD/\cl C$ of point b) is described in
{\cite[Proposition~2.20]{antieau}}.
\par\medskip
Suppose $\cl V$ is a full thick triangulated subcategory of a triangulated 
category $\calD$, and let $\calH$ be a bounded heart of $\calD$.
\begin{definition}[{\cite[Definition 2.3]{BMS_compact}}]\label{def_induced_heart} 
We say that $\calH$ is \emph{compatible} with $\cl V$ 
if $\calH\cap \cl V$ is a bounded heart 
of $\cl V$ making the inclusion of $\cl V$ in $\calD$ t-exact, and it is a Serre full subcategory of $\calH$. 
\par
If a heart in $\calD/\cl V$ arises as in Proposition \ref{prop_antieau},
we say that it is \emph{induced} by the heart $\cl H_\calD$, 
and we call it a 
heart \emph{of quotient type}. It will be denoted by $\ol{\cl H}$ whenever 
it is equivalent to $\cl H/(\cl H\cap \calV)$.
\end{definition}
\par
We introduce moreover the following notation, based on Definition \ref{def_EG1}:
\begin{definition}\label{def_EG} Fix a distinguished bounded t-structure of $\calD$ with finite heart $\cl A$ 
compatible with $\calV$.
\begin{itemize}
\item $\EGp(\calD,\cl V)$ is the full subgraph of $\EGp(\calD)$ 
whose vertices are finite hearts of bounded t-structures in 
$\calD$ that are compatible with $\cl V$;
\item $\EG^\bullet(\calD/\calV)$ is the full subgraph of 
$\EG(\calD/\cl V)$ whose vertices can be 
realized as quotients of hearts $\cl H\in\EGp(\calD)$.
\end{itemize}
\end{definition}
\par
The next lemma is easy to understand, and is proven 
in \cite[Proposition 6.7]{BMQS} for the dual notion of HRS 
backward tilt (at a torsion class).
\begin{lem}\label{quot_tilted_hearts}
  Suppose that $\calH,\calH'$ are $\cl V$-compatible hearts of~$\calD$. Then 
	\[\cl H/(\cl H\cap\calV) \simeq \cl H'/(\cl H'\cap \calV)\subset \calD/\calV,\]
if $\calH'=\mu^\sharp_{\torsionfree}\calH$ at some torsion free class $\torsionfree\subset \calH$ such that $\torsionfree\subset \calV$.
\end{lem}
We end by noting that if $\cl H_{\cl C}$ and $\cl H_{\calD}$ are finite hearts as in Proposition \ref{prop_antieau}, then $\iota\torsionfree$ is a torsion-free class in $\cl H_{\cl D}$ if and only if $\torsionfree$ is a torsion-free class in $\cl H_{\cl C}$. 
\par

\subsection{Recollements of triangulated categories and hearts}\label{subset_recoll}
If the inclusion and quotient functor $\iota$ and $\pi$ are exact and admit left and right adjoints, we are in a \emph{recollement} situation. Let $\cl A, \cl B, \cl C$ be either abelian or triangulated categories. 
\begin{definition}
    A recollement $R(\cl A, \cl B, \cl C)$ is a diagram
    \begin{equation}\label{recollement1}
    \xymatrix{
    0 \ar[r] & \calA \ar[r]^{\iota} & \cl B \ar[r]^{\pi} \ar@/_1pc/[l]_{q} \ar@/^1pc/[l]_{p} & \cl C \ar[r] \ar@/_1pc/[l]_{\ell} \ar@/^1pc/[l]_{r} &0
    }
    \end{equation}
such that $(\ell,\pi,r)$ and $(q,\iota,p)$ are adjoint triples, and, moreover, the functors $\iota, \ell, r$ are fully faithful with $\Im\iota=\ker\pi$.
\par
We write $R_{ab}(\cl A, \cl B, \cl C)$ for a recollement of abelian categories and $R_{tr}(\cl A, \cl B, \cl C)$ for a recollement of triangulated categories. 
\end{definition}
A recollement $R_{ab}(\cl H', \cl H, \cl H'')$ is a recollement of hearts $R_{\heart}(\cl H', \cl H, \cl H'')$ if it is induced by a t-exact recollement of triangulated categories $R_{tr}(\calD', \calD, \calD'')$ by composing with the cohomology functors and the inclusions, in the sense that it fits into a commutative diagram
\begin{equation}\label{recoll_hearts1}
    \xymatrix@C=2cm{\calD' \ar[r]^{\iota} \ar[dd]^{H^0_{\cl H'}}
    & \calD \ar[r]^{\pi}\ar@/^1pc/[l]_{p}\ar@/_1pc/[l]_{q} \ar[dd]^{H^0_{\cl H}}
    &  \calD''\ar@/^1pc/[l]_{r}\ar@/_1pc/[l]_{\ell} \ar[dd]^{H^0_{\cl H''}}\\ \\
    \cl H' \ar[r]^{\iota'}\ar@/^1pc/[uu]^{\epsilon'} & \cl H \ar[r]^{\pi'}\ar@/^1pc/[l]_{p'}\ar@/_1pc/[l]_{q'} \ar@/^1pc/[uu]^{\epsilon} &  \cl H'' \ar@/^1pc/[l]_{r'}\ar@/_1pc/[l]_{\ell'} \ar@/^1pc/[uu]^{\epsilon''},
    }
    \end{equation}
where the $\epsilon$'s map denote the inclusions and the $H^0$'s maps denote the cohomology functors defined by the bounded t-structures. In a recollement of hearts,
\[\begin{aligned}
    q'&=H^0_{\cl H'}\circ q \circ \epsilon & \ell'&=H^0_{\cl H}\circ \ell \circ \epsilon'' \\
    \iota'&=H^0_{\cl H}\circ \iota \circ \epsilon' & \pi'&=H^0_{\cl H''}\circ \pi \circ \epsilon \\
    p'&=H^0_{\cl H'}\circ p \circ \epsilon & r'&=H^0_{\cl H}\circ r \circ \epsilon''.
\end{aligned}
\]

The notions of compatible and glued torsion pairs defined
below are taken from \cite{psarou}. They ensure that a recollement of hearts from a recollement of triangulated categories induces another recollement of hearts by tilting. Proposition \ref{glue_torsion_psa} characterizes torsion pairs
whose images
under the localization map $\pi$ are still torsion pairs.
\begin{definition}\label{def_compatible_torsion} Let $R_{ab}(\cl A', \cl A, \cl A'')$
be the recollement of abelian categories
\begin{equation}\label{abrecoll}\xymatrix@C=2cm{
    \cl A' \ar[r]^{\iota} & \cl A \ar[r]^{\pi}\ar@/^1pc/[l]_{p}\ar@/_1pc/[l]_{q}  &  \cl A'' \ar@/^1pc/[l]_{r}\ar@/_1pc/[l]_{\ell}.
}\end{equation}
A torsion pair $(\cl X, \cl Y)$ in $\cl A$ is said to be 
\emph{compatible} with $R_{ab}(\cl A', \cl A, \cl A'')$ if 
$\ell \pi(\cl X)\subset \cl X$ or $r\pi(\cl Y)\subset \cl Y$.
\end{definition}
\begin{prop}[{\cite[Proposition 9.1]{psarou}}]\label{glue_torsion_psa} Let $R_{ab}(\cl A', \cl A, \cl A'')$ be a recollement of abelian categories.
\begin{itemize}
    \item[i)] Let $(\cl X, \cl Y)$ be a torsion pair in $\cl B$. Then
    \begin{itemize}
    \item the pair $(q(\cl X), p(\cl Y))$ is a torsion pair in $\cl A'$, and
    \item the pair $(\pi(\cl X), \pi(\cl Y))$ is a torsion pair in $\cl A''$ if and only if $\ell \pi(\cl X)\subset \cl X$ if and only if $r \pi (\cl Y)\subset \cl Y$.
    \end{itemize}
    \item[ii)] Let $(\cl X', \cl Y')$ and $(\cl X'',\cl Y'')$ be torsion pairs in $\cl A'$ and $\cl A''$ respectively. Then
\[\begin{aligned}
        \cl X &:= \left\{A\in \cl A \mid q(A)\in \cl X', \pi(A)\in\cl X''\right\},~\mbox{and}\\
        \cl Y &:= \left\{A\in \cl A \mid p(A)\in\cl Y', \pi(A)\in\cl Y''\right\}
\end{aligned}\]
         is a torsion pair in $\cl A$ compatible with $R_{ab}(\cl A', \cl A, \cl A'')$.
    \end{itemize}
\end{prop}

\begin{definition}The torsion pair $(\cl X, \cl Y)$ in Proposition
\ref{glue_torsion_psa} (ii), is said to be \emph{glued} from $(\cl X',\cl Y')$
and $(\cl X'',\cl Y'')$. Similarly, if $\cl A', \cl A, \cl A''$ are hearts of bounded t-structures, the t-structure obtained by (forward or backward) tilting $\cl A$ at the glued torsion pair will be said to be \emph{glued} from bounded t-structures with hearts $\cl A', \cl A''$.
\end{definition}
Let $R_{tr}(\calD', \calD,\calD'')$ be a recollement of triangulated categories. It is shown 
in \cite[Section 9]{psarou} that, 
if $R_{\heart}(\cl A', \cl A, \cl A'')$ is a recollement of hearts and the torsion 
pair $(\cl X, \cl Y)$ in $\calA$ is glued from $(\cl X', \cl Y')$ and $(\cl X'', \cl Y'')$, 
then 
\[R_{\heart}\left(\mu^\sharp_{\cl Y'}\cl A', \mu^\sharp_{\cl Y}\cl A, \mu^\sharp_{\cl Y''}\cl A''\right)\quad\text{and}\quad R_{\heart}\left(\mu^\flat_{\cl X'}\cl A', \mu^\flat_{\cl X}\cl A, \mu^\flat_{\cl X''}\cl A''\right)\]
are other recollements of hearts. 
It is however not true in general that adjoint 
triples restrict to subcategories.

On the other hand, whether a recollement of abelian hearts lifts to a recollement of triangulated categories 
is also an interesting question considered for instance in \cite{lidia}.
\par
In Section \ref{sec_loc_pvd} we will be interested in an 
intermediate situation where a localization of triangulated categories 
\[0 \to \cl C'\stackrel{\iota}{\to} \cl C \stackrel{\pi}{\to}\cl C''\to 0
    \]
does not admit adjoints, but some bounded hearts fit into a recollement, 
and we would like to understand when tilting bounded t-structures 
in $\cl C''$ arise from tilting a bounded t-structure in $\cl C$. 
We will show in Theorem \ref{tilt_ses} that under  
suitable conditions, we can \emph{\lq\lq glue\rq\rq} bounded t-structures 
even in the absence of a recollement of triangulated categories.

\subsection{Categories from quivers with potential}\label{subset_ginzburg}

In this paper we work with the following triangulated categories associated 
to a differential graded (dg) algebra $\Lambda$:
    \begin{itemize}
    \item the derived category $\D(\Lambda)$,
    \item the perfect derived category $\per(\Lambda)\subset\D(\Lambda)$, and
    \item the perfectly valued derived 
    category $\pvd(\Lambda)\subset\D(\Lambda)$. 
    \end{itemize}
The latter consists of objects $M$ in $\D(\Lambda)$ such 
that $\Hom_{\calD(\Lambda)}(P, M[i])$ has finite dimension for 
any $P\in\per(\Lambda)$ and for any $i\in\mathbb{Z}$.
\par\medskip
Here and through the rest of the paper, we fix a 
non-degenerate finite quiver with potential $(Q,W)$ whose 
Jacobian algebra is finite dimensional. We denote 
the 3-Calabi-Yau Ginzburg dg algebra $\Gamma(Q,W)$ simply by $\Gamma$ and its Jacobian algebra $J(Q,W)$ 
by $J$. We recall that $J=H^0\Gamma$. 
The main features of $\pvd(\Gamma)$ that will be 
used in the next Sections are recalled here. We refer the 
reader to \cite{KY} for details.
\par\medskip
The category  
$\pvd(\Gamma)$ coincides with the
subcategory of $\D(\Gamma)$ consisting of dg modules 
of finite-dimensional total co-homology. It is 
generated by the simple dg modules $S_i$ 
associated with the vertices of $Q$. 
It is $\Hom$-finite and of Calabi-Yau 
dimension $3$, meaning that there is a natural 
isomorphism of $\acf$-vector spaces
\begin{equation}\label{cy}
    \nu:\Hom_{\pvd\Gamma}(E,F)\stackrel{\sim}{\to}
\Hom_{\pvd\Gamma}(F,E[3])^\vee
\end{equation}
for any pair of objects $E,F$.  
Indeed, for any pair of simple dg modules $S_i, S_j$, 
$i,j\in Q_0$, a basis of $\Hom^{d+1}(S_i,S_j)$ is given by the 
(finitely many) 
degree $(-d)$ arrows $i\stackrel{-d}{\longrightarrow}j$ 
in $\ol{Q}$. 
The Calabi-Yau property \eqref{cy} can be written as 
\[\Hom^{n}(S_i,S_j) \simeq \Hom^{3-n}(S_j,S_i)^\vee,
\]
and is combinatorically expressed by the 
existence of pairs of opposite arrows in $\ol{Q}$ in 
degree $-d,-d^*$ such 
that $(d+1)+(d^*+1)=3$, with the convention 
that $\Hom^0(S_i,S_i)=\acf$ is implicititely assumed and not 
represented by arrows.
\par\medskip
Since $\Gamma=\Gamma(Q,W)$ is homologically smooth 
(defined and proven in \cite[{Section 7}]{KY}), we have the following sequence of inclusions
\[
    \pvd(\Gamma)\subset\per(\Gamma)\subset\D(\Gamma),
\]
where the category $\per\Gamma$ is
generated by the indecomposable projective dg 
modules~$P_k=e_k\Gamma$, where $e_k$ is the idempotent 
element in $\Gamma$ associated with the vertex $k\in Q_0$.
As~$\Gamma=\oplus_{k=1}^n P_k$, we have 
\begin{equation}\label{PSrelation}
\Hom_{\per(\Gamma)}(P_k, S_i)=\delta_{ki} \acf, 
\end{equation}
for any $k,i\in Q_0$.
\par\medskip
We recall that $\pvd(\Gamma),\per(\Gamma),\D(\Gamma)$ 
are equivalent to $\pvd(\Gamma'),\per(\Gamma'),\D(\Gamma')$, 
if~$\Gamma'=\Gamma(Q',W')$ is associated with a 
quiver $(Q',W')$ obtained from $(Q,W)$ by mutations at its 
vertices. 
\par
The standard t-structure with heart $\Modules(J)$ on 
$\calD(\Gamma)$ restricts to bounded t-structures with 
heart $\rep(Q,W)\simeq \modules J$ on $\pvd\Gamma$ and 
$\per\Gamma$, that we still call standard. The abelian 
category $\modules J$ is finite-length with simple objects 
in bijection with the vertices of $Q$. The correspondence 
between HRS simple tilts in $\Modules J\subset\calD(\Gamma)$ 
or $\modules J\subset\pvd(\Gamma)$ and mutations at a 
vertex $i\in Q_0$ realizes $\rep(Q',W')=\modules J'$ as 
another heart in $\pvd(\Gamma)$, when $(Q',W')$ is obtained 
from $(Q,W)$ by a sequence of mutations. In facts, if 
$\EGp(\pvd\Gamma)$ is the connected component of the exchange graph 
containing $\modules J$ as a vertex, any other vertex corresponds 
to a finite heart equivalent to $\modules J'$ for the Jacobian algebra $J'$ 
of a quiver with potential $(Q',W')$ related to $(Q,W)$ by 
a sequence of mutations.

For later purposes, we recall here the relations between simples of $\modules J$ and simples of $\mu^{\sharp/\flat}_S \modules J$ inside $\pvd(\Gamma)$, assuming that $(Q,W)$ has no loops nor 2-cycles. Let $\cl H=\modules J$. Let $\Sim\cl H=\left\{[S_j]\,|\, j=1,\dots, n\right\}$ and $S=S_i$ for some $i$.

Then we have that
\[\Sim \mu_S^\sharp\cl H = \{\left[S[1]\right],[F_j]\,|\, j\neq i\}  
\quad \text{and} \quad 
\Sim \mu_S^\flat\cl H = \{\left[S[-1]\right],[E_j]\,|\, j\neq i\}\]
where $F_j$ and $E_j$ are defined as cones of universal extensions in $\pvd(\Gamma)$ and fit into the following distinguished triangles
\[
S \otimes \Ext^1_{\cl H}(S_j,S) \to F_j \to S_j
\quad\text{and}\quad
S_j \to E_j \to S\otimes \Ext^1_{\cl H}(S,S_j).
\]
In particular $F_j=S_j$ if $\Ext^1_{\cl H}(S_j,S)=0$, 
and $E_j=S_j$ if $\Ext^1_{\cl H}(S,S_j)=0$.

\section{Localization of Calabi-Yau categories from quivers}\label{sec_loc_pvd}

We use the hypotheses and the notation set in 
Section \ref{subset_ginzburg}. Moreover, we fix once for all
a proper subset $I\subset Q_0$ of the set of vertices of $Q$. 
We denote by $I^c=Q_0\setminus I$ its complement. The quiver 
with potential $(Q_I,W_I)$ 
is the full subquiver of $(Q,W)$ whose set of vertices is $I$, and with 
potential $W_I$ obtained from $W$ by deleting all cycles 
passing through 
vertices of $I^c$. We write $\Gamma_I$ for $\Gamma(Q_I,W_I)$ 
and
$J_I=H^0\Gamma_I=\acf Q_I/\del W_I$. The quiver $(Q_I,W_I)$ 
and the associated algebras $\Gamma_I,J_I$ satisfy the same 
properties as $(Q,W)$, $\Gamma$ and $J$. The following Lemma 
is clear and will be useful later
\begin{lem}
    Let $k$ be a vertex of $Q$ in $I\subset Q_0$, 
    or $k\not\in I$ be a vertex of $Q$ such that there are 
    no arrows from~$i$ to~$k$ or from~$k$ to~$i$ for 
    all~$i\in I$. Then $(\mu_k(Q,W))_I = (Q_I,W_I)$.
\end{lem}
We introduce here the Verdier quotients 
$\cl D(\Gamma)/\cl D(\Gamma_I)$ and  
$\pvd(\Gamma)/\pvd(\Gamma_I)$ and their underlying algebra  
$e\Gamma e$ for an idempotent $e\in\Gamma$. 
Under our 
assumptions, the Verdier quotient is a dg quotient in the 
sense of \cite{K1, D}, so the two perspectives are available. 
The goal of this Section is to present and study 
some homological properties 
of $\pvd(e\Gamma e)=\pvd(\Gamma)/\pvd(\Gamma_I)$.

\subsection{Idempotents}
For any $k\in Q_0$, we let $e_k$ be the idempotent element 
in $\Gamma$ associated with the vertex $k$ of the quiver $Q$. 
We define \[e:=\sum_{j\in I^c}e_j\]
to be the idempotent associated
to the
complement $I^c=Q_0\setminus I$.  We use the same notation, 
$e_k$ and $e$, for the same elements regarded as idempotents 
in $J$. As explained in~\cite{kalckyang2},
the Jacobian algebra $J_I$ and the Ginzburg algebra
$\Gamma_I$ of the subquiver $(Q_I,W_I)$ are isomorphic to 
the quotients by the bilateral ideals generated by $e$:
\[
    J_I \simeq J/ J e J, \text{ and }
    \Gamma_I \simeq \Gamma/\Gamma e \Gamma 
    \text{, respectively,}
\]
and we have exact inclusions of categories
\[\Modules (J_I) \hookrightarrow \Modules(J), \text{ and }
\modules(J) \hookrightarrow \modules(J),\]
and 
\[\calD(\Gamma_I)\hookrightarrow \calD(\Gamma).\]
At the abelian level, $\modules J_I$ is a Serre subcategory 
of $\modules J$ generated by the simple objects labelled by 
indexes in $I\subset Q_0$. In fact, Serre subcategories 
of $\modules J$ are in bijection with full 
subquivers $(Q_I,W_I)$ of $(Q,W)$, or equivalently with 
subsets $I\subset Q_0$. 
\par\medskip
The other algebras we are interested in here are $eJe$ 
and $e\Gamma e$, that we describe briefly. Let $A$ denote 
either $J=J(Q,W)$ or $\Gamma=\Gamma(Q,W)$, then $eAe$ consists of all (graded) 
paths in $A$ that start and end at some vertex in $I^c$. 
For $A=J$, this is still a quiver algebra for a finite 
quiver with potential $(Q^{eJe},W^{eJe})$ with
\begin{itemize}
    \item set of vertices $Q^{eJe}_0=I^c$, and
    \item set of arrows $Q^{eJe}_1$, consisting of arrows 
    in $Q_1$ connecting two vertices in $I^c$, and a basis 
    of arrows $j_1\to j_2$ for distinct paths 
    connecting $j_1,j_2\in I^c$ in $\acf Q$, not in the bilateral 
    ideal $\del W_I$.
\end{itemize}
If $J$ is a finite dimensional algebra, $eJe$ is still a 
finite dimensional 
algebra, and the quiver with potential $(Q^{eJe},W^{eJe})$ 
has finitely many vertices and finitely many arrows. See 
Figure \ref{fig} for examples.
\begin{figure}\centering

\begin{tikzpicture}[
arrow/.style={->,>=stealth,thick},
 midlabel/.style={midway,fill=white}]

\path (-4.5,0) node (q)  {$A_3=$};
\path (-3.5,0) node (x)  {$\bullet_1$};
\path (-2.5,0) node (y)  {$\bullet_2$};
\path (-1.5,0) node (z)  {$\bullet_3$};
\draw[arrow] 
(x) edge node[midway, above] {\tiny{$\alpha$}} (y)
(y) edge node[midway, above] {\tiny{$\beta$}} (z);


    \path (1,0) node (q2)  {$\mu_2A_3=$};
\path (2,0) node (xx)  {$\bullet_1$};
\path (3,-1) node (yy)  {$\bullet_2$};
\path (4,0) node (zz)  {$\bullet_3$};
\path (6,0) node (ww)  {$W=[\alpha\beta]\beta^*\alpha^*$};
\draw[arrow] 
(xx) edge node[midway, above] {\tiny{$[\alpha\beta]$}} (zz)
(zz) edge node[midway, below right] {\tiny{$\beta^*$}} (yy)
(yy) edge node[midway, below left] {\tiny{$\alpha^*$}} (xx);

\path (-1.5,-2) node (qbar)  {$Q^{eJ e}=$};
\path (-.5,-2) node (xxbar)  {$\bullet_{\bar{1}}$};
\path (1.5,-2) node (zzbar)  {$\bullet_{\bar{3}}$,};
\draw[arrow] 
(xxbar) edge node[midway, above] {\tiny{$\alpha\beta$}} (zzbar);

\end{tikzpicture}
\caption{The choice of $I=\{2\}$ for the quivers  with potentail $(A_3,0)$ 
and $(\mu_2 A_3,W)$ produces the same quiver $Q^{eJe}$ with 
trivial potential. The abelian quotient category does not 
change. 
}\label{fig}
\end{figure}

\par\smallskip
A similar argument shows that $e\Gamma e$ is the dg path 
algebra of 
a graded quiver $\ol{Q}^{e\Gamma e}$ with finitely many 
vertices 
$I^c$, but possibly infinitely many arrows: for any pair of 
vertices $j_1,j_2\in I^c$ and for any degree $-d\in\Z_{\leq 0}$, 
an arrow 
$j_1\stackrel{-d}{\to}j_2$ appears whenever there is a path of 
degree $-d$ in $\Gamma(Q,W)$. See Figure \ref{fig2} for an example. The differential is the same 
of $\Gamma$ (defined in section \ref{subsec_qp}), inducing an isomorphism $H^k(e\Gamma e) \simeq e H^k(\Gamma) e$.
\par
Writing $e\Gamma e= e\Gamma \otimes \Gamma e$ identifies the 
dg 
algebra $e\Gamma e$ with the endomorphism dg algebra of the 
projective module $\Gamma e = \sum_{j\in I^c}\Gamma e_j$ 
in $\calD(\Gamma)$.

\begin{figure}\centering
\begin{tikzpicture}[
arrow/.style={->,>=stealth,thick},
 midlabel/.style={midway,fill=white}]
    every node/.style={circle, draw, minimum size=1cm},
    every loop/.style={looseness=8}
\node (Q) at (-5,0) {$\ol{Q}=$};
\node (v1) at (-4,0) {$\bullet_1$};
\node (v2) at (-2,0) {$\bullet_2$};
\node (QQ) at (1,0) {$\ol{Q}^{e\Gamma e}=$};
\node (v) at (2,0) {$\bullet_{\bar{1}}$};
\draw[arrow]
(v1) .. controls (-3.7, -.8) and (-4.3,-.8) .. node[below] {\tiny{$-2$}} node[left] {\tiny{$\ell_1$}} (v1);
\draw[arrow]
(v2) .. controls (-1.7, -.8) and (-2.3,-.8) ..node[below] {\tiny{$-2$}} node[left] {\tiny{$\ell_2$}} (v2);
\draw[arrow]
(v1) .. controls (-3.5,.6) and (-2.5,.6) .. node[below] {\tiny{$0$}} node[above] {\tiny{$\alpha$}} (v2);
\draw[arrow]
(v2) .. controls (-2.5,-.6) and (-3.5,-.6) .. node[below] {\tiny{$-1$}} node[above] {\tiny{$\alpha^\vee$}} (v1);
\draw[arrow]
(v) .. controls (3, .6) and (3,-.6) ..  node[right] {\tiny{$-2k-1,\ k\geq 0$}} (v);
\draw[arrow]
(v)  .. controls (2.3, -.8) and (1.7,-.8) ..  node[below] {\tiny{$-2$}} (v);
\end{tikzpicture}
\caption{The graded quiver $\ol{Q}^{e\Gamma e}$ obtained from $\Gamma=
\Gamma(A_2,0)$ for $e=e_2$ has infinitely many arrows: one in degree 
$-2$ and one for any odd negative integer, obtained by 
concatenating $\alpha$, $(\ell_2)^k$, and $\alpha^\vee$ in 
$\ol{Q}=\ol{A_2}$. The differential is the same of $\Gamma$.
}\label{fig2}
\end{figure}

\subsection{Recollements induced by idempotents}
As explained in~\cite{kalckyang2},
the Ginzburg algebra
$\Gamma_I$ is isomorphic to $\Gamma/\Gamma e \Gamma$,
for $e=\sum_{j\in I^c}e_j$ defined above. On the other hand, the Verdier
quotient $\calD(\Gamma)/\calD(\Gamma_I)$ coincides with
$\calD(e\Gamma e)$. 
 The derived
category associated to $\Gamma$ and its restriction to $I$
fits in a
recollement of triangulated categories induced by an idempotent,
\begin{equation}\label{tr_recollement}
\xymatrix{
0 \ar[r] & \calD(\Gamma_I)
\ar[r]^{\iota} & \calD(\Gamma) \ar[r]^{\pi} \ar@/_1pc/[l]_{q} \ar@/^1pc/[l]_{p} & \calD(e\Gamma e) \ar[r] \ar@/_1pc/[l]_{\ell} \ar@/^1pc/[l]_{r} &0
}
\end{equation}
where
\[\begin{aligned}
   &q(-)= - \stackrel{\mathbb{L}}{\otimes}_\Gamma \Gamma/ \Gamma e \Gamma, && \ell(-)= -\stackrel{\mathbb{L}}{\otimes}_{e\Gamma e} e\Gamma,\\
   &\iota(-)=\mathbb{R}\Hom_{\Gamma/\Gamma e \Gamma}(\Gamma/\Gamma e\Gamma, -), && \pi(-)=\mathbb{R}\Hom_\Gamma(e\Gamma, -),\\
 &p(-)= \mathbb{R}\Hom_\Gamma(\Gamma/\Gamma e \Gamma,-), && r(-)=\mathbb{R}\Hom_{e\Gamma e}(\Gamma e, -),
\end{aligned}\]
are adjoint pairs of fully faithful functors. 
They induce a recollement of hearts

\begin{equation}\label{recoll_hearts}
\xymatrix@C=2cm{\calD(\Gamma_I) \ar[r]^{\iota} \ar[dd]^{H^0_{\Mod J_I}}
& \calD(\Gamma) \ar[r]^{\pi}\ar@/^1pc/[l]_{p}\ar@/_1pc/[l]_{q} \ar[dd]^{H^0_{\Mod J}}
&  \calD(e\Gamma e)\ar@/^1pc/[l]_{r}\ar@/_1pc/[l]_{l} \ar[dd]^{H^0_{\Mod e J e}}\\ \\
\Mod J_I \ar[r]^{\iota}\ar@/^1pc/[uu]^{\epsilon_I} & \Mod J \ar[r]^{\pi}\ar@/^1pc/[l]_{p}\ar@/_1pc/[l]_{q} \ar@/^1pc/[uu]^{\epsilon} &  \Mod e J e, \ar@/^1pc/[l]_{r}\ar@/_1pc/[l]_{\ell} \ar@/^1pc/[uu]^{\epsilon_e}
}
\end{equation}
where the $\epsilon$'s maps are inclusions, and the functors between hearts are denoted exactly as the triangulated functors with abuse of notation. 
We refer to \cite{psarou} for 
examples of recollements induced by idempotents and for the general theory. See also \cite[\S7]{kalckyang2} for the
special example involving Calabi-Yau algebras from quivers with potential.
\par\medskip
At the level of finitely generated modules over the Jacobian
algebras, $\modules  J_I \subset \modules  J$ is localizing and
co-localizing, and we have a similar recollement of abelian categories
\begin{equation}\label{mod_recoll}
    \xymatrix@C=2cm{
    \modules J_I \ar[r]^{\iota} & \modules J \ar[r]^{\pi}\ar@/^1pc/[l]_{p}\ar@/_1pc/[l]_{q}  &  \modules e J e \ar@/^1pc/[l]_{r}\ar@/_1pc/[l]_{\ell},
    }
\end{equation}
where we use the same notation as in \eqref{recoll_hearts} for obvious reasons:
\[\begin{aligned}
    &\iota(-) = \Hom_{ J/ J e  J}( J/ J e J,-),&& \pi(-)=\Hom_{ J}(e J,-),\\
    &q(-)= - \otimes_{ J}  J/  J e  J, && \ell(-)= -\otimes_{e J e} e J,\\
    &p(-)= \Hom_{ J}( J/ J e  J,-), && r(-)=\Hom_{e J e}( J e, -).
\end{aligned}\]

The article \cite{geigle} by Geigle and Lenzing is a reference text for the description of $\modules J_I$ and $\modules eJe$ inside $\modules J$.

A trivial corollary of Proposition \ref{glue_torsion_psa} 
is that:
\begin{cor} Any torsion pair in $\modules(e  J e)$ lifts to a torsion
    pair in $\modules  J$ (in non unique way).
\end{cor}

\subsection{The quotient category $\pvd(e\Gamma e)$}
By definition, the maps $\iota$ and $\pi$ appearing in \eqref{tr_recollement} restrict to the perfectly valued derived categories of the same dg algebras, i.e., to modules with finite dimensional total homology. Therefore the Verdier quotient $\pvd(\Gamma)/\pvd(\Gamma_I)$
is equivalent to $\pvd(e\Gamma e)$
\[\xymatrix{
    0 \ar[r]& \pvd(\Gamma_I) \ar[r]^{\iota}
    &\pvd(\Gamma) \ar[r]^{\pi} & \pvd(e \Gamma e)\ar[r]& 0. }
    \]
The functors $\ell$ and $q$ restrict to the perfect categories
\[\xymatrix{
    0 & \per(\Gamma_I) \ar[l]
    &\per(\Gamma)\ar[l]_{q} & \per(e \Gamma e)\ar[l]_{\ell}& 0. \ar[l] }
    \]
\begin{rmk}The left adjoints $q,\ell$ do not descend to the $\pvd$ level. 
 \end{rmk}
We learned this argument from Dong Yang; we include a
proof for completeness. The crucial 
feature is Calabi-Yau dimensional finiteness.
\begin{proof}Suppose that $\pi:\pvd(\Gamma) \to \pvd(e\Gamma e)$ had a 
    left adjoint $\pi_!$. Then the pair
    \[(\cl X, \cl Y) := \left(\pi_!(\pvd(e\Gamma e)), 
    \iota(\pvd(\Gamma/ \Gamma e\Gamma))\right)\]
would be a stable t-structure (i.e., the aisle and the co-aisle 
are closed under arbitrary shifts) on $\pvd(\Gamma)$, hence a split 
t-structure since $\pvd(\Gamma)$ is Calabi-Yau. 
Indeed, the Calabi-Yau condition applied 
to $X\in \cl X$ and $Y \in \cl Y$,
\[\Hom(Y,X)^\vee \simeq \Hom(X,Y[3]) = 0,\] 
implies that, 
for any $E\in \pvd(\Gamma)$,
\[x(E) \to E \to y(E) \to x(E)[1], \quad x(E)\in \cl X,\ y(E)\in \cl Y\]
is a splitting sequence with $y(E)\stackrel{0}{\to}x(E)[1]$ and $E \simeq x(E)\oplus y(E)$.

Similarly, one proves that $\pi$ does not admit a right
adjoint, and that there are no adjoints in $\pvd(\Gamma)$
to the inclusion $\iota$.
\end{proof}

\begin{theorem}\label{rmk_Hrestrict}
The cohomology theories in the derived categories of \eqref{recoll_hearts} 
associated with $\Mod J_I$, $\Mod J$, and $\Mod e  J e$
restrict to bounded t-structures and cohomology theories on
$\pvd(\Gamma_I)$, $\pvd(\Gamma)$, and $\pvd(e \Gamma e)$, with hearts
$\modules  J_I$, $\modules J$, and $\modules e J e$, respectively.
\end{theorem}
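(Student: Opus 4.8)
The plan is to handle the three dg algebras in turn, the work being concentrated on $e\Gamma e$. For $\Gamma$ and $\Gamma_I$ the assertion is exactly what is recalled in Section~\ref{subset_ginzburg} (see also \cite{KY}): both are non-positively graded dg algebras with finite-dimensional zeroth homology, so the standard t-structure on $\calD(\Gamma)$ (resp.\ $\calD(\Gamma_I)$) restricts to a bounded t-structure on $\pvd(\Gamma)$ (resp.\ $\pvd(\Gamma_I)$); boundedness holds because an object of finite-dimensional total homology has nonzero homology in only finitely many degrees, and the heart is $\Mod J\cap\pvd(\Gamma)=\modules J$ (resp.\ $\modules J_I$) since $J$ and $J_I$ are finite-dimensional. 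One cannot argue this way for $e\Gamma e$, which need not be homologically smooth and may carry infinitely many arrows in negative degrees; the idea is instead to transport the statement along the Verdier quotient $\pi\colon\pvd(\Gamma)\to\pvd(\Gamma)/\pvd(\Gamma_I)=\pvd(e\Gamma e)$.

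The first step is to observe that $\pi\colon\calD(\Gamma)\to\calD(e\Gamma e)$ is t-exact for the standard t-structures (hearts $\Mod J$ and $\Mod eJe$) appearing in \eqref{recoll_hearts}. Indeed $\pi(-)=\RHom_\Gamma(e\Gamma,-)=\Hom_\Gamma(e\Gamma,-)$ since $e\Gamma=\bigoplus_{i\in I^c}e_i\Gamma$ is projective as a right $\Gamma$-module, and this functor commutes with the standard truncations, so $H^n\pi(M)$ depends only on $H^n(M)$; equivalently, t-exactness of $\pi$ is part of the content of \eqref{recoll_hearts}, or of Proposition~\ref{prop_antieau} applied to $\iota\colon\calD(\Gamma_I)\hookrightarrow\calD(\Gamma)$ at the unbounded level, using that $\Mod J_I$ is a Serre subcategory of $\Mod J$. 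Since $\pi$ restricts to $\pi\colon\pvd(\Gamma)\to\pvd(e\Gamma e)$ and is the identity on objects, the restricted functor is t-exact for the restricted standard t-structures of the first paragraph and is essentially surjective.

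The second step is the actual transport. Given $X\in\pvd(e\Gamma e)$, write $X\cong\pi(M)$ with $M\in\pvd(\Gamma)$; by t-exactness the standard truncation triangle of $X$ is $\pi(\tau^{\le0}M)\to X\to\pi(\tau^{\ge1}M)$, whose outer terms lie in $\pi(\pvd(\Gamma))=\pvd(e\Gamma e)$ since $\tau^{\le0}M,\tau^{\ge1}M\in\pvd(\Gamma)$ by the first paragraph. Hence the standard t-structure on $\calD(e\Gamma e)$ restricts to $\pvd(e\Gamma e)$, it is bounded because $M$ has homology in finitely many degrees, and its heart is $\Mod eJe\cap\pvd(e\Gamma e)$. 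It remains to identify this heart with $\modules eJe$: one inclusion is clear, as $\pi$ sends a finite-dimensional $J$-module $N$ to the finite-dimensional $eJe$-module $eN$; for the reverse, any $X$ in the heart satisfies $H^n_{\modules J}(M)\in\ker(\pi|_{\modules J})=\modules J_I$ for $n\ne0$ (using t-exactness and $H^n_{\Mod eJe}(X)\cong\pi(H^n_{\modules J}(M))$), so killing the corresponding subquotients of the Postnikov tower of $M$ gives $X\cong\pi(H^0_{\modules J}(M))\in\pi(\modules J)$, and finally $\pi(\modules J)=\modules eJe$ because every $N\in\modules eJe$ lifts to $\ell'(N)=N\otimes_{eJe}eJ\in\modules J$ with $\pi\ell'(N)\cong N$, via \eqref{mod_recoll}. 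Alternatively, one may apply Proposition~\ref{prop_antieau} directly at the $\pvd$ level, with $\modules J_I$ Serre in $\modules J$, to produce a bounded t-structure on $\pvd(e\Gamma e)$ with heart $\modules J/\modules J_I\simeq\modules eJe$, which must coincide with the restriction of the standard one since a bounded t-structure is determined by its heart.

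The only genuine obstacle is that $e\Gamma e$ is badly behaved, so the statement admits no intrinsic verification and everything has to be routed through $\pi$ and the recollement; concretely, the crux is the t-exactness of $\pi$ together with the identification of the essential image $\pi(\modules J)$ with $\modules eJe$ inside $\calD(e\Gamma e)$. Once these are in hand, the restriction property, boundedness, and the description of the heart follow formally from t-exactness and essential surjectivity.
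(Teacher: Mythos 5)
Your proof is correct, but it takes a genuinely different and considerably longer route than the paper's, and rests on a misapprehension about the intrinsic criterion being applied. The paper proves the theorem in a single sentence by citing \cite[Proposition~3.2]{kalckyang2}, which applies \emph{uniformly} to all three dg algebras: the only hypotheses are that the dg algebra is non-positive and that its $H^0$ is Noetherian, and both hold for $e\Gamma e$ ($e\Gamma e\subset\Gamma$ sits in degrees $\le 0$, and $H^0(e\Gamma e)=eJe$ is finite-dimensional). Your stated obstacle --- that $e\Gamma e$ ``need not be homologically smooth and may carry infinitely many arrows in negative degrees'' --- is a red herring here: neither homological smoothness nor degreewise finite-dimensionality is among the hypotheses of the cited proposition, so there is no reason to single out $e\Gamma e$ for special treatment. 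That said, your transport argument is valid as an alternative: the t-exactness of $\pi$ via projectivity of the right $\Gamma$-module $e\Gamma$ is correct, the truncation argument is clean, and the identification of the heart with $\modules eJe$ (via $\pi(\modules J)=\modules eJe$ through $\ell'$ and \eqref{mod_recoll}, or via Proposition~\ref{prop_antieau}) is fine. What your route buys is a self-contained argument that avoids relying on the exact form of the Kalck--Yang result; what it costs is a dependence on the essential surjectivity of $\pi\colon\pvd(\Gamma)\to\pvd(e\Gamma e)$, i.e.\ on the identification $\pvd(e\Gamma e)\simeq\pvd(\Gamma)/\pvd(\Gamma_I)$ asserted in the preceding subsection, together with considerably more work than the paper's one-liner.
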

\begin{proof}
The statement follows from  \cite[{Proposition 3.2}]{kalckyang2}, being $ J$, $ J_I$, $e J e$ Noetherian rings, and
$\Gamma$, $\Gamma_I$, $e\Gamma e$ non-positive dg algebras.
\end{proof}

In particular, 
\[\modules J_I=\pvd\Gamma_I\cap\modules J\subset \modules J\]
as a Serre subcategory,  
and $\modules eJe\simeq \modules J / \modules J_I$.
\par
\begin{rmk*}\label{EG_Gamma}    We fix the standard heart $\cl A:=\modules J$ as a distinguished heart 
for $\pvd(\Gamma)$, so that $\EGp(\pvd\Gamma)$ is the connected 
component of $\EG(\pvd\Gamma)$ containing $\modules J$ as a vertex. \par
We recall that the graph $\EGb(\pvd(e\Gamma e))$ 
is the full subgraph 
of $\EG(\pvd(e\Gamma e))$ whose vertices correspond to hearts 
$\ol{\cl H}$ for some $\cl H$ in $\EGp(\pvd\Gamma)$. 
Then $\EGb(\pvd(e\Gamma e))$ contains $\modules(eJe)$ as a vertex.
\end{rmk*}
\par
While homological smoothness guarantees the 
inclusion $\pvd(\Lambda)\subset\per(\Lambda)\subset\calD(\Lambda)$, for $\Lambda=\Gamma,\Gamma_I$, we have no reasons to expect the 
same chain of inclusions for $e\Gamma e$, and the relation 
between $\pvd(e\Gamma e)$ and $\per(e\Gamma e)$ remains unknown to us,
\[
\xymatrix@C=2cm{
\calD(\Gamma_I) \ar[r]^{\iota} 
& \calD(\Gamma) \ar[r]^{\pi}\ar@/^1pc/[l]_{p}\ar@/_1pc/[l]_{q} 
&  \calD(e\Gamma e)\ar@/^1pc/[l]_{r}\ar@/_1pc/[l]_{\ell} \\
\per(\Gamma_I) \ar@{}[u]|{\bigcup}& \per(\Gamma) \ar@{}[u]|{\bigcup} \ar[l]_{q} & \per(e\Gamma e) \ar[l]_{\ell}\\
\pvd(\Gamma_I) \ar@{}[u]|{\bigcup} \ar[r]^{\iota} & \pvd(\Gamma) \ar[r]^{\pi} \ar@{}[u]|{\bigcup} & \pvd(e\Gamma e). 
}
\]
On the other hand, we can compare 
$\per (e\Gamma e)$ and $\pvd(\Gamma_I)$, as they can be realized 
inside $\per \Gamma$, by composing $\iota$ with the inclusion $\pvd\Gamma\subset\per\Gamma$
\[\label{orth_inclusion}
\xymatrix@C=2cm{
\pvd(\Gamma_I) \ar[r]^{\iota} &  \per \Gamma  & \per (e\Gamma e) \ar[l]_{\ell}.
}
\]
\begin{lemma}\label{lem:perpV} 
The orthogonality relation
$\ell(\per(e \Gamma e))=
{^{\perp_{\per\Gamma}}}\iota(\pvd\Gamma_I)$ holds in $\per\Gamma$.
\end{lemma}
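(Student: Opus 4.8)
The plan is to work entirely inside $\per\Gamma$, exploiting the recollement \eqref{tr_recollement} and the Calabi-Yau structure of $\pvd\Gamma$. First I would recall that $\ell:\per(e\Gamma e)\to\calD(\Gamma)$ is fully faithful and lands in $\per\Gamma$, and that its essential image is the left orthogonal to $\ker\pi=\iota(\calD(\Gamma_I))$ inside $\calD(\Gamma)$: this is the standard recollement identity $\ell(\calD(e\Gamma e))={^{\perp}}\iota(\calD(\Gamma_I))$, which follows from the adjunction $(\ell,\pi)$ together with $\pi\iota=0$ and the fact that $\pi\ell=\mathrm{id}$. So for $Y\in\calD(\Gamma_I)$ and $X$ in the image of $\ell$ we get $\Hom_{\calD(\Gamma)}(\ell(-),\iota Y)=0$; restricting to $X=\ell(P)$ with $P\in\per(e\Gamma e)$ and $Y\in\pvd\Gamma_I\subset\calD(\Gamma_I)$ gives the inclusion $\ell(\per e\Gamma e)\subseteq{^{\perp_{\per\Gamma}}}\iota(\pvd\Gamma_I)$ for free.

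For the reverse inclusion $\ell(\per e\Gamma e)\supseteq{^{\perp_{\per\Gamma}}}\iota(\pvd\Gamma_I)$, I would take $X\in\per\Gamma$ with $\Hom_{\per\Gamma}(X,\iota Y)=0$ for all $Y\in\pvd\Gamma_I$, and aim to show $X\cong\ell\pi(X)$. The natural candidate map is the unit $\eta_X\colon X\to \ell\pi(X)$ (equivalently, $r\pi(X)\to X$ could be used, but $\ell$ is the left adjoint available on $\per$). Complete $\eta_X$ to a triangle $N\to X\to\ell\pi(X)\to N[1]$ in $\calD(\Gamma)$; standard recollement yoga shows $N\in\iota(\calD(\Gamma_I))$ and in fact $N$ lies in $\per\Gamma$ hence in $\iota(\per\Gamma_I)\subset\iota(\D(\Gamma_I))$. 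The key point is that $\Hom_{\D(\Gamma)}(X,\iota Y)=0$ for all $Y\in\pvd\Gamma_I$ forces $N$ to vanish: one applies $\Hom(-,\iota Y)$ to the triangle, uses $\Hom(\ell\pi X,\iota Y)=0$ to conclude $\Hom(N,\iota Y)=0$ for all $Y\in\pvd\Gamma_I$, and then invokes that $\pvd\Gamma_I$ generates $\per\Gamma_I$ under the Calabi-Yau/duality structure — more precisely, since $\Gamma_I$ is a 3-Calabi-Yau Ginzburg algebra, $\Hom_{\per\Gamma_I}(N',-)=0$ on all of $\pvd\Gamma_I$ (for $N'\in\per\Gamma_I$) implies $N'=0$, because $\pvd\Gamma_I$ contains the simple modules which cogenerate everything and $\Hom(N',S_i)=0$ for all simples $S_i$ of $J_I$ kills $N'$ in the bounded derived category of a finite-dimensional algebra. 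This gives $N=0$, i.e. $X\cong\ell\pi(X)\in\ell(\per e\Gamma e)$, completing the first equality.

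For the second equality $\iota(\pvd\Gamma_I)=\ell(\per e\Gamma e)^{\perp_{\pvd\Gamma}}$, one inclusion is again immediate from $\pi\ell=\mathrm{id}$, $\pi\iota=0$ and adjunction: $\Hom_{\pvd\Gamma}(\ell P,\iota Y)\cong\Hom_{\per e\Gamma e}(P,\pi\iota Y)=0$. For the reverse, take $Y\in\pvd\Gamma$ with $\Hom_{\pvd\Gamma}(\ell P,Y)=0$ for all $P\in\per e\Gamma e$; in particular taking $P$ to run over the indecomposable projectives $e_i e\Gamma e$ for $i\in I^c$, the condition reads $\Hom(\Gamma e_i, Y)=0$, i.e. $H^\bullet(e_i Y)=0$ for all $i\notin I$, which says exactly $\pi(Y)=\RHom_\Gamma(e\Gamma,Y)=0$, hence $Y\in\ker\pi=\iota(\pvd\Gamma_I)$ (using that $Y$ already lies in $\pvd\Gamma$ and $\iota$ identifies $\pvd\Gamma_I$ with $\ker(\pi|_{\pvd\Gamma})$, by Theorem~\ref{rmk_Hrestrict}).

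The main obstacle I anticipate is the reverse inclusion in the first equality: controlling the ``error term'' $N$ and ensuring it genuinely lies in $\per\Gamma_I$ rather than merely in $\calD(\Gamma_I)$, and then squeezing it to zero. The cleanest route is probably to avoid the word ``$\per$'' for $N$ altogether: argue directly that $N\in\iota(\D\Gamma_I)$ with $\Hom_{\D\Gamma}(N,\iota(\pvd\Gamma_I))=0$, and then use that $\iota(\pvd\Gamma_I)$ cogenerates $\iota(\D^b(\modules J_I))$-type objects — but $N$ need not be bounded a priori, so one really does want the perfectness of $X$ and $\ell\pi X$ to propagate to $N$, via the two-out-of-three property for $\per$ inside $\D$, which holds because $\per\Gamma$ is thick and $\ell\pi X=\ell(\pi X)$ with $\pi X\in\per(e\Gamma e)$ hence $\ell\pi X\in\per\Gamma$. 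Once $N\in\per\Gamma\cap\iota(\D\Gamma_I)=\iota(\per\Gamma_I)$, finiteness of $\Hom$ in $\pvd\Gamma_I$ and the fact that the simples of $J_I$ detect vanishing of perfect complexes over the finite-dimensional algebra $J_I$ (or more robustly, over $\Gamma_I$ via its standard t-structure of Theorem~\ref{rmk_Hrestrict}) finishes it.
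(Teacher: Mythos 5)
Your proof is essentially correct and reaches the right conclusions, but it takes a genuinely different route from the paper's, so a comparison is worthwhile.

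For the first equality, after the easy inclusion you play recollement yoga with a functor triangle and then reduce to a detection statement over $\Gamma_I$. One direction error to fix: since $\ell\dashv\pi$, the natural map is the counit $\ell\pi X\to X$, not a unit $X\to\ell\pi X$ (likewise the map involving $r$ is the unit $X\to r\pi X$, not $r\pi X\to X$). The correct recollement triangle is $\ell\pi X\to X\to\iota q X\to$, with the error term $N=\iota q X$ on the right; the long exact sequence of $\Hom(-,\iota Y)$ then gives $\Hom(N,\iota Y)=0$ for all $Y\in\pvd\Gamma_I$ just as in your argument, using $\Hom(\ell\pi X,\iota Y[-1])=0$ and the hypothesis on $X$. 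The two-out-of-three observation that $N\in\per\Gamma$, and that $qX\in\per\Gamma_I$ (the paper notes $q$ restricts to perfect categories), is right, and full faithfulness of $\iota$ reduces everything to showing that a perfect $\Gamma_I$-complex $qX$ with $\Hom(qX,Y)=0$ for all $Y\in\pvd\Gamma_I$ must vanish.

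There, however, your justification is off: you appeal to ``the bounded derived category of a finite-dimensional algebra,'' but $\per\Gamma_I$ is not $\calD^b(\modules J_I)$ — $\Gamma_I$ is a $3$-Calabi-Yau Ginzburg dg algebra with cohomology in negative degrees, and $\pvd\Gamma_I$ is a proper thick subcategory of $\per\Gamma_I$. The conclusion is still correct, but for the reason the paper actually uses: $\Gamma_I$ is a non-positive dg algebra with finite-dimensional $H^0$, so every nonzero object of $\per\Gamma_I$ has a minimal perfect presentation, from which one reads off a nonzero $\Hom^*(-,S_i)$ for some simple $S_i\in\pvd\Gamma_I$. The paper's own proof runs exactly this minimal-perfect-presentation argument, but directly inside $\per\Gamma$, avoiding the functor-triangle reduction; the two routes therefore differ in the reduction step, while the core vanishing criterion is the same.

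For the second equality your argument is noticeably more direct than the paper's. You evaluate $\Hom(\ell P, Y)\cong\Hom(P,\pi Y)$ on the projective generators of $\per(e\Gamma e)$ to see that $\pi Y=0$, hence $Y\in\iota(\pvd\Gamma_I)$ by Theorem~\ref{rmk_Hrestrict}. The paper instead invokes the Koszul-dual realization of $\pvd\Gamma$ as the perfect category of the dg endomorphism algebra of $\bigoplus S_i$ (from \cite{qiubraid16}) and runs the minimal-perfect-presentation argument in that picture. Your approach avoids that external input. So: same conclusions, a cosmetic direction error, a muddled justification that should be replaced by the minimal-perfect-presentation criterion over $\Gamma_I$, and a cleaner proof of the second half.
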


\begin{proof}
By equation \eqref{tr_recollement},
$\ell(\calD(e\Gamma e))={^{\perp_{\calD(\Gamma)}}}
\iota(\calD(\Gamma_I))$
(see \cite[{\S 1.4.3.3}]{f-pervers}), and this, together with
\eqref{orth_inclusion}, implies that
$\ell(\per (e\Gamma e)) \subseteq {^{\perp_{\per \Gamma}}}\iota(\pvd\Gamma_I)$.
To prove the statement, we need to show that this inclusion is indeed an
equality. For simplicity, we identify the categories associated with $\Gamma_I$
and with $e\Gamma e$ with their images under $\iota$ and $\ell$ respectively.
We show that, for any $X\in\per\Gamma$ not in $\per (e\Gamma e)$, 
then $X\not\in {^{\perp_{\per\Gamma}}\pvd\Gamma_I}$.
\par
By \cite[{Lemma 2.14}]{plamondon}, any object $X$ in $\per\Gamma$ 
(resp.\ in $\per (e\Gamma e)$) admits a minimal
perfect presentation, i.e., it is quasi-isomorphic to a finite direct sum of
shifts of projective objects $\bigoplus_{k\in K\subset Q_0} P_k[d_k]$ (resp.\
of $\bigoplus_{k\in K\subset I^c} P_k[d_k]$), and its differential, as a degree~1 map,
is a strictly upper triangular matrix with entries in the ideal 
of~$\Gamma$
generated by the arrows of the double quiver~$\oQ$. Note that for any 
arrow $\alpha\colon i\to j$
in $\oQ$ that induces some irreducible morphism
$P_j\xrightarrow{f_\alpha} P_i[\deg \alpha]$, we have $\Hom(f_\alpha,S)\equiv 0$ 
for any
simple $S\in\modules J$ (since either $\Hom(P_j,S)=0$ or $\Hom(P_i[d_i],S)=0$).
This implies, in particular, that for a simple $S\in \modules J$,
\begin{equation}\label{eq:XS}
	\Hom_{\per\Gamma}(X,S) \=
    \bigoplus_k\Hom_{\per\Gamma}(P_k[d_k], S).
\end{equation}
Therefore, for any
$X$ in $\per\Gamma$ not in $\per (e\Gamma e)$, its perfect presentation has a summand
$P_i[d_i]$ for some $i\in I$, hence by \eqref{PSrelation}
$\Hom^0(X,S_i[-d_i])\neq 0$ with $S_i[-d_i]\in\pvd \Gamma_I$. 
\end{proof}
Lemma \ref{lem:perpV} will be used for the proof of the 
simple-projective duality for $\pvd e\Gamma e$ 
and $\per e \Gamma e$, in Section \ref{sec_duality}.

\subsection{Gluing torsion pairs, and tilting in $\pvd(e\Gamma e)$}

The perfectly valued categories 
\[\pvd(\Gamma_I) \to \pvd(\Gamma) \to \pvd(e\Gamma e),\] 
despite admitting no adjoints to the 
quotient functor, are sandwiched between recollements 
$R_{tr}\big(\cl D(\Gamma_I), \cl D(\Gamma), \cl D(e\Gamma e)\big)$ 
and $R_{ab}\big(\modules(J_I), \modules(J), \modules(eJe)\big)$ 
of triangulated and abelian categories, involving the derived 
categories of the Ginzburg algebras and the small module 
categories of the Jacobian algebras, respectively. See 
equations \eqref{tr_recollement} and \eqref{mod_recoll}. 
We exploit this feature to show that tilting the standard 
heart $\modules(e J e)$ of $\pvd(e\Gamma e)$ at any torsion 
pair results in another heart of quotient type. To make 
the presentation ligther, in the following we 
write denote $\cl H_\torsionfree^\sharp$ 
for $\mu^\sharp_\torsionfree\cl H$ and $\cl H_\torsion^\flat$ for 
$\mu^\flat_\torsion\cl H$, whenever $(\torsion,\torsionfree)$ is a 
torsion pair in a bounded heart $\cl H$.
\par
\begin{prop}\label{tilt_ses}
    Let $(\cl U, \cl V)$ and $(\torsion,\torsionfree)$ be torsion pairs in 
    the standard hearts $\modules J_I$ and $\modules e J e$
    of $\pvd\Gamma_I$ and $\pvd e\Gamma e$ respectively. 
    Then $(\cl U, \cl V)$ and $(\torsion, \torsionfree)$ \emph{glue} to a 
    torsion pair $(\cl X, \cl Y)$ in $\modules J$ which is 
    compatible with the recollement of abelian categories. 
    The (forward or backward) tilts at $(\cl U, \cl V)$, $(\cl X,\cl Y)$, 
    $(\torsion,\torsionfree)$ produces new short exact sequences
    \[ 0 \to \left(\modules  J_I \right)^\sharp_{\cl V} 
    \stackrel{\iota}{\rightarrow} \left( \modules J\right)^\sharp_{\cl Y} 
    \stackrel{\pi}{\rightarrow} \left(\modules e J e \right)^\sharp_\torsionfree \to 0
    \]
    and
    \[ 0 \to \left(\modules  J_I \right)^\flat_{\cl U} 
    \stackrel{\iota}{\rightarrow} \left( \modules J\right)^\flat_{\cl X} 
    \stackrel{\pi}{\rightarrow} \left(\modules e J e \right)^\flat_\torsion \to 0
    \]
    of hearts of bounded t-structures in $\pvd\Gamma_I \stackrel{\iota}\to \pvd\Gamma\stackrel{\pi}\to \pvd(e\Gamma e)$.
    \end{prop}
    \begin{proof}  
    We prove the proposition for backward tilt, the forward tilt being completely analogous. 
    The closure by limits of $\torsion\subset \modules e J e$ (resp.\  
    $\cl U\subset\modules J_I$) is a torsion class 
    $\widetilde\torsion$ in $\Mod eJ e$ (resp.\ 
    $\widetilde{\cl U}$ in $\Mod J_I$)
    such that $\widetilde\torsion\cap\modules e J e= \torsion$ (resp.\ 
    $\widetilde{\cl U}\cap\modules J_I= \cl U$), see \cite{crawley1994locally}. 
    We consider the recollement of abelian hearts \eqref{recoll_hearts}. 
    The gluing method by Beilinson-Bernstein-Deligne, together with 
    \cite[{Propositions 9.1(ii) and 10.2}]{psarou} 
    implies that the torsion classes $\widetilde\torsion$ 
    and $\widetilde{\cl U}$ induce 
    a torsion class $\widetilde{\cl X}$ in $\Mod J$ such that 
    $\pi(\widetilde{\cl X})=\widetilde\torsion$, 
    $q(\widetilde{\cl X})=\widetilde{\cl U}$, and a new 
    recollement of abelian hearts 
    \[\xymatrix{
    \left(\Mod J_I\right)^\flat_{\widetilde{\cl U}} \ar[r]^{\iota''} 
    & \left(\Mod J\right)^\flat_{\widetilde{\cl X}} \ar[r]^{\pi''}\ar@/^2pc/[l]^{p''}\ar@/_2pc/[l]_{q''} 
    &  \left(\Mod e J e\right)^\flat_{\widetilde{\torsion}} \ar@/^2pc/[l]^{r''}\ar@/_2pc/[l]_{\ell''}
    }.
    \]
    According to \cite{psarou} (cf, proof of Proposition 10.2 and 
    equations therein), we write
    \begin{align}
    \left(\Mod J_I\right)^\flat_{\widetilde{\cl U}} = 
    \left\{X\in \calD(\Gamma_I)\mid \right. & H^0_{\Mod J_I}(X)\in \widetilde{\cl U}, H^{-1}_{\Mod J_I}(X)\in \widetilde{\cl U}^\perp,\notag\\
    &\left. H^i_{\Mod J_I}(X)=0,\ i\neq 0,-1\right\}, \label{e1}
    \\
    \left(\Mod J\right)^\flat_{\widetilde{\cl X}} = 
    \left\{X\in \calD(\Gamma)\mid\right. &H^0_{\Mod J}(X)\in \cl X, H^{-1}_{\Mod J}(X)\in \widetilde{\cl X}^\perp, \notag \\ 
    &\left. H^i_{\Mod eJ e}(X)=0,\ i\neq 0,-1\right\},~\mbox{and}\label{e2}\\
    \left(\Mod eJ e\right)^\flat_{\widetilde\torsion} = 
    \left\{X\in \calD(\Gamma)\mid\right. &H^0_{\Mod eJ e}(X)\in \widetilde\torsion, H^{-1}_{\Mod eJ e}(X)\in \widetilde\torsion^\perp, \notag\\ 
    &\left. H^i_{\Mod eJ e}(X)=0,\ i\neq 0,-1\right\}.\label{e3}
    \end{align}
    Since the cohomology functors appearing in 
    \eqref{e1}, \eqref{e2}, and \eqref{e3} 
    restrict to the perfectly valued categories by Theorem \ref{rmk_Hrestrict}, then 
    \begin{gather*}
    \left(\Mod J_I\right)^\flat_{\widetilde{\cl U}} \cap \pvd \Gamma_I = \left(\modules J_I\right)^\flat_{\cl U}, \\
    \left(\Mod J\right)^\flat_{\widetilde{\cl X}} \cap \pvd \Gamma = \left(\modules J\right)^\flat_{\cl X}, \text{ and}\\
    \left(\Mod eJ e\right)^\flat_{\widetilde\torsion} \cap \pvd e\Gamma e = \left(\modules eJ e \right)^\flat_{\torsion},
    \end{gather*} 
     where $\cl X = \widetilde{\cl X}\cap\modules J$. For the thesis to follow, 
     we need to know that $\iota''$ and $\pi''$ descend to the 
     intersections. This is true because they are the restrictions of 
     $\iota$ and $\pi$ to $\pvd \Gamma_I$ and $\pvd\Gamma$ respectively, see 
     \eqref{recoll_hearts}. Boundedness of the resulting t-structures 
     follows from HRS tilting theory and 
     \cite[Proposition\,2.20]{antieau}.
    \end{proof}
It follows that tilting at a torsion pair a heart 
$\ol{\cl H}=\cl H / \cl H\cap \cl V$ of $\pvd (e\Gamma e)$ 
from $\cl H\in\EGp(\pvd\Gamma)$, 
always produces another heart of quotient type.
\begin{cor}\label{any_heart_quot} 
Let $\ol{\cl H} \in \EGb(\pvd (e\Gamma e))$ and 
${\ol{\cl H}}^\sharp_f$ be the heart of another bounded 
t-structure in $\pvd(e\Gamma e)$ obtained by tilting $\ol{\cl H}$ at a 
torsion free class $f$. Then ${\ol{\cl H}}^\sharp_f$ 
is of quotient type. \\
More precisely, if 
$\ol{\cl H}=\cl H/\left(\cl H\cap\pvd\Gamma_I\right)$ for 
some $\cl H\in|\EGp(\pvd \Gamma)|$, then there exists 
$\torsionfree\subset \cl H$ such that 
${\ol{\cl H}}^\sharp_f = \cl H' /\left(\cl H' \cap\pvd\Gamma_I\right)$ 
for $\cl H'= \mu^\sharp_\torsionfree\cl H$.
\end{cor}
\begin{proof}
    The statement follows from Proposition \ref{tilt_ses} , 
    using the fact that any bounded heart in $\EGb(\pvd(e\Gamma e))$ 
    is $\modules (e' J' e')\subset \pvd(e'\Gamma'e')$ 
    with $\pvd(e'\Gamma'e')\simeq \pvd(e\Gamma e)$, 
    for $\Gamma'=\Gamma(Q',W')$, $J'=H^0\Gamma'$, and the quivers 
    $(Q',W')$ and $(Q,W)$ related by mutations.
\end{proof}

\section{Decorated marked surfaces and collapsing surfaces}
\label{sec_DMS}
We recall some definitions related with decorated marked surfaces 
and the construction of the dual quiver of a triangulation, and 
with the operation of collapsing subsurfaces. In 
Sections \ref{sec_tstructures} and \ref{sec_duality} we will 
restrict to a Ginzburg algebra $\Gamma$ of a quiver  with potential 
from a decorated marked surface. 
\subsection{Decorated marked surfaces and exchange graphs}\label{subsec_cate_from_surf}
A \emph{decorated marked surface (DMS)} $\surfo$ \emph{(without 
puntures)} is determined by 
the following data:
\begin{itemize}
  \item a compact genus $g\geq 0$ surface $\surf$,
  \item a set $\mathbf{M}\ne\emptyset$ of \emph{marked points} on the 
  boundary $\partial\surf$ of $\surf$, and
  \item a set $\Tri$ of \emph{decorations}, i.e., points in the 
  interior $\surf^\circ$ of $\surf$,
\end{itemize}
such that each boundary component of $\partial\surf$ contains at 
least one marked point, and
\begin{equation}\label{eq:aleph}
  |\Tri|=4g+2|\partial\surfo|+|\mathbf{M}|-4\ge1,
\end{equation}
where $|\partial\surfo|$ is the number of boundary components.
\par 
An open arc on $\surfo$ is an isotopy class of a curve whose 
interior is in $\surf^\circ\setminus\Tri$ and whose endpoints 
are marked points. 
\par
A \emph{(decorated) triangulation} $\TT$ of $\surfo$ is a 
collection of open 
arcs that, together with boundary segments defined by 
consecutive 
marked points, subdivides $\surfo$ into once-decorated 
triangles, i.e., triangles encircling exactly one decoration. 
The open arcs of $\TT$ are usually called 
\emph{internal edges} of the triangulation, and once-decorated 
triangles are said \emph{internal triangles}. 
The assumptions on $\surfo$ ensure that no decorated 
triangulation admits \emph{self-folded triangles}.
There is an operation of forward flip on triangulations, 
which changes an arc of a triangulation 
by moving its endpoints 
along the boundary of the quadrilateral containing it, 
without crossing $\Tri$. We refer 
to \cite{LabaFragQP1,LabaFragQP2} for 
the classical notion of flip of edges of a triangulation and 
to \cite{qiubraid16} for the decorated version. We denote by 
$\EG(\surfo)$ the exchange graph of triangulations 
of $\surfo$,
whose vertices are triangulations and whose directed edges  are 
forward flips.
\par\medskip
Given a triangulation $\TT$ of $\surfo$, there is an 
associated quiver with potential $(Q_\TT,W_\TT)$,
where
\begin{itemize}
  \item the vertices of $Q_\TT$ are the open arcs in $\TT$,
  \item the arrows of $Q_\TT$ are the (anticlockwise) angle 
  between open arcs in $\TT$, and
  \item $W_\TT$ is the sum of all 3-cycles $W_\TT$, that are 
  the compositions of three open arcs for any triangle 
  of $\TT$ whose edges are all internal edges.
\end{itemize}
\begin{definition}We say that a quiver with potential is 
    of \emph{geometric type}, precisely when it can be 
    defined by this procedure.\end{definition}
Fixing an initial triangulation $\TT_0$,
we let $\EGp(\surfo)$ be the principal component 
of $\EG(\surfo)$ 
containing $\TT_0$. We let moreover $(Q_{\TT_0},W_{\TT_0})$ 
be the quiver 
associated with $\TT_0$, and $\Gamma:=\Gamma_{\TT_0}$ be its Ginzburg 
algebra. 
Key features of $\pvd\left(\Gamma_{\TT_0}\right)$ are the 
correspondences between
\begin{itemize}
    \item simple objects of a heart and internal edges of a 
    triangulation of $\surfo$, and
    \item simple (forward) tilts and (forward) flips.
\end{itemize}
\par

This correspondence was first studied by Labardini Fragoso 
in \cite{LabaFragQP1} for the unpuctured case 
and in \cite{LabaFragQP2} in the presence of \emph{self-folded} 
triangles, and can be expressed more precisely in the 
following theorem including results contained 
in \cite{qiubraid16, Q3}. 

\begin{theorem}\label{thm:QQ}
There is an isomorphism of oriented 
graphs $\EGp(\surfo)\cong\EGp(\pvd\Gamma)$,
sending a triangulation $\TT$ to a heart $\h_\TT$, 
such that 
simples of $\h_\TT$ are naturally parameterized by 
open arcs in $\TT$ or vertices of the associated 
quiver $Q_\TT$. Each directed edge of $\EGp(\surfo)$ corresponding to flipping an internal edge 
of the triangulation matches with a directed edge of $\EGp(\pvd\Gamma)$ corresponding to 
a simple forward tilt of a heart of a bounded t-structure 
in $\pvd(\Gamma)$.
Moreover, the dimension of $\Ext^1$ between simples 
of $\h_\TT$ equals the number of arrows between the 
corresponding vertices in $Q_\TT$.
\end{theorem}
In the setup of Theorem \ref{thm:QQ}, if 
$R$ and $S$ are the simples corresponding to open 
arcs $\beta$ and $\gamma$ in a triangulation $\TT$, 
we have the following easy lemma.
\begin{lemma}[No-arrow/$\Ext^1$-vanishing Condition]
    \label{lem:no arrow}
If $\beta$, $\gamma$ are not counter-clockwise consecutive 
internal edges of an internal triangle of $\TT$, i.e., if 
there is no arrow in $(Q_\TT,W_\TT)$ from $\beta$ to $\gamma$, 
then $\Ext^1(R, S)=0$.
\end{lemma}

\subsection{Collapsing subsurfaces}\label{subsec_collapse}
Let $\Gamma=\Gamma(Q_{\TT_0},W_{\TT_0})$ be the Ginzburg 
algebra of 
the dual quiver of a triangulation $\TT_0$ of a decorated marked 
surface (without punctures) $\surfo$ 
and $e=\sum_{j\in I^c}e_j$ an 
idempotent as in the previous sections. 
The category $\pvd(e\Gamma e)$ admits a combinatorial 
description in terms of a \emph{weighted decorated 
marked surface} obtained by \emph{collapsing a subsurface 
of $\surfo$} roughtly associated with the subset $I$. 
This perspective is considered in \cite{BMQS} and 
briefly summarized here.
\par
Let $\Sigma$ be a subsurface 
cut out of $\surf$ along simple 
closed curves of $\surf\setminus\Tri$. It can be realized as a 
decorated marked surface $\Sigma_\Tri$ by taking 
its closure $\ol\Sigma$ and adding marked points on 
its (new) boundary components. 
For a fixed triangulation $\TT$ of $\surfo$, assume that we 
cut $\Sigma_\Tri$ in a way that the restriction of $\TT$ 
to $\Sigma_\Tri$ identifies a subquiver $(Q_I,W_I)$ 
of $(Q_\TT,W_\TT)$.
The details of this procedure 
can be found in \cite[\S~5.1]{BMQS}.
Then one can \emph{collapse $\Sigma_\Tri$ in $\surfo$} to 
get a \emph{weighted decorated marked surface (wDMS)} $\colsur$, 
defined as follows and diagrammatically represented by
\be
\begin{tikzcd}
    \Sigma_\Tri \ar[r,hookrightarrow] & 
    \surfo \ar[r,rightsquigarrow] & \colsur\, .
\end{tikzcd}
\ee
\begin{figure}[ht]\centering
	\makebox[\textwidth][c]{
		\begin{tikzpicture}[scale=.5]
			\draw[thick,fill=cyan!10](-4,-4)to[out=180,in=-90](-7,-1)to[out=90,in=180](-3,4)to[out=0,in=180](2,2)to[out=0,in=180](5,3)to[out=0,in=90](7,1)to[out=-90,in=0](3,-3)to[out=180,in=0](0,-2)to[out=180,in=0](-4,-4);
			\draw[thick](-5.5,-1)to[bend right=60](-4,-1)(-5.25,-1.2)to[bend left=40](-4.25,-1.2) (-5,2)to[bend right=60](-3.5,2)(-4.75,1.8)to[bend left=40](-3.75,1.8);
			\draw[Emerald,thick,rotate=-5,fill=green!10](-2,1) ellipse (.5 and 1) (-2,1.5)\ww(-2,.5)\ww;
			\draw[Emerald,thick,rotate=-70,fill=green!10](.5,.5) ellipse (.75 and 1.5) (.5,.75)\ww(.5,1.3)\ww(.5,.25)\ww(.5,-.3)\ww;
            \draw (3,-1)\ww;
			\draw[Emerald,thick,fill=green!10](5,1) circle (1) (5,1.4)\ww (4.6,0.6)\ww (5.4,.6)\ww;
            \draw[black,thick,fill=white](-3.5,-2.5)  circle (.6);
			\draw[font=\tiny]
			(-2.3,2.7)node{$\Sigma^1$}
			(-1,-1.5)node{$\Sigma^2$}
			(5,-.5)node{$\Sigma^3$};
            \draw[font=\tiny] (-4.1,-2.5)node{$\bullet$}
                (-2.9,-2.5)node{$\bullet$}
                (-3.5,-3.1)node{$\bullet$}
                (-3.5,-1.9)node{$\bullet$};
			\draw(8,0)node{$\rightsquigarrow$}(3,-4)node[font=\large]{$\subsur_\Tri\subset\surfo$};
			\begin{scope}[shift={(16,0)}]
				\draw[thick,fill=cyan!10](-4,-4)to[out=180,in=-90](-7,-1)to[out=90,in=180](-3,4)to[out=0,in=180](2,2)to[out=0,in=180](5,3)to[out=0,in=90](7,1)to[out=-90,in=0](3,-3)to[out=180,in=0](0,-2)to[out=180,in=0](-4,-4);
				\draw[thick](-5.5,-1)to[bend right=60](-4,-1)(-5.25,-1.2)to[bend left=40](-4.25,-1.2) (-5,2)to[bend right=60](-3.5,2)(-4.75,1.8)to[bend left=40](-3.75,1.8);
				\draw[red,fill=white](3,-1) \ww node[above]{$_{w_4=1}$};
				\draw[red,thick,fill=white](-2,1) circle(.15) node[above]{$_{w_1=2}$};
				\draw[red,very thick,fill=white](5,1) circle(.2) node[above]{$_{w_3=3}$};
				\draw[red,ultra thick,fill=white](0,-1) circle(.25)
				(0,-.8) node[above]{$_{w_2=4}$};
				\draw(3,-4)node[font=\large]{$\colsur$};
                \draw[black,thick,fill=white](-3.5,-2.5) circle (.6);
                \draw[font=\tiny] (-4.1,-2.5)node{$\bullet$}
                (-2.9,-2.5)node{$\bullet$}
                (-3.5,-3.1)node{$\bullet$}
                (-3.5,-1.9)node{$\bullet$};
            \end{scope}
		\end{tikzpicture}
	}
\caption{Example: collapse of a subsurface $\Sigma$ with three 
connected components of genus $0$ from a surface $\surfo$ of 
genus $2$ with $10$ decorations and four marked points on the 
boundary component. 
}\label{fig:collision}
\end{figure}
The underlying surface of $\colsur$ is the compactification of 
$\surf\setminus\ol{\Sigma}$ by points. 
The new points, together with the decorations 
in $\surf_\Tri\setminus{\Sigma}_\Tri$, 
are taken as the decorations of $\colsur$, 
denoted again $\Tri$ with abuse of notation. The marked points 
of $\colsur$ are inherited from $\surfo$ and coincide with those 
in $\mathbf{M}$. The resulting DMS 
is enriched with a map $\w\colon\Tri\to\mathbb{Z}_{\ge-1}$, 
called a \emph{weight function}, such that the 
condition \eqref{eq:aleph} is replaced by
\begin{equation}\label{eq:aleph2}
  \sum_{Z\in\Tri}\w(Z)=4g+2|\partial\surfo|+|\mathbf{M}|-4\ge1.
 \end{equation}
Under 
suitable conditions on $\surfo$ and $\Sigma_\Tri$, the weight 
function on $\colsur$ takes values in $\Z_{\geq 1}$. 
Very 
roughtly speaking, $\TT$ induces a (decorated) \lq\lq partial 
triangulation\rq\rq\ and a \lq\lq mixed-angulation\rq\rq\ 
of $\colsur$, that is a tiling of $\colsur$ into 
once-decorated polygons. A notion of forward flip for a 
mixed-angulation is available, that is used to introduce (\cite[{Section 5}]{BMQS})
an exchange graph $\EGb(\colsur)$ of mixed-angulations. 
We refer to \cite{BMQS} for more details, and in particular 
for the description 
of $\pvd(e\Gamma e)\simeq \pvd(\Gamma)/\pvd(\Gamma_I)$ as a 
triangulated category associated with $\colsur$. 
\par 
Recall the 
definition of $\EGb(\pvd(e\Gamma e))$ in Notation \ref{EG_Gamma}. 
Similarly to 
Theorem \ref{thm:QQ} we have the following 
Theorem \ref{thm_BMQS},\,1), 
depending on the choice of the initial partial 
triangulation and its refinement to a 
triangulation $\TT_0$ of $\surfo$. For part 2), 
we let $m$ be the cardinality of $I^c=Q_0\setminus I$. 
\begin{theorem}[{\cite[Theorem 6.9 and Proposition 5.13]{BMQS}}]\label{thm_BMQS} Set $\Gamma=\Gamma(Q_{\TT_0},W_{\TT_0})$ as above and $J=J(Q_{\TT_0},W_{\TT_0})$, for a fixed refining triangulation $\TT_0$, and 
let $\EGp(\pvd \Gamma)$ be the principal 
component of $\EG(\pvd \Gamma)$ containing $\modules J$. Then \begin{enumerate}
\item There is an isomorphism of graphs
\[
\EGb(\colsur) \simeq \EGb(\pvd(e\Gamma e)),\]
sending a partial triangulation $\mathbb{A}$ of $\colsur$ to 
a finite heart of a bounded t-structure of $\pvd(e\Gamma e)$ 
whose simples (up to isomorphism) correspond to internal edges 
of $\mathbb{A}$, 
and associating to any (forward) flip of edges the tilting 
of the corresponding heart at 
a simple object.
\item  The graph $\EGb(\pvd (e\Gamma e))$ 
is $(m,m)$-regular, i.e., each vertex has exactly $m$ outgoing 
and $m$ incoming directed edges, and consists of 
connected components of $\EG(\pvd e \Gamma e)$.
    \end{enumerate}
\end{theorem}

\section{Hearts of $\pvd(e\Gamma e)$}
\label{sec_tstructures}


We continue to use the terminology from the prevoius 
sections: $(Q,W)$ is a finite non-degenerate quiver with potential, 
with set of vertices $Q^0=I\sqcup I^c$, 
$\Gamma=\Gamma(Q,W)$ is its Ginzburg algebra, $e=\sum_{j\in I^c}e_j$ 
is a fixed idempotent associated with the complement of 
the subquiver $(Q_I,W_I)$. The exchange 
graphs $\EGp(\pvd\Gamma)$ and $\EGb(\pvd(e\Gamma e))$ are defined in 
Notation \ref{EG_Gamma}. Corollary \ref{any_heart_quot} tells us that 
all hearts obtained by 
titling at \emph{any} torsion pair a finite heart 
in $\EGb(\pvd(e\Gamma e))$ are of 
quotient type, i.e., they are induced by bounded 
t-structures in $\pvd\Gamma$, not necessarily finite.
\par \medskip
In this section, we assume that $(Q,W)$ is a 
quiver with potential \emph{of geometric type}, and we 
study $\EGb(\pvd(e\Gamma e))$ and \emph{simple} tilts, i.e., tilts at 
simple objects. The main result is Theorem \ref{quot_graph}, 
that shows that $\EGb(\pvd(e\Gamma e))$ can be read off 
from $\EGb(\pvd\Gamma)$. It relies in the procedure described in 
the proof of Proposition \ref{ExConvRep} for which the assumption that 
$(Q,W)$ is of geometric type is crucial. 
The content of this section is complementary to the study 
of the exchange graph $\EGb(\pvd(e\Gamma e))$ in relation 
with the exchange graph of $\colsur$ and $\surfo$, recalled in 
Theorem \ref{thm_BMQS}.

\subsection{Lifting simple tilts}\label{subsec_lifting_simple_tilts}
The goal of this subsection is to give a constructive 
explanation 
of the fact that we can ``lift'' any simple tilt 
of $\ol{\cl H}$ 
in $\EGb(\pvd (e\Gamma e))$ to a simple tilt 
in $\EGp(\pvd \Gamma)$ 
(Proposition \ref{ExConvRep}), thanks 
to the geometric description. 
We do this by 
showing that, for any simple $\ol{S}\in\ol{\calH}$, we can 
find a 
``convenient'' heart $\cl H$ in $\EGp(\pvd \Gamma)$ 
whose quotient 
is $\ol{\cl H}$. 
Being ``convenient'' with respect to $S$ for a 
heart in fact means to satisfy the hypotheses of 
Proposition \ref{prop_lift_simple_tilt} below. 
A similar 
argument appeared in \cite{BMQS}, using refinements of more 
general 
\lq\lq mixed-angulations\rq\rq\ of a wDMS. 
Proposition \ref{cor_lifting_simplefixed} can later be compared 
with Lemma \ref{thm_J}.
\par\medskip
We denote
\[\cl V:= \pvd\Gamma_I,\] 
and we fix a finite heart $\calH \in \EGp(\pvd \Gamma)$ 
compatible with $\cl V$, i.e., such that 
$$\cl K^{\cl H}:=\cl H\cap\cl V$$ is a Serre 
subcategory of $\calH$. Then the following 
proposition holds. 

\begin{prop}\label{prop_lift_simple_tilt}
Suppose that 
$S\in\calH\setminus(\cl H\cap\cl V)$ is a simple object 
such that
\begin{equation}\label{no_arrow_eq2} 
    \dim \Ext^1_{\cl H}(T,S)=0 \text{ for any } 
    T\in \cl H\cap\cl V.
\end{equation}
Then $\mu_S^\sharp\calH$ is also compatible 
with $\cl V$ and
$\mu_S^\sharp\ol{\calH}\simeq \ol{\mu_S^\sharp\calH}$.
\end{prop}
\begin{proof}
It is an application of \cite[Proposition 6.8]{BMQS}, since 
any simple object in a finite heart $\cl H$ of $\pvd\Gamma$ is 
rigid.
\end{proof}
\par
We assume now that $(Q,W)$ is of geometric type for some 
decorated marked surface $\surfo$, and that the finite heart $\cl H$ 
is associated with a triangulation $\TT$ of $\surfo$
$$\cl H=\modules J(Q_\TT,W_\TT).$$
Recall that 
its simple objects can be identified with a 
subset of the edges 
of the triangulation $\TT$. 
The combinatorial counterpart of 
equation \eqref{no_arrow_eq2} above is given in 
Lemma \ref{lem:no arrow}. Moreover, we can use the geometric model 
to prove the following two propositions.

\begin{prop}\label{ExConvRep}
	Let $S \in \calH\setminus \cl K^{\cl H}$ be a 
    simple object. Then there exists a finite sequence of 
    simple tilts at objects in $\cl V=\pvd\Gamma_I$ such that the 
    resulting heart is $\mu^\sharp_\torsionfree\calH$, for 
    some torsion-free class 
    $\torsionfree\subset \cl K^{\cl H}$, and 
    satisfies 
    \begin{itemize}
        \item[i)] $S\in\mu^\sharp_\torsionfree\calH$ and it 
        is simple,
        \item[ii)]$\dim \Ext^1(T,S)=0$ for any 
        $T\in \mu^\sharp_\torsionfree\calH \cap \cl V$, 
        and 
        \item[iii)] the quotient abelian categories 
        $\mu^\sharp_\torsionfree\calH/
        \left(\mu^\sharp_\torsionfree\calH \cap \cl V\right)$ 
        and $\calH/\cl K^{\cl H}$ coincide:
        \[\ol{\mu^\sharp_\torsionfree\calH}=\ol{\calH}.\]
    \end{itemize}
\end{prop}
\par
\begin{proof}
We consider the triangulation~$\TT$ of $\surfo$ induced 
by~$\calH$. We call $S$ the edge corresponding to the simple 
object $S$ in $\calH\setminus\cl K^{\cl H}$, and 
color in
blue (resp. black) the edges corresponding to the simples 
of $\calH$ in 
$\calH\cap\cl V$ 
(resp. $\calH\setminus\cl K^{\cl H}$). Black edges 
define a 
tiling of $\surfo$ into polygons, and blue edges refine this 
tiling to the triangulation $\TT$. 
Without loss of generality, we may restrict our attention to 
two
adjacent polygons $P_k$ and $P_\ell$, with $k$ and $\ell$ 
vertices 
respectively, sharing $S$ as common edge. We denote by $d_k$ 
(resp.~$d_\ell$) the diagonal of $P_k$ (resp.~$P_\ell$) which, 
together with~$S$ and its counter-clockwise neighbouring 
edge of the 
polygon, form a triangle. 
Examples of such a configuration are displayed in 
Figure~\ref{cap:klgons}. 
The common edge $S$ is displayed in red, and the 
diagonals $d_k,d_\ell$ are displayed as black dotted lines.
\begin{figure}[ht]
	\begin{tikzpicture}[scale=0.8,cap=round,>=latex]		
\coordinate (a) at (45+22.5:2cm);
		\begin{scope}[shift={(3.1734cm, 0 cm)}]
		
		\draw[thick] (30:1.5307cm) -- (90:1.5307cm);
		\draw[thick] (90:1.5307cm) -- (150:1.5307cm);
		\draw[thick] (210:1.5307cm) -- (270:1.5307cm);
		\draw[thick] (270:1.5307cm) -- (330:1.5307cm);
		\draw[thick] (330:1.5307cm) -- (390:1.5307cm);
        \coordinate (b) at (270:1.5307cm);
		
		\foreach \x in {30,90,...,330} {
			\filldraw[black] (\x:1.5307cm) circle(0.5pt);
		}	
        \draw[thick, red, dotted] (a)--(b)  node [midway, fill=white] {};
        \draw[thick, red] (150:1.5307cm) -- (210:1.5307cm) node [midway, fill=white] {$S$};
		\draw[dotted, thick] (270:1.5cm) -- (150:1.5307cm);
		\draw[blue] (210:1.5307cm) -- (30:1.5307cm);
		\draw[blue] (90:1.5307cm) -- (210:1.5307cm);
        \draw[blue] (30:1.5307cm) -- (270:1.5307cm);

		\end{scope}

        \draw[thick] (0+22.5:2cm) -- (45+22.5:2cm);
		\draw[thick] (45+22.5:2cm) -- (90+22.5:2cm);
		\draw[thick] (90+22.5:2cm) -- (135+22.5:2cm);
		\draw[thick] (135+22.5:2cm) -- (180+22.5:2cm);
		\draw[thick] (180+22.5:2cm) -- (225+22.5:2cm);
		\draw[thick] (225+22.5:2cm) -- (270+22.5:2cm);
		\draw[thick] (270+22.5:2cm) -- (315+22.5:2cm);
		
		\foreach \x in {22.5,67.5,...,347.5} {
			\filldraw[black] (\x:2cm) circle(0.5pt);
		}
	
		\draw[thick, dotted] (315+22.5:2cm) -- (45+22.5:2cm);
		\draw[blue] (0+22.5:2cm) -- (135+22.5:2cm);
		\draw[blue] (45+22.5:2cm) -- (135+22.5:2cm);
		\draw[blue] (0+22.5:2cm) -- (180+22.5:2cm);
		\draw[blue] (0+22.5:2cm) -- (225+22.5:2cm);
		\draw[blue] (225+22.5:2cm) -- (315+22.5:2cm);
    
    \begin{scope}[shift={(8 cm, 0 cm)}]
        \draw[thick] (45:1.6cm) -- (135:1.6cm);
        \draw[thick, red] (135:1.6cm) -- (225:1.6cm) node [midway, fill=white] {$S$};
        \draw[thick] (225:1.6cm)--(315:1.6cm);
        \draw[thick, red] (315:1.6cm)--(405:1.6cm)  node [midway, fill=white] {$S$};
        \draw[blue] (135+90:1.6cm)--(315+90:1.6cm);
        \draw[thick,dotted] (45+90:1.6cm)-- (225+90:1.6cm);
        \draw[thick, dotted, red] (357+90:1.1cm)--(225+90:1.6cm);
        \draw[thick, dotted, red] (177+90:1.1cm)--(45+90:1.6cm);
    \end{scope}
	\end{tikzpicture}
	\caption{On the left, an example of $P_k\cup P_\ell$ 
    with $k=8$ and $\ell=6$. On the right, a polygon with 
    two edges identified.} \label{cap:klgons}
\end{figure}
\par
In order to get a torsion-free class $\torsionfree$ as 
required in the 
lemma, we construct a suitable triangulation 
of $P_k\cup P_\ell$ by 
iteratively flipping some blue edges 
inside $P_k\cup P_\ell$. 
Using the correspondence between flips and simple tilts, this 
guaranties that the resulting quotient category does not 
change, 
i.e., condition iii). Since flips will occur at most once 
at each 
edge, the procedure will define a torsion-free 
class $\torsionfree \subset \cl K^{\cl H}$. 
\par We note 
that the heart corresponding to a triangulation of $\surfo$ 
satisfies condition ii) if and only if it contains $d_k$ 
and $d_\ell$ (this is the no-arrow condition of 
Lemma \ref{lem:no arrow} and the construction of the 
quiver from the triangulation $\TT$). 
\par
Consider the finitely many edges $e_i$ that intersect, out 
of the 
extremes, the diagonals $d_k$ and $d_\ell$, and label them 
counter-clockwise. 
We mutate at these edges in lexicographic order. The 
diagonal $d_k$ 
subdivides the $k$-gon into a triangle and a $(k-1)$-gon 
sharing 
$d_k$ as an edge and its extremes as vertices. Any $e_i$ 
intersecting $d_k$ has the characterizing vertex of the 
triangle as 
an endpoint. Flipping $e_i$ gives us a new edge, with two 
different 
endpoints, that lies completely in the $(k-1)$-gon. 
Once all crossing edges have been flipped exactly once, 
the 
resulting triangulation has $d_k$ among its edges. The same 
procedure will be applied to the other polygon $P_\ell$. Note 
that 
the procedure works also when two black edges are identified, 
or, 
e.g., in the example displayed on the right of 
Figure \ref{cap:klgons}, where the black dashed edge is the 
resulting edge after flipping the blue edge.
\par
Let $S_i$ be the simple object of $\calH$ corresponding 
to $e_i$. 
Saying $\calH_0=\calH$ and 
$\calH_i=\mu_{S_i}^\sharp\calH_{i-1}$ the 
result of a single simple tilt, we note that at each 
step, $S$ stays 
simple in $\calH_i$, hence in $\mu^\sharp_\torsionfree\calH$, 
which is 
condition ii). 
\end{proof}

\begin{prop}\label{cor_lifting_simplefixed} 
    Let $\cl H\in \EGp(\pvd\Gamma)$ and $S \in \calH\setminus
    \cl K^{\cl H}$ 
    as in Proposition \ref{prop_lift_simple_tilt}. 
    Then, there exists $\cl G\in \EGp(\pvd\Gamma)$ compatible 
    with $\pvd\Gamma_I$ such that 
    \begin{itemize}
    \item[i)] $\cl G\supset \cl K^{\cl H}$, and
    \item[ii)]  $\ol{\cl G}= \mu^\sharp_{\ol{S}}\ol{\cl H}$.
    \end{itemize} 
\end{prop}
\begin{proof}
Let $\torsionfree \subset \cl K^{\cl H}$ be 
the torsion free class constructed in the proof of 
Proposition \ref{ExConvRep}
and $\torsion$ its corresponding torsion class in $\cl H$. 
The heart $\cl H$ is compatible with $\pvd\Gamma_I$ by 
hypotesis, while the hearts 
\[\cl G':= \mu_\torsionfree^\sharp\cl H=
\torsionfree[1]*\torsion, \text{ and}\quad\cl G'' := \mu_S^\sharp\cl G'\]
are compatible with $\pvd\Gamma_I$, by Lemma \ref{quot_tilted_hearts} 
because $\torsionfree\subset 
\pvd\Gamma_I$, and by Proposition \ref{prop_lift_simple_tilt}, respectively.
This means that the corresponding bounded 
t-structures in $\pvd\Gamma$ induce bounded t-structures 
in $\pvd\Gamma(e\Gamma e)$, and, in 
particular, that $\cl K':=\cl G'\cap\pvd\Gamma_I\subset \cl G'$ and 
$\cl K'':=\pvd\Gamma_I \cap \cl G''\subset \cl G''$ are 
Serre subcategories of the hearts.
We 
can therefore consider the restricted bounded t-structures in 
$\pvd\Gamma_I$ as well as the restricted torsion pairs, and 
trace them. We have that
\begin{itemize}
\item the torsion pair $(\torsionfree,\torsion)$ in $\cl H$ induces the 
torsion pair $(\torsionfree, \torsion\cap \cl K^{\cl H})$ 
in $\cl K$, with 
\[\mu_\torsionfree^\sharp\cl K^{\cl H} = 
\torsionfree[1]*(\torsion\cap\cl K^{\cl H}) = \cl K'.\]
\end{itemize}
Since $S\in \cl G'$ is simple and not contained in $\cl K'$, 
then $\cl K'\subset {^\perp S}$. Therefore\begin{itemize}
\item the torsion pair $(\operatorname{filt}(S), {^\perp S})$ 
in $\cl H'$ induces the torsion pair $(\cl K',0)$ in $\cl K'$, 
so that 
\[\cl K'' = \mu_S^\sharp \cl K' 
= \cl K'.\]
\end{itemize}
Last, the torsion pair 
$(\torsionfree[1], {\torsionfree[1]}^{\perp_{\cl G''}})$ restricts to 
$(\torsionfree[1], \torsion\cap\cl K^{\cl H})$ 
in $\cl K''\subset\cl G''$. 
We can therefore perform a \emph{backward} tilt 
at $\torsionfree[1]$ of $\cl G''$ to 
get 
\[\cl G:=\mu^\flat_{\torsionfree[1]}\cl G'',\] 
compatible with $\pvd\Gamma_I$, and containing 
$\mu^\flat_{\torsionfree[1]}\cl K'' = 
\mu^\flat_{\torsionfree[1]}\mu^\sharp_\torsionfree\cl K^{\cl H}
=\cl K^{\cl H}$.
\end{proof}

\begin{rmk}
The dual and the corresponding analogues of 
Propositions \ref{prop_lift_simple_tilt}, \ref{ExConvRep}, 
\ref{cor_lifting_simplefixed} hold for backward tilting.
\end{rmk}

\subsection{The exchange graph $\EGb(\pvd(e\Gamma e))$}
We first recall the general definition of quotient of a graph, 
and then show that the graph $\EGb(\pvd(e\Gamma e))$ is isomorphic 
to a quotient of $\EGp(\pvd\Gamma, \pvd\Gamma_I)$.
\begin{definition}
    Let $\operatorname{G}=(V,E)$ be a (oriented) graph with 
    set of vertices $V$ and (directed) edges $E$. Let $E'\subset E$. We say that $E'$ induces the partition $P=\{P_1,\dots, P_t\}$ of $V$, where, for any pair $i\neq j$, the vertices $V_i,V_j$ are in the same subset $P_r$ if and only if there is a path of (directed) edges in $E'$ connecting them. 
    
    The 
    \emph{quotient of $\operatorname{G}$ with respect to $E'$} is 
    defined as the (oriented) graph $G/E'$ whose set of vertices 
    coincides with $P$, the partition of $V$ induced by $E'$, 
    and such that there is a (directed) edge 
    connecting $P_r$ and (to) $P_s$  
    for any (directed) edge in $E \setminus E'$ connecting a 
    vertex of $V$ in $P_r$ and one in $P_s$.
\end{definition}

\begin{theorem}\label{quot_graph} 
The graph $\EGb(\pvd (e\Gamma e))$ is isomorphic to the 
quotient 
\[\EGb(\pvd (e\Gamma e))\simeq \EGp(\pvd \Gamma,\pvd \Gamma_I)/E_I,\]
of $\EGp(\pvd \Gamma, \pvd \Gamma_I)$ with respect to the 
set $E_I$ of directed edges labelled by an object in $\pvd \Gamma_I$. 
\\
It consists of connected components of $\EG(\pvd(e\Gamma e))$.
\end{theorem}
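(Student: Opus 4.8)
The plan is to exhibit an explicit graph morphism $\Phi\colon \EGp(\pvd\Gamma,\pvd\Gamma_I) \to \EGb(\pvd(e\Gamma e))$, sending a $\pvd\Gamma_I$-compatible finite heart $\cl H$ to its quotient $\ol{\cl H}=\cl H/(\cl H\cap\pvd\Gamma_I)$, and to show that it induces an isomorphism after collapsing the edges in $E_I$. First I would check that $\Phi$ is well-defined on vertices: by Theorem \ref{rmk_Hrestrict} and Proposition \ref{prop_antieau}, any $\cl H\in\EGp(\pvd\Gamma,\pvd\Gamma_I)$ descends to a bounded heart $\ol{\cl H}$ of $\pvd(e\Gamma e)$, finite because $\Sim\ol{\cl H}$ is in bijection with $\Sim\cl H\setminus\Sim(\cl H\cap\pvd\Gamma_I)$; and by definition of $\EGb$ such $\ol{\cl H}$ lands in $\EGb(\pvd(e\Gamma e))$. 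Surjectivity on vertices is immediate from the definition of $\EGb(\pvd(e\Gamma e))$, since every vertex there is by construction a quotient of some $\cl H\in\EGp(\pvd\Gamma)$, and such an $\cl H$ is automatically $\pvd\Gamma_I$-compatible (its intersection with $\pvd\Gamma_I$ is the Serre subcategory generated by the simples lying in $\pvd\Gamma_I$).

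Next I would analyze edges. An edge $\cl H\to\mu_S^\sharp\cl H$ of $\EGp(\pvd\Gamma,\pvd\Gamma_I)$ either has its label $S$ in $\pvd\Gamma_I$ — in which case Lemma \ref{quot_tilted_hearts} gives $\ol{\cl H}\simeq\ol{\mu_S^\sharp\cl H}$, so the edge lies in $E_I$ and is contracted by $\Phi$ — or $S\notin\pvd\Gamma_I$. In the latter case one must check two things: that the target $\mu_S^\sharp\cl H$ is again $\pvd\Gamma_I$-compatible (so that it is a vertex of the source graph), and that $\Phi$ sends the edge to a genuine edge $\ol{\cl H}\to\mu_{\ol S}^\sharp\ol{\cl H}$ of $\EGb(\pvd(e\Gamma e))$. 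Both hold once the no-arrow hypothesis \eqref{no_arrow_eq2} is satisfied, via Proposition \ref{prop_lift_simple_tilt}, which gives precisely $\mu_S^\sharp\ol{\cl H}\simeq\ol{\mu_S^\sharp\cl H}$ and preservation of compatibility. The subtlety is that \eqref{no_arrow_eq2} need not hold for the given representative $\cl H$; this is exactly where Proposition \ref{ExConvRep} (and hence the geometric-type assumption, through Lemma \ref{lem:no arrow}) enters: after a finite sequence of simple tilts at objects of $\pvd\Gamma_I$ — i.e.\ moving within the $E_I$-equivalence class of $\cl H$ — we reach a representative $\mu^\sharp_\torsionfree\cl H$ for which $S$ is simple, condition \eqref{no_arrow_eq2} holds, and the quotient heart is unchanged. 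Thus on the quotient graph $\EGp(\pvd\Gamma,\pvd\Gamma_I)/E_I$ every outgoing edge at a class $[\cl H]$ corresponding to a simple $\ol S\in\ol{\cl H}$ is well-defined and is sent bijectively to the corresponding edge of $\EGb(\pvd(e\Gamma e))$.

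Finally I would prove that the induced map $\bar\Phi$ on the quotient graph is an isomorphism. Injectivity on vertices: if $\ol{\cl H}\simeq\ol{\cl H'}$ with both $\cl H,\cl H'\in\EGp(\pvd\Gamma,\pvd\Gamma_I)$, one argues that $\cl H$ and $\cl H'$ differ by a sequence of simple tilts at objects of $\pvd\Gamma_I$, hence lie in the same $E_I$-class — this is the content of (the forward-tilt analogue of) Theorem \ref{any_heart_quot} restricted to finite hearts, or can be extracted directly from the recollement \eqref{recoll_hearts} together with the fact that a heart of $\pvd\Gamma$ is determined by its image in $\pvd(e\Gamma e)$ and its restriction to $\pvd\Gamma_I$, the latter ranging over $\EGp(\pvd\Gamma_I)$ which is connected. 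Bijectivity on edges then follows from the local count: by Theorem \ref{thm_eg_eGe} the graph $\EGb(\pvd(e\Gamma e))$ is $(m,m)$-regular with $m=|I^c|$, and at each class $[\cl H]$ the quotient graph has exactly $m$ outgoing edges (one per simple of $\ol{\cl H}$, the edges labelled by simples in $\pvd\Gamma_I$ having been contracted), matching under $\bar\Phi$; a symmetric argument handles incoming edges. The main obstacle I anticipate is the injectivity-on-vertices step — showing two $\pvd\Gamma_I$-compatible finite hearts with the same quotient are connected by $\pvd\Gamma_I$-labelled tilts — since this requires controlling the fibres of the localization over $\EGp$, and one must be careful that all intermediate hearts in the connecting path remain in the principal component $\EGp(\pvd\Gamma)$ and remain compatible; the geometric realization via flips of triangulations of $\surfo$, refining a mixed-angulation of $\colsur$, is what makes this tractable.
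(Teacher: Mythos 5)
Your proposal follows essentially the same strategy as the paper's proof: descend the quotient map $\cl H\mapsto\ol{\cl H}$ to $\EGp(\pvd\Gamma,\pvd\Gamma_I)/E_I$, contract $E_I$-edges via Lemma~\ref{quot_tilted_hearts}, lift simple tilts of $\EGb(\pvd(e\Gamma e))$ via Propositions~\ref{ExConvRep} and~\ref{prop_lift_simple_tilt}, and invoke the no-arrow condition~\eqref{no_arrow_eq2} for the remaining edges. Two points are worth flagging.

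First, your edge analysis places Proposition~\ref{ExConvRep} where the paper instead uses a direct Serre-subcategory argument. For an edge $\cl H_1\to\mu^\sharp_S\cl H_1$ \emph{already inside} $\EGp(\pvd\Gamma,\pvd\Gamma_I)$ with $S\notin\calV$, condition~\eqref{no_arrow_eq2} is automatic: if some simple $S_k\in\cl H_1\cap\calV$ had $\Ext^1(S_k,S)\neq0$, then in $\mu^\sharp_S\cl H_1$ the object $S_k$ would be a non-trivial extension of $S[1]$ by the new simple $F_k$, contradicting that $\mu^\sharp_S\cl H_1\cap\calV$ is Serre. So no change of representative is needed in this direction. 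Proposition~\ref{ExConvRep} (and hence the geometric-type hypothesis) enters only in the \emph{lifting} direction — given an edge $\ol{\cl H}\to\mu^\sharp_{\ol S}\ol{\cl H}$ of $\EGb(\pvd(e\Gamma e))$ and a representative $\cl H$ of $\ol{\cl H}$, one may need to move within the $E_I$-class to make~\eqref{no_arrow_eq2} hold. You do use \ref{ExConvRep} correctly there as well, so this is an organizational rather than a logical issue, but being precise about which direction needs which tool makes the argument cleaner.

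Second, you correctly flag vertex injectivity — that two $\calV$-compatible hearts in $\EGp(\pvd\Gamma,\pvd\Gamma_I)$ with the same quotient must lie in the same $E_I$-class — as the delicate point; the paper's proof is in fact quite terse on this, asserting the vertex bijection after only justifying well-definedness of the descent map. Of your two suggested remedies, the recollement/uniqueness-of-lift route (in the spirit of the citation to Jin used later in Lemma~\ref{le:dual}, that a finite heart $\ol{\cl H}$ lifts \emph{uniquely} to a heart $\cl H$ with a prescribed restriction $\Sim(\cl H\cap\calV)$, combined with connectedness of $\EGp(\pvd\Gamma_I)$) is closer to something that can be made rigorous than the appeal to Theorem~\ref{any_heart_quot}, which addresses tilting rather than fibres of the quotient map. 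Your $(m,m)$-regularity count via Theorem~\ref{thm_eg_eGe} as a consistency check on edge bijectivity is a reasonable addition not present in the paper's proof.
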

\begin{proof} 
We use the ``lifting simple tilts'' procedure of the previous 
subsection. 
First we note that there is a bijection between vertices 
of $\EGb(\pvd (e\Gamma e))$ and vertices 
of 
\[\EGb(\pvd \Gamma,\pvd \Gamma_I)/E_I.\] Indeed, any simple tilt 
at an object in $\calV:=\pvd \Gamma_I$ does not change the induced 
abelian heart in $\pvd(e\Gamma e)$.  
Propositions \ref{ExConvRep} together with \ref{prop_lift_simple_tilt} 
above show that any simple tilt in $\EGb(\pvd (e\Gamma e))$ lifts 
to a simple tilt in $\EGp(\pvd \Gamma)$ and 
in $\EGp(\pvd\Gamma,\pvd\Gamma_I)/E_I$. On the other hand, 
having an edge in $\EGp(\pvd \Gamma,\pvd \Gamma_I)/E_I$ 
connecting $P_i$ and $P_j$ means to have a simple 
tilt $\mu^\sharp_S$ relating two $\calV$-compatible 
hearts $\cl H_1$ and $\cl H_2$
in $\pvd(\Gamma)$ for some $S\in\cl H_1$ not in $\calV$. 
This is possible if and only 
if $S$ satisfies $\dim\Ext^1_{\cl H_1}(T,S)=0$ for any $T$. 
Indeed, 
if $\Sim(\cl H_1)=\{[S],[S_2],\dots, [S_n]\}$ 
and $\Sim(\cl H_1\cap\calV)=\{[S_{k}],\dots, [S_n]\}$, 
the new simples in $\cl H_2$ are $S[1]$, $S_i$ for 
any $i$ such that $\dim\Ext^1_{\cl H_1}(S_i,S)=0$, 
and $F_i=\operatorname{Cone}\left(S_i\to S\otimes\Ext^1(S_i,S)^{\vee}[1]\right)[-1]$ 
for any other $i$. Suppose that $S_k$ is replaced by $F_k$. 
Then $S_k$ in $\cl H_2$ is an extension of $S[1]$ and $F_k$ 
contradicting the hypothesis that $\cl H_2\cap\calV$ is 
Serre in $\cl H_2$.\\
The second part of the statement was proven 
in \cite[{Theorem 6.9}]{BMQS} 
and also follows from Proposition \ref{any_heart_quot}.
\end{proof}

\begin{example}[Figure \ref{EG_A3}]
As an example of Theorem \ref{quot_graph}, we consider the linear quiver 
$Q=A_3=\bullet \to \bullet \to \bullet$ with straight 
orientation and trivial potential $W=0$. Let $\calD=\pvd(Q,W)$ and 
call $X,Y,Z$ the simple dg modules generating the standard 
heart $\calH=\rep A_3$. Figure \ref{EG_A3} 
displays in gray part of the exchange graph of $\calD$, which is taken 
from \cite{kq}, and in blue and red part of the exchange graph 
of $\calD/\calV$, as a quotient graph, for two different choices of 
thick subcategories $\calV\subset \calD$. 
We explain here the notation. We call $X,Y,Z$ the simples of $\calH$ and $U,V,W$ the new simples obtained by forward tilting $\calH$ at $\bra X\ket$, $\bra Y\ket$, $\bra X,U\ket$ respectively. For any object $M\in\calD$, denote by 
$M_i=M[i-1]$, so that $M_1=M$ and $M_2=M[1]$. $\ol M$ denotes the image of $M$ under the appropriate quotient functor. 
Vertices of the graphs are labeled with the simple generators of the corresponding 
bounded heart and edges are labeled by the simple object the tilting is performed at. 
Coloured dotted lines emanate from those hearts that are compatible with 
the subcategory $\calV$, and connect them with their abelian quotient.

To construct (part of) the exchange graphs of the corresponding weighted decorated marked surfaces, that are discs with three decorations of weights $\w=(1,1,2)$, see \cite[Sec.~3.3]{BMQS}. Mixed-angulations for the two cases considered here will not look the same, despite giving rise to similar pentagon-shaped partial exchange graphs. Compare it also with \cite[Fig.~3]{Kr}.
\begin{figure}\centering
\makebox[\textwidth][c]{
\begin{tikzpicture}[scale=1.5, rotate=-117, xscale=-1,
 arrow/.style={->,>=stealth,thick},
 midlabel/.style={midway,fill=white}]
\path[gray] (0,0) node (x1)  {$_{X_1 Y_1 Z_2}$};
\path[gray] (2,1) node (x2)  {$_{X_1 Y_1 Z_1}$};
\path[gray] (-2,1) node (x3) {$_{X_2 U_1 Z_2}$};
\path[gray] (0,2) node (x4)  {$_{X_2 U_1 Z_1}$};
\path[gray] (0,4) node (x5)  {$_{W_1 Y_1 U_2}$};
\path[gray] (4,5) node (x6)  {$_{X_1 Y_2 V_1}$};
\path[gray] (2,5) node (x7)  {$_{X_2 Y_2 W_1}$};
\path[gray] (-2,5) node (x8) {$_{W_2 Y_1 Z_1}$};
\path[gray] (-4,5) node (x9) {$_{U_2 Y_1 Z_2}$};
\path[gray] (0,6) node  (x10)  {$_{V_1 Y_2 W_2}$};
\path[gray] (0,8) node  (x11) {$_{X_2 V_2 Z_1}$};
\path[gray] (2,9) node  (x12) {$_{X_1 V_2 Z_1}$};
\path[gray] (-2,9) node (x13) {$_{X_2 Y_2 Z_2}$};
\path[gray] (0,10) node  (x14){$_{X_1 Y_2 Z_2}$};

\path[red] (3.6,2) node (r1) {$_{\overline{X}_1 \overline{Z}_1}$};
\path[red] (1.2,4) node (r2) {$_{\overline{X}_2 \overline{W}_1}$};
\path[red] (-1.2,6) node (r3) {$_{\overline{Z}_1 \overline{W}_2}$};
\path[red] (-3.6,8) node (r4) {$_{\overline{X}_2 \overline{Z}_2}$};
\path[red] (1.1,6.3) node (r5) {$_{\overline{X}_1 \overline{Z}_2}$};

\path[blue] (1.1,2) node (b1) {$_{\overline{Y}_1 \overline{Z}_1}$};
\path[blue] (-0.6,1) node (b2) {$_{\overline{Y}_1 \overline{Z}_2}$};
\path[blue] (2.8,5.5) node (b3) {$_{\overline{Y}_2 \overline{V}_1}$};
\path[blue] (1.6,8) node (b4) {$_{\overline{V}_2 \overline{Z}_1}$};
\path[blue] (-0.6,9.2) node (b5) {$_{\overline{Y}_2 \overline{Z}_2}$};

\draw[arrow,gray]
(x2) edge node[midlabel] {\tiny{$Z_1$}} (x1)
     edge node[midlabel] {\tiny{$X_1$}} (x4)
     edge node[midlabel] {\tiny{$Y_1$}} (x6)
(x1) edge node[midlabel] {\tiny{$X_1$}} (x3)
     edge [bend right=-7, dash pattern=on 6pt off 2pt, thin] node[midlabel] {\tiny{$Y_1$}} (x14)
(x4) edge node[midlabel] {\tiny{$U_1$}} (x5)
     edge node[midlabel] {\tiny{$Z_1$}} (x3)
(x3) edge node[midlabel] {\tiny{$U_1$}} (x9)
(x5) edge node[midlabel] {\tiny{$Y_1$}} (x7)
     edge node[midlabel] {\tiny{$W_1$}} (x8)
(x6) edge node[midlabel] {\tiny{$X_1$}} (x7)
     edge node[midlabel] {\tiny{$V_1$}} (x12)
(x7) edge node[midlabel] {\tiny{$W_1$}} (x10)
(x8) edge node[midlabel] {\tiny{$Y_1$}} (x10)
     edge node[midlabel] {\tiny{$Z_1$}} (x9)
(x9) edge node[midlabel] {\tiny{$Y_1$}} (x13)
(x10) edge node[midlabel] {\tiny{$V_1$}} (x11);
\draw[arrow,gray]
(x11) edge node[midlabel] {\tiny{$Z_1$}} (x13)
(x14) edge node[midlabel] {\tiny{$X_1$}} (x13)
(x12) edge node[midlabel] {\tiny{$X_1$}} (x11)
      edge node[midlabel] {\tiny{$Z_1$}} (x14);
\draw[arrow,red]
    (r1) edge node[midlabel] {\tiny{$\overline{X}_1$}} (r2)
    (r2) edge node[midlabel] {\tiny{$\overline{W}_1$}} (r3)
    (r3) edge node[midlabel] {\tiny{$\overline{Z}_1$}} (r4)
    (r1) edge[bend right=0] node[midlabel] {\tiny{$\overline{Z}_1$}} (r5)
    (r5) edge node[midlabel] {\tiny{$\overline{X}_1$}} (r4)
    ;
\draw[red,dash pattern=on 2pt off 2pt] (r1)
    edge (x2) edge (x6);
\draw[red,dash pattern=on 2pt off 2pt] (r2)
    edge (x5) edge (x7);
\draw[red,dash pattern=on 2pt off 2pt] (r3)
    edge (x8) edge (x10);
\draw[red,dash pattern=on 2pt off 2pt] (r4)
    edge (x9) edge (x13);
    \draw[red,bend left=0,dash pattern=on 2pt off 2pt] (r5)
    edge (x1);
    \draw[red,dash pattern=on 2pt off 2pt] (r5)
    edge (x14);
\draw[arrow,blue] (b1)
    edge node[midlabel] {\tiny{$\overline{Z}_1$}} (b2)
    edge node[midlabel] {\tiny{$\overline{Y}_1$}} (b3);
\draw[arrow,blue] (b3)
    edge node[midlabel] {\tiny{$\overline{V}_1$}} (b4);
\draw[arrow,blue] (b4)
    edge node[midlabel] {\tiny{$\overline{Z}_1$}} (b5);
\draw[arrow,blue] (b2)
    edge[bend left=5] node[midlabel] {\tiny{$\overline{Y}_1$}} (b5);
\draw[blue,dash pattern=on 2pt off 2pt] (b1)
    edge (x2) edge (x4);
\draw[blue,dash pattern=on 2pt off 2pt] (b2)
    edge (x1) edge (x3);
\draw[blue,dash pattern=on 2pt off 2pt] (b3)
    edge (x6) edge (x7);
\draw[blue,dash pattern=on 2pt off 2pt] (b4)
    edge (x11) edge (x12);
\draw[blue,dash pattern=on 2pt off 2pt] (b5)
    edge (x13) edge (x14);
\end{tikzpicture}
}    
\caption{(Part of) the exchange graphs $\EGb(\calD)$ and $\EGb(\calD/\cl V)$ for $\calD=\pvd(A_3)=\bra X_1,Y_1,Z_1\ket$ and $\cl V=\bra X_1\ket$ (blue) or $\cl V=\bra Y_1\ket$ (red). The dotted lines stand for taking the quotient of hearts $\calA/(\calA\cap\cl V)$ when the t-structure $\calA$ is compatible with $\cl V$. 
}
\label{EG_A3}
\end{figure}
\end{example}

The graph $\EG(\pvd(e\Gamma e))$ is 
not necessarily connected. An example of disconnected graph 
is provided in \cite[Section 5.5]{BMQS} in terms of graphs of an 
associated weighted decorated marked surface.

\section{Simple-projective duality}
\label{sec_duality}

In this section, we relate simple tilts in $\pvd (e\Gamma e)$ 
with \emph{silting mutations} (Definition \ref{def_silting} below) 
in the perfect derived 
category $\per (e\Gamma e)$, and give an isomorphism between the 
exchange graph
$\EGb(\pvd(e\Gamma e))$ and the silting exchange graph of $\per(e \Gamma e)$,
see Theorem~\ref{thm:SPdual}. For, 
we first construct a bijection between sets of silting 
objects and 
finite hearts of $\pvd(e\Gamma e)$ (Proposition~\ref{le:dual}), 
and then we show that the bijection is compatible with 
silting and tilting mutations when $\Gamma=\Gamma(Q,W)$ for 
a quiver of geometric type. In fact, we use 
simple-projective duality for $\pvd\Gamma$ and $\per\Gamma$ and 
show that it descends to $e\Gamma e$.
\par 
We start by recalling some definitions and 
simple-projective duality for $\Gamma$.

\subsection{Simple-projective duality for $\pvd\Gamma$ and $\per\Gamma$} \label{sec:simpleprojD}

 Let $\C$ be an 
additive category with a full
subcategory $\calX$.
A \emph{left $\calX$-approximation} of an object $C$ in $\C$ 
is a morphism $f:C\to X$
 with $X\in\calX$, such that $\Hom(f,X')$ is an epimorphism 
 for any $X'\in\calX$.
A \emph{minimal left $\calX$-approximation} is a left 
approximation $f$ that is moreover
left-minimal, i.e., for any $g:X\to X$ such 
that $g\circ f = f$, the morphism $g$ is an isomorphism.

\begin{definition}[{\cite{KV,AI}}]\label{def_silting} Let $\C$ 
	be a triangulated category.\begin{enumerate}
\item A full subcategory $\calY$ of $\C$ is called a 
\emph{partial silting subcategory} of $\C$
	if $$\Hom^{>0}(\calY,\calY)=0.$$ It is said \emph{silting} 
	if, furthermore, $\thick(\calY)=\cl C$.\footnote{These 
	are often 
	called \emph{pre-}silting subcategories.}
\item A \emph{(partial) silting object}
	 $\bY$ is a direct sum of
	non-isomorphic indecomposable objects $Y_i$ of $\C$ such 
	that $\add \bY$ is
	a (partial) silting subcategory\footnote{Note that in 
	the literature this definition usually
	corresponds to a \emph{basic} (partial/pre) silting object.}.
\item The \emph{forward mutation} $\fmu_{Y_k}$ of a partial 
silting object $\bY=\bigoplus_i Y_i$ with respect to a summand $Y_k$ is an operation that produces another partial silting object
	$\fmu_{Y_k}\bY:=Y_k^\sharp\oplus\bigoplus_{i\neq k} Y_i$, 
	defined by 
	\begin{equation}\label{eq:fmu}
		Y_k^\sharp := \Cone \Big(Y_k \xrightarrow[]{f} 
		\bigoplus_{i\neq k} \Irr(Y_k,Y_i)^{\vee}\otimes Y_i \Big),
	\end{equation}
	where $\Irr(X,Y)$ is the space of irreducible 
	maps $X\to Y$ in $\add(\bY)$, and $f$ is the left 
	minimal $\add(\bigoplus_{i\neq k} Y_i)$-approximation 
	of $Y_k$.
\end{enumerate}
\end{definition}

If the Grothendieck group of $\C$ is finite (e.g., when $\C=\per\Gamma$),
any partial silting object has finitely many summands.
\par
\begin{definition}\label{def_SEG}	For the perfect derived category $\C=\per \Lambda$ of a non-positive dg
	algebra $\Lambda$, we denote by $\SEG\C$ the \emph{silting exchange graph}, i.e.,  the graph whose vertices are silting objects
	and whose directed edges are forward mutations between them, and by $\SEGp$ the connected component
	containing $\Lambda$
	as a vertex
	(same notation as for
	the exchange graphs $\EG$s).
\end{definition}
\par

The following theorem, which we refer to as \lq\lq simple-projective duality\rq\rq ,
is due to Keller-Nicol\`{a}s in our setting. The original proof has never been
published, but it is very similar to the proofs of analogous results in slightly
different contexts that can be found for instance in~\cite{KV, kq, KoY}.
It holds for the non-positive dg Ginzburg algebras $\Gamma(Q,W)$ of finite quivers with
potential. As a bijection of sets, i.e., irrespectively of 
compatibility with mutations, it is proved in the more general 
setting of locally finite non-positive dg algebras 
by \cite[Theorem 3.5]{fushimi}, which also covers the case 
of $e\Gamma e$.
\par
Recall that $\pvd\Gamma\subset \per \Gamma$.
\par
\begin{theorem}[Simple-projective duality for $\pvd(\Gamma)$ and $\per(\Gamma)$]\label{thm:KN2}
	There is an isomorphism between oriented graphs
	\begin{equation}\label{eq:s-p-dual}
		\dual_\Gamma\colon \EGp(\pvd(\Gamma))\to\SEGp(\per(\Gamma)),
	\end{equation}
	sending a finite heart $\calH$ with iso-classes of simples $\Sim\calH=\{[S_1],\dots, [S_n]\}$ to a silting object $\bY_\h=\bigoplus_{i=1}^n Y_i$, satisfying
	\begin{gather}
		\Hom_{\per\Gamma}(Y_i,S_j)=\delta_{ij}\acf\label{eq:Hom01}
	\end{gather}
 and
 \begin{gather}
		\Irr_{\add(\bY)}(Y_j,Y_i)\simeq \Ext^1(S_i,S_j)^{\vee}.\label{eq:Irr=Ext}
	\end{gather}
	In particular, the silting mutation corresponds to simple tilting.
\end{theorem}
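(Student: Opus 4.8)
The plan is to define $\dual_\Gamma$ on vertices, observe that the two displayed identities are essentially built into the construction, and then prove the theorem by upgrading $\dual_\Gamma$ to an isomorphism of oriented graphs. Given a finite heart $\calH\in\EGp(\pvd(\Gamma))$ with $\Sim\calH=\{[S_1],\dots,[S_n]\}$, I would put $\dual_\Gamma(\calH):=\bY_\calH=\bigoplus_{i=1}^n Y_i$, where $Y_i\in\per(\Gamma)$ is the object, unique up to isomorphism, with $\RHom_{\per(\Gamma)}(Y_i,S_j)$ concentrated in degree $0$ and equal to $\delta_{ij}\acf$. For the standard heart $\modules J$ this forces $Y_i=P_i=e_i\Gamma$, hence $\bY_{\modules J}=\Gamma$, which anchors both graphs. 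Existence and uniqueness of the $Y_i$ for an arbitrary finite heart is the silting/simple-minded-collection duality in the spirit of Keller--Nicol\`as and Koenig--Yang (cf. \cite{KoY,kq}): one can either invoke the bijection between bounded t-structures with length heart and bounded co-t-structures on $\per(\Gamma)$ (with co-heart $\add\bY_\calH$), or build $\bY_\calH$ inductively along a chain of simple tilts $\modules J\rightsquigarrow\calH$ using the mutation formula \eqref{eq:fmu} and checking that silting-ness and the orthogonality relation survive each step; the two routes agree because the orthogonality pins $\bY_\calH$ down. Being silting, $\thick\bY_\calH=\per(\Gamma)$, and being reached from $\Gamma$ by mutations, $\bY_\calH\in\SEGp(\per(\Gamma))$.

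Identity \eqref{eq:Hom01} is the degree-$0$ part of the defining property. For \eqref{eq:Irr=Ext} I would argue as follows: since $\bY_\calH$ is silting it is a compact generator of $\per(\Gamma)$, so $\per(\Gamma)\simeq\per(A)$ for the dg algebra $A:=\End^\bullet_{\per(\Gamma)}(\bY_\calH)$, which is non-positive with $H^0A=\End_{\per(\Gamma)}(\bY_\calH)$ (indeed $A$ is quasi-isomorphic to a Ginzburg-type dg algebra of a mutated quiver with potential). Passing to $\pvd$ and using \eqref{eq:Hom01}, the $S_i$ are identified with the simple $H^0A$-modules and $\calH$ with $\modules H^0A$ (so $\calH$ is in particular a module category). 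Then $\End_{\per(\Gamma)}(\bY_\calH)=H^0A$ is a basic finite-dimensional algebra whose indecomposable projectives are the $Y_i$ and whose simples are the $S_i$, so comparing its Gabriel quiver with the $\Ext^1$-quiver of $\calH$ yields $\Irr_{\add\bY_\calH}(Y_j,Y_i)\simeq\Ext^1_\calH(S_i,S_j)^{\vee}$.

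It remains to make $\dual_\Gamma$ an isomorphism of oriented graphs. Both $\EGp(\pvd(\Gamma))$ and $\SEGp(\per(\Gamma))$ are connected and $(n,n)$-regular, and $\dual_\Gamma$ matches the anchors; moreover $\dual_\Gamma$ is injective on vertices, because the simples of $\calH$ are recovered as the unique (up to isomorphism and reindexing) objects $T_1,\dots,T_n$ with $\RHom(Y_i,T_j)=\delta_{ij}\acf$, and $\calH$ is the heart they generate. Hence everything reduces to the single local statement that, for every $k$ — all simples of a finite heart here being rigid, since $Q$ and its mutations are loop-free — the forward simple tilt and the forward silting mutation match:
\[\dual_\Gamma(\mu^\sharp_{S_k}\calH)=\fmu_{Y_k}\bY_\calH,\]
together with the dual statement relating $\mu^\flat_{S_k}\calH$ to the corresponding backward silting mutation of $\bY_\calH$. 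Granting these, $\dual_\Gamma$ maps the $n$ forward (resp. backward) edges at $\calH$ bijectively onto those at $\bY_\calH$; so its image is a subgraph containing the anchor and closed under edges in both directions, hence all of $\SEGp(\per(\Gamma))$, and $\dual_\Gamma$ is a vertex-bijective, orientation-preserving graph isomorphism — which is exactly the closing assertion ``the silting mutation corresponds to simple tilting''.

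The key computation — and the step I expect to be the main obstacle — is this local identity. To prove $\dual_\Gamma(\mu^\sharp_{S_k}\calH)=\fmu_{Y_k}\bY_\calH$, write $\bY':=\fmu_{Y_k}\bY_\calH=Y_k^\sharp\oplus\bigoplus_{i\ne k}Y_i$, where by \eqref{eq:fmu} and \eqref{eq:Irr=Ext} there is a triangle $Y_k\xrightarrow{f}\bigoplus_{i\ne k}\Ext^1_\calH(S_i,S_k)\otimes Y_i\to Y_k^\sharp\to Y_k[1]$ with $f$ a minimal left $\add(\bigoplus_{i\ne k}Y_i)$-approximation, and recall (cf. Section \ref{subset_ginzburg}) that the simples of $\mu^\sharp_{S_k}\calH$ are $S_k[1]$ and, for $j\ne k$, the universal extensions $F_j$ with $S_k\otimes\Ext^1_\calH(S_j,S_k)\to F_j\to S_j$. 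Applying $\RHom(Y_i,-)$ and $\RHom(Y_k^\sharp,-)$ to these triangles and feeding in the known values $\RHom(Y_i,S_j)=\delta_{ij}\acf$ and $\RHom(Y_i,S_j[m])=0$ for $m\neq 0$ (both immediate from $\per(\Gamma)\simeq\per(A)$), one reads off at once $\RHom(Y_i,S_k[1])=0$ and $\RHom(Y_i,F_j)=\delta_{ij}\acf$ for $i\neq k$, and $\RHom(Y_k^\sharp,S_k[1])=\acf$ as well as $\RHom(Y_k^\sharp,S_j)=\Irr(Y_k,Y_j)\simeq\Ext^1_\calH(S_j,S_k)^{\vee}$, all concentrated in degree $0$; the one non-formal step is the vanishing $\RHom(Y_k^\sharp,F_j)=0$, which amounts to the connecting map $\RHom(Y_k^\sharp,S_j)\to\RHom(Y_k^\sharp,S_k[1])\otimes\Ext^1_\calH(S_j,S_k)$ being, under these identifications, the canonical perfect pairing $\Ext^1_\calH(S_j,S_k)^{\vee}\otimes\Ext^1_\calH(S_j,S_k)\to\acf$ — and this is forced by the universality of $F_j$, equivalently the minimality of $f$ (which is also what makes $\bY'$ a basic silting object with exactly $n$ summands). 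Uniqueness of the silting object satisfying \eqref{eq:Hom01} then identifies $\bY'$ with $\dual_\Gamma(\mu^\sharp_{S_k}\calH)$; the backward identity is the mirror computation. I expect this matching of the universal extension against the minimal approximation to be the genuinely delicate point; everything else is bookkeeping on top of the standard silting/co-t-structure dualities.
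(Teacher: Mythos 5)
The paper does not actually prove Theorem~\ref{thm:KN2}: it attributes it to unpublished work of Keller--Nicol\`as, points to analogous published proofs (King--Qiu, Koenig--Yang, Keller--Vossieck), and then records only the one key idea, namely that $\per\Gamma$ can be re-presented as $\per\Gamma_\calH$ for the dg endomorphism algebra $\Gamma_\calH=\End^\bullet(\bY_\calH)$, so that the $Y_i$ become indecomposable projectives and the $S_i$ the corresponding simples over $H^0\Gamma_\calH$, from which \eqref{eq:Hom01} and \eqref{eq:Irr=Ext} are the standard projective/simple orthogonality and Gabriel-quiver facts. Your proposal is built on exactly this key idea (your dg algebra $A$ is the paper's $\Gamma_\calH$), and then fills in what the paper leaves to the references: the $(n,n)$-regularity and connectedness of the two exchange graphs, injectivity via recovering the SMC from $\bY_\calH$, and above all the local matching $\dual_\Gamma(\mu^\sharp_{S_k}\calH)=\fmu_{Y_k}\bY_\calH$ via the $\RHom$ computation on the mutation triangle and the universal-extension triangle. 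You correctly isolate the genuinely non-formal point --- that the connecting map $\Hom(Y_k^\sharp,S_j)\to\Hom(Y_k^\sharp,S_k[1])\otimes\Ext^1(S_j,S_k)$ is an isomorphism, forced by the minimality of the approximation $f$ matching the universality of $F_j$ --- and this is precisely the step that the cited proofs carry out. So your argument is correct and is a faithful elaboration of the approach the paper sketches; the only cosmetic remark is that in the opening paragraph you assume $\bY_\calH\in\SEGp(\per\Gamma)$ before having established the local mutation-matching that actually justifies it, but since you prove that statement immediately afterwards the logic closes up.
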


The key idea of the proof of Theorem~\ref{thm:KN2} is that the category $\per\Gamma$
	can be realized as $\per\Gamma_{\h}$, where $\Gamma_{\h}$ is the dg endomorphism
	algebra of the silting object $\bY_\h$ for any $\h\in \EGp(\pvd(\Gamma))$. Then the
	summands $Y_i$ of $\bY_\h$ are
	projective $\Gamma_{\h}$-modules and the corresponding simples~$S_i$ in $\h$ are
	simple $\Gamma_{\h}$-modules, for which \eqref{eq:Hom01} and \eqref{eq:Irr=Ext}
	hold.

\subsection{Simple-projective duality for $\pvd(e \Gamma e)$ and $\per(e\Gamma e)$}

We show that Theorem \ref{thm:KN2} is preserved under 
localization. Proposition \ref{le:dual} below states a correspondence between 
finite hearts of bounded t-structures and silting objects. 
Theorem \ref{thm:SPdual} promotes this bijection to an isomomorphism 
of graphs, in the case that $\Gamma=\Gamma(Q,W)$ comes from a quiver 
with potential with geometric description. 
\begin{rmk*}
Let $\calK:=\modules J_I =\<S_i \mid i\in Q_I\>$ be the Serre
subcategory of $\calA:=\modules J$ associated 
with $I\subset Q_0$, and  $\calV:=\pvd(\Gamma_I)=
\thick \cl K\subset \pvd\Gamma$. 
\end{rmk*}

\begin{rmk*}
If $\operatorname{G}$ is a graph, $|\operatorname{G}|$ 
denotes its set of vertices.
\end{rmk*}

Recall that the principal part $\EGb(\pvd(\pvc))$ 
of $\EG(\pvd(\pvc))$
consists of the quotients of $\calV$-compatible hearts 
in $\EGp(\pvd(\Gamma))$, the principal part of the exchange graph 
of $\pvd\Gamma$ containing $\cl A=\modules J$. 
\par\medskip
Before proving the main results, we rephrase a 
result by H.\,Jin, originally expressed in terms of \emph{simple 
minded collections}, corresponding to collections of simple objects 
(up to isomorphism) in finite hearts of bounded t-structures. 
\begin{lemma}\label{thm_J} Each finite heart 
$\ol{\cl H}\in \EGb(\pvd(e\Gamma e))$ 
can be lifted uniquely to a finite heart $\cl H\in \EGp(\pvd \Gamma)$ 
such that $\Sim\cl H$ contains $\Sim\calK$.
\end{lemma}
\begin{proof}
It is an application of \cite[Theorem 3.1]{J}, using the 
correspondence 
between finite hearts and simple minded collections 
in $\pvd\Gamma$ (see e.g.\ \cite{KoY}).
\end{proof}
We denote by 
$|\EGb_\calK(\pvd(\Gamma))|$ the sub-set of all such lifts:
\begin{equation}\label{Jin_map}
	\xymatrix{|\EGb_\calK\pvd(\Gamma)| \ar@{^{(}->}[r] & |\EG^\circ\pvd\Gamma|\\
	|\EGb\pvd(e\Gamma e)| \ar[u]^{1:1}_{\text{\cite{J}}}  
	}
\end{equation}
\begin{rmk} If $\Gamma = \Gamma(Q, W )$ for some quiver 
	with potential with geometric description, 
then Section \ref{subsec_lifting_simple_tilts} 
provides a constructive way of finding the lift $\cl H$ 
of $\ol{\cl H}$ containing $\cl K$. 
The procedure possibly 
depends on the choice of a reference heart $\cl A\supset\cl K$ 
for any connected 
component of $\EGp(\pvd \Gamma, \pvd \Gamma_I)$.
\end{rmk}
\end{comment}

We also need some preliminary results and to set some notation. 
Recall 
that $\per (e\Gamma e)$ is orthogonal to $\calV=\pvd\Gamma_I$ 
inside $\per\Gamma$, by Lemma \ref{lem:perpV}. 
\begin{definition}\label{def_pSEG}
A partial silting object $\bX$ in $\per(\Gamma)$ is 
called \emph{$\calV$-perpendi\-cular}
if 
\[\Hom_{\per\Gamma}(\bX,\calV)=0 \text{ and }\thick(\bX)=
{^{\perp_{\per(\Gamma)}}}\calV.\]
We define 
\[\pSEGV \left(\per\Gamma\right)
\]
as the graph whose vertices are $\calV$-perpendicular 
\emph{partial} silting objects $\bX$ and whose directed edges  
are forward mutations. We call it the 
\emph{$\calV$-perpendicular silting exchange graph}.
\end{definition}

\begin{lemma}\label{lem:obv}
There is a canonical bijection
\[
    \SEG(\per(e\Gamma e))\, \simeq \, \pSEGV (\per\Gamma)\,.
\]
\end{lemma}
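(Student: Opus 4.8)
The plan is to exhibit the bijection explicitly using the two descriptions of $\per(e\Gamma e)$ already available in the excerpt: on one hand as the Verdier quotient-related category sitting inside $\per(\Gamma)$ via the fully faithful functor $\ell$, whose essential image is ${}^{\perp_{\per\Gamma}}\iota(\pvd\Gamma_I)$ by Lemma~\ref{lem:perpV}; on the other hand intrinsically as the perfect derived category of the dg algebra $e\Gamma e$. First I would recall that $\ell\colon \per(e\Gamma e)\hookrightarrow\per(\Gamma)$ is a fully faithful triangulated functor with $\thick(\ell(\per(e\Gamma e)))=\ell(\per(e\Gamma e))={}^{\perp_{\per\Gamma}}\calV$. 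Since $\ell$ is fully faithful and triangulated, it sends a silting object of $\per(e\Gamma e)$ to an object $\bX$ of $\per(\Gamma)$ with $\Hom^{>0}(\bX,\bX)=0$ and $\thick(\bX)=\ell(\per(e\Gamma e))={}^{\perp_{\per\Gamma}}\calV$; because its image lands in ${}^{\perp_{\per\Gamma}}\calV$ we also get $\Hom(\bX,\calV)=0$. Thus $\ell(\bX)$ is a $\calV$-perpendicular partial silting object in the sense of the definition just given, so $\ell$ induces a map $|\SEG(\per(e\Gamma e))|\to|\pSEGV(\per(\Gamma))|$.

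Next I would construct the inverse. Given a $\calV$-perpendicular partial silting object $\bX$ in $\per(\Gamma)$, by definition $\thick(\bX)={}^{\perp_{\per\Gamma}}\calV=\ell(\per(e\Gamma e))$, so $\bX$ lies in the essential image of the fully faithful functor $\ell$, i.e. $\bX\isom\ell(\bX')$ for a well-defined (up to isomorphism) object $\bX'\in\per(e\Gamma e)$. Full faithfulness of $\ell$ transports the vanishing $\Hom^{>0}_{\per\Gamma}(\bX,\bX)=0$ to $\Hom^{>0}_{\per(e\Gamma e)}(\bX',\bX')=0$, so $\bX'$ is partial silting in $\per(e\Gamma e)$; and the condition $\thick(\bX)=\ell(\per(e\Gamma e))$ together with full faithfulness gives $\thick(\bX')=\per(e\Gamma e)$, so $\bX'$ is in fact silting. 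This assignment is clearly inverse to the one above on iso-classes, establishing the bijection. I would also note that indecomposables are preserved (a fully faithful additive functor reflects and preserves indecomposability), so summands correspond to summands and the basic/non-isomorphic-summands normalization matches on both sides.

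The main subtlety — and the step I expect to require the most care — is verifying that $\ell(\per(e\Gamma e))$ is genuinely \emph{thick} in $\per(\Gamma)$, i.e. closed under direct summands, so that the equality $\thick(\bX)={}^{\perp_{\per\Gamma}}\calV$ really does force $\bX$ into the image of $\ell$ rather than merely into its thick closure; this is what makes ``$\bX\isom\ell(\bX')$'' legitimate. This follows because $\ell$ is fully faithful and $\per(e\Gamma e)$ is idempotent-complete (being a perfect derived category), hence its essential image is a thick triangulated subcategory, and a left orthogonal ${}^{\perp_{\per\Gamma}}\calV$ is automatically thick, so the two thick subcategories coincide as asserted in Lemma~\ref{lem:perpV}. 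The only other point worth a line is that the bijection is genuinely canonical: it does not depend on choices, since $\ell$ is a fixed functor and the object $\bX'$ with $\ell(\bX')\isom\bX$ is unique up to unique isomorphism by full faithfulness. I would close by remarking that this is only a bijection of vertex sets at this stage; that it upgrades to an isomorphism of graphs (matching forward mutations) is what the subsequent results of the section will establish, once the mutation formula \eqref{eq:fmu} is checked to be compatible with $\ell$.
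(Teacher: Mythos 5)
Your argument is correct and follows the paper's own route: both proofs send a silting object of $\per(e\Gamma e)$ to its image under the fully faithful embedding $\ell$ and invoke Lemma~\ref{lem:perpV} to see this lands in the $\calV$-perpendicular partial silting objects, with the inverse coming from the fact that $\thick(\bX)={}^{\perp_{\per\Gamma}}\calV=\ell(\per(e\Gamma e))$ forces $\bX$ into the essential image of $\ell$. You merely spell out the surjectivity and thickness checks that the paper compresses into ``it is straightforward to see that this map is a bijection.''
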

\begin{proof}
Any silting object $\bY$ in $\per(\pvc)$ is naturally a 
partial silting object in $\per\Gamma$, because $\ell$ 
is fully faithful.
Moreover, $\bY\in \per(\pvc)$ is $\calV$-perpendicular 
because $\thick\bY=\per(\pvc)={^{\perp_{\per\Gamma}}}\calV$ 
(Lemma \ref{lem:perpV}). Hence we have an inclusion
$$|\SEG(\per(e\Gamma e))|\hookrightarrow |\pSEGV(\per\Gamma)|.$$
It is straightforward to see that this map is a bijection.\\
Since the definition of partial silting mutation  of a (partial) 
silting object $\bY$ only depends on the summands of $\bY$ and 
maps between them, and since $\ell$ is fully faithful, it is 
in fact an isomorphism of graphs.
\end{proof}

\par
\begin{definition}\label{def_seg_bullet} Fix the connected componet $\SEGp\left(\per (\Gamma)\right)$ containing $\Gamma$.
	\begin{itemize}
\item We let $\pSEGVb (\per(\Gamma))$ be the principal part 
of $\pSEGV (\per(\Gamma))$
consisting of partial silting objects that can be completed 
into
silting objects in $\SEGp(\per(\Gamma))$. 
\item We denote 
by $\SEGb(\per(\pvc))$
the full subgraph of $\SEG(\per (e\Gamma e))$ 
which is the preimage of $\pSEGVb (\per(\Gamma))$ under the 
bijection in Lemma \ref{lem:obv}, so that 
we have a resulting bijection
\begin{equation}\label{eq:id}
	\operatorname{i}\colon \SEGb (\per(\pvc)) \simeq 
	\pSEGVb (\per(\Gamma)).
\end{equation}
\end{itemize}
\end{definition}
\noindent The graph $\SEGb(\per (e\Gamma e))$ is the relevant graph appearing 
in simple-projective duality for $e\Gamma e$.
\begin{prop}\label{le:dual}
There is a bijection
\begin{gather}\label{eq:s-p-dual2}
	\iota_s\colon|\EGb(\pvd(\pvc))|\simeq|\SEGb(\per(\pvc))|.
\end{gather}
\end{prop}
\begin{proof} In order to prove the bijection, we construct explicitly a map 
\[
\ivc:|\EGb(\pvd(e \Gamma e))| \to |\pSEGVb(\per\Gamma)|
\] and compose it with the bijection \eqref{eq:id}. 
The construction of $\ivc$ involves three steps: a heart 
$\overline{\cl H}\in |\EGb(\pvd (e\Gamma e))|$ is first 
lifted to a heart $\cl H$ of $\pvd \Gamma$ containing $\calK$, 
then the silting-projective duality for $\Gamma$ 
is applied to get a silting object in $\per \Gamma$, 
which is restricted to a $\calV$-perpendicular 
silting object (diagram \eqref{ivc_diagram} below). 
To conclude the proof, the map is shown to be 
injective (\emph{step 4}), and finally bijective (\emph{step 5}).
\par
\emph{Steps 1 and 2.}  Let $\ol{\cl H}$ be a finite heart in
$\EGb(\pvd(e\Gamma e))$. By Lemma \ref{thm_J} (\cite[Theorem 3.1]{J}), 
it can be lifted uniquely to a finite heart $\cl H\in \EGp(\pvd \Gamma)$ 
such that $\Sim\cl H$ contains $\Sim\calK$. 
We identify $I$ with the set of labels of $\Sim \calK$ 
and $I^c$ with the set
of labels of $\Sim \h\setminus \Sim \calK$ for 
any $\h\supset\calK$. 
Secondly, we apply the simple-projective duality 
\[\iota_{\Gamma}:\EGp(\pvd\Gamma) 
\stackrel{1:1}{\longrightarrow}\SEGp(\per\Gamma)
\] 
of Theorem \ref{thm:KN2}, to get a silting 
object $\bY_\h:=\iota_\Gamma(\calH)$. 
\par
\emph{Step 3.} The
silting object $\bY_\h:=\iota_\Gamma(\calH)\in 
|\SEGp(\pvd(\Gamma))|$ corresponding
to~$\calH$ decomposes as
\begin{equation}\label{eq:Y}
	\bY_\h=:\bigoplus_{j\in I^c}Y_j \oplus\bigoplus_{i\in I}Y_i=:
	\bY_{\cl R}\oplus \bY_\calK.
\end{equation}
Similarly, there is a decomposition
\[\calH=\langle\cl R,\calK\rangle\]
with $\cl R=\<S\mid [S]\in\Sim\h\setminus\Sim\calK\> =\<S_j\mid j\in I^c\>$, 
and $\cl K= \bra S_i\mid i\in I\ket$. 
The summands $Y_\ell$ of $\bY_\h$, and consequently 
of $\bY_\calK$ and $\bY_{\cl R}$, satisfy 
$\Hom_{\per\Gamma}(Y_\ell,S_t)=\delta_{\ell t}\mathbf{k}$ 
(equation \eqref{eq:Hom01}), 
for any $[S_t]\in\Sim\cl H$. 
Therefore 
 \[
	\Hom_{\per\Gamma}(\bY_{\cl R},\calK)\=0 \=\Hom_{\per\Gamma}(\bY_\calK,\cl R),
\]
and $\bY_\calK$ and $\bY_{\cl R}$ are partial silting 
objects in $\per\Gamma$. 
Clearly 
$\thick(\bY_{\cl R})\subseteq
{^{\perp_{\per\Gamma}}\calV}$, because $\cl V =\thick \cl K$, and the 
equality can be proven arguing as in Lemma~\ref{lem:perpV}: 
for any 
$X\in{^{\perp_{\per \Gamma}}\cl V}\subset \per \Gamma$ 
and for any $S_i\in\cl K$
\[0 = \Hom_{\per\Gamma}(X,S_i)=
\oplus\Hom_{\per\Gamma}(Y_\ell[d_\ell],S_i),
\]
where the $Y_\ell$'s are, up to shifts, the summands of 
the minimal perfect 
presentation of $X$, and must be in $\thick \bY_{\cl R}$.
\par
This construction defines a map of sets 
\[\ivc:|\EGb(\pvd(e\Gamma e))|\longrightarrow |\pSEGVb(\per\Gamma)|,\]
diagrammatically represented by
\begin{equation}\label{ivc_diagram}
	\xymatrix@C=.5pt{
		|\EGp(\pvd\Gamma)|\ \ni & \cl H \ar@{|->}[rrrrrr]^-{\iota_\Gamma} &&&&&& \bY_{\cl H}=\bY_{\cl R}\oplus \bY_{\cl K} \ar@{|->}[dd] & \in\ |\SEGp(\per\Gamma)|\\
		|\EGb_{\cl K}(\pvd\Gamma)|\ \ni  \ar@{^{(}->}[u] &\cl H \ar@{=}[u]\\
		|\EGb(\pvd (e\Gamma e))|\ \ni\ar[u]^{1:1} & \ol{\cl H} \ar@{|->}[rrrrrr]^-{\ivc} \ar@{|->}[u] &&&&&& \bY_{\cl R} & \in\ |\pSEGVb(\per \Gamma)|
	}
\end{equation}
\par
\emph{Step 4.} The map $\ivc$ is injective. Suppose 
\begin{equation}\label{hp}\bY_{\cl R}=\ivc(\ol{\cl H}) = \ivc(\ol{\cl H'}).
\end{equation}
Then the 
finite hearts $\cl H$ and $\cl H'$ of $\pvd\Gamma$ are generated 
by simples $\{S_i, S_j\mid i\in I, j\in I^c\}$ 
and $\{S'_i, S_j\mid i\in I, j\in I^c\}$, corresponding by 
hypothesis \eqref{hp} to the same silting object 
$\bY_{\cl H}=\bY_{\cl R}\oplus \bY_{\cl K}=\bY_{\cl H'}$. 
Therefore $S_i\simeq S_i'$ and $\cl H=\cl H'$. 
\par
\emph{Step 5.} We now prove surjectivity, and 
conclude that $\ivc$ is a bijection. \\
We take any $\bY_{\cl R'}=\bigoplus_{j\in I^c} Y'_j$ in 
$|\pSEGVb(\per\Gamma)|$, a completable partial silting object 
such that 
$\thick(\bY_{\cl R'})= {^{\perp_{\per\Gamma}}\cl V}$, and  
complete it to a silting object $\bY'=\bY_{\cl R'}\oplus 
\bY_{\cl K'}$, with
corresponding finite heart $\h'=\<{\cl R'}, \calK'\>
\in |\EGp(\pvd\Gamma)|$. 
The decompositions correspond again to the decomposition of the set 
$Q_0=I\sqcup I^c$. 
Simple-projective duality for $\Gamma$ (Theorem~\ref{thm:KN2}), 
implies that if $M\in\cl H'$, then
$\Hom_{\per\Gamma}(\bY_{\cl R'},M)=0$ if and only if $M\in\calK'$. 
This, together with
$\thick(\bY_{\cl R'})= {^{\perp_{\per\Gamma}}\cl V}=\per(e\Gamma e)$ 
(Lemma \ref{lem:perpV}) implies
that $\h'\cap\calV=\calK'$, which is a Serre subcategory of 
$\h'$.
By Proposition~\ref{prop_antieau}, this means that the heart $\h'$ 
induces a quotient
heart $\ol{\h}$ in $\EGb\left(\pvd(\pvc)\right)$, and we want to 
show that $\bY_{\cl R'}=\ivc\ol{\cl H}$. 
For, we apply the construction of $\ivc$ to $\ol\h$ to 
get a heart in
$\calH=\<\cl R,\calK\>$ in $\EGb_\calK\left(\pvd\Gamma\right)$ (\emph{step 1}), 
with dual silting object
$\bY=\bY_{\cl R}\oplus\bY_{\cl K}$ as in \eqref{eq:Y} (\emph{step 2}), 
and image $\ivc\ol{\cl H}=\bY_{\cl R}$ (\emph{step 3}). 
Let 
\[\Sim\h=\Sim{\cl R}\cup\Sim\calK=
\{[S_j]\}_{j\in I^c}\cup\{[S_i]\}_{i\in I}\]
and 
\[\Sim\h'=\Sim{\cl R'}\cup\Sim\calK'=
\{[S'_j]\}_{j\in I^c}\cup\{[S'_i]\}_{i\in I}.\]
Since $S_j$ and $S_j'$ are mapped to the 
same simples in $\ol{\h}$,
we have $S_j\in \calV*S_j'*\calV$.
And since the summands~$Y_k$ of $\bY_\calK$ are orthogonal
to $\calV$, we conclude that $\Hom_{\per\Gamma}(Y_k,S_j')=
\Hom_{\per\Gamma}(Y_k,S_j)=\delta_{kj}\acf$.
When we regard $\per(e \Gamma e)$ as $\per\bY_{\cl R'}$, 
the object~$Y_k$ admits
a minimal perfect presentation with summands the shifts 
of~$Y_j'$ for $j\in I^c$ (\cite[Lemma 2.14]{plamondon}).
Then the $\Hom$-condition above implies by \eqref{eq:XS} 
that $Y_j=Y_j'$  for any
$j\in I^c$ as required, i.e., $\bY_{\cl R}=\bY_{\cl R'}$. 
This concludes the proof of surjectivity, and hence of bijectivity.
\end{proof}

We now assume that $(Q,W)$ is a quiver with potential 
arising from a triangulation of a (unpunctured) 
decorated marked surface $\surfo$ defined in 
Section \ref{subsec_cate_from_surf}.

\par
\begin{theorem} \label{thm:SPdual}
If $\Gamma=\Gamma(Q,W)$ for some quiver with potential with 
geometric description, then the map
\begin{equation}\label{eq:s-p-dual3}
	\iota_s\colon \EGb(\pvd(\pvc))\to\SEGb(\per(\pvc))
\end{equation}
from Lemma~\ref{le:dual} is an isomorphism between oriented graphs,
and the analogous of \eqref{eq:Hom01} holds.
\end{theorem}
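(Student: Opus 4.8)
The plan is to promote the bijection $\iota_s$ of Lemma~\ref{le:dual} to an isomorphism of oriented graphs by showing that it intertwines simple forward tilts in $\pvd(\pvc)$ with forward silting mutations in $\per(\pvc)$, and then to read off the $\Hom$-duality from the recollement~\eqref{tr_recollement}. I keep the notation from the proof of Lemma~\ref{le:dual}: a vertex $\ol\h\in\EGb(\pvd(\pvc))$ has a canonical $\calV$-compatible lift $\calH=\langle\cl R,\calK\rangle$ with dual silting object $\bY_\h=\bY_{\cl R}\oplus\bY_\calK$ as in~\eqref{eq:Y}, and $\iota_s(\ol\h)=\bX$ is the silting object of $\per(\pvc)$ with $\ell(\bX)=\bY_{\cl R}$, equivalently $\bX=\pi(\bY_{\cl R})$ since $\pi\ell\simeq\id$.

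The main step is to check that an edge $\ol\h\to\mu^\sharp_{\ol S}\ol\h$ on the left is sent to an edge on the right. First I would pass to a \emph{convenient} lift: applying Proposition~\ref{ExConvRep} to the simple $S\in\calH\setminus(\calH\cap\calV)$ lifting $\ol S$, I obtain a $\calV$-compatible lift $\calH_c=\mu^\sharp_{\mathcal F}\calH$ of $\ol\h$, with $\mathcal F\subset\calH\cap\calV$, in which $S$ stays simple and satisfies the no-arrow condition~\eqref{no_arrow_eq2}. Since $\calH_c$ is again a $\calV$-compatible lift of $\ol\h$, the independence argument at the end of the proof of Lemma~\ref{le:dual} gives $\bY_{\calH_c}=\bY_{\cl R}\oplus\bY_{\calK_c}$ with the \emph{same} $\cl R$-part, so the summand $Y_S$ dual to $S$ is a summand of $\bY_{\cl R}$. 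By Proposition~\ref{prop_lift_simple_tilt}, $\mu^\sharp_S\calH_c$ is $\calV$-compatible with $\ol{\mu^\sharp_S\calH_c}=\mu^\sharp_{\ol S}\ol\h$, and by Theorem~\ref{thm:KN2} it is dual to the forward mutation $\mu^\sharp_{Y_S}\bY_{\calH_c}$. Now~\eqref{eq:Irr=Ext} turns the no-arrow condition into $\Irr(Y_S,Y_i)\simeq\Ext^1_{\calH_c}(S_i,S)^{\vee}=0$ for every $i\in I$; hence no irreducible map out of $Y_S$ lands in $\bY_{\calK_c}$, irreducibility of a map $Y_S\to Y_j$ with $j\in I^c$ is detected already in $\add\bY_{\cl R}$, and the formula~\eqref{eq:fmu} for $Y_S$ only involves $\cl R$-summands, so $\mu^\sharp_{Y_S}\bY_{\calH_c}=(\mu^\sharp_{Y_S}\bY_{\cl R})\oplus\bY_{\calK_c}$. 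Applying $\pi$ and using that $\ell$ is fully faithful, this reads $\iota_s(\mu^\sharp_{\ol S}\ol\h)=\mu^\sharp_{X_S}\bX$ with $X_S=\pi(Y_S)$, as desired.

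For the opposite direction I would start from a forward mutation $\bX\to\mu^\sharp_{X_k}\bX$ with $k\in I^c$, form the canonical lift $\calH=\langle\cl R,\calK\rangle$ of $\ol\h=\iota_s^{-1}(\bX)$, and run the same recipe at the simple $S_k$ (which lies outside $\calV$, being indexed by $I^c$): Proposition~\ref{ExConvRep} yields a convenient lift $\calH_c$, and the simple tilt $\mu^\sharp_{S_k}\calH_c$ is carried by $\iota_s$ to $\mu^\sharp_{X_k}\bX$ by the previous paragraph. So every forward-mutation edge is hit. Since on both sides an edge is determined by its ordered pair of endpoints (the mutated simple/summand being recovered as the one that becomes shifted), this together with the vertex bijection of Lemma~\ref{le:dual} shows that $\iota_s$ is an isomorphism of oriented graphs. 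I expect the main obstacle to be exactly this compatibility of silting mutation with localization: the intrinsic mutation of the partial silting object $\bY_{\cl R}$ in $\per(\pvc)$ need not agree with the restriction of the ambient mutation in $\per\Gamma$ for an arbitrary $\calV$-compatible lift, and securing a lift for which it does is where the geometric-type hypothesis enters, through Proposition~\ref{ExConvRep}, and where we must rely on the independence of the $\cl R$-part of the dual silting object on the choice of lift.

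Finally, for the analogue of~\eqref{eq:Hom01}: write $\bX=\bigoplus_{j\in I^c}X_j$ with $X_j=\pi(Y_j)$, and $\Sim\ol\h=\{[\ol S_i]=[\pi S_i]\mid i\in I^c\}$. Using $\ell\dashv\pi\dashv r$ from~\eqref{tr_recollement} together with $\pi\ell\simeq\id$, one has
\[
\Hom_{\calD(\pvc)}(X_j,\ol S_i)=\Hom_{\calD(\pvc)}(\pi Y_j,\pi S_i)=\Hom_{\calD\Gamma}(Y_j,r\pi S_i).
\]
Applying $\Hom_{\calD\Gamma}(Y_j,-)$ to the localization triangle $\iota p\, S_i\to S_i\to r\pi\, S_i\to\iota p\, S_i[1]$ and using that $Y_j\in\ell(\calD(\pvc))={}^{\perp_{\calD\Gamma}}\iota(\calD(\Gamma_I))$ is left-orthogonal (a shift-stable relation coming from~\eqref{tr_recollement}) to the image of $\iota$, the two outer $\Hom$ groups vanish, whence $\Hom_{\calD(\pvc)}(X_j,\ol S_i)\simeq\Hom_{\calD\Gamma}(Y_j,S_i)=\delta_{ij}\acf$ by~\eqref{eq:Hom01}. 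This is the asserted analogue of~\eqref{eq:Hom01}, completing the plan.
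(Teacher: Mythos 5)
Your proposal is correct and follows essentially the same route as the paper: both arguments hinge on producing a convenient $\calV$-compatible lift of $\ol\h$ via Proposition~\ref{ExConvRep} so that the no-arrow condition \eqref{no_arrow_eq2} holds for the chosen simple, and then invoking Proposition~\ref{prop_lift_simple_tilt} together with Theorem~\ref{thm:KN2} and \eqref{eq:Irr=Ext} to see that the ambient silting mutation in $\per\Gamma$ restricts to the intrinsic mutation of the $\cl R$-part. The only difference is cosmetic: you argue the edge-preservation starting from tilts and then supply the converse explicitly, while the paper starts from silting mutations and treats one direction; and for the analogue of \eqref{eq:Hom01} you spell out the computation via the adjunction $\pi\dashv r$ and the recollement triangle $\iota p\, S_i\to S_i\to r\pi\, S_i$ together with $\ell(\calD(\pvc))={}^{\perp}\iota(\calD(\Gamma_I))$, where the paper simply asserts the implication from the ambient \eqref{eq:Hom01}. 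Your extra detail is a welcome tightening, not a different method.
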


\begin{proof}
The map $\iota_s$ is constructed in the previous proof at the 
level of sets as the composition
\[\iota_s:|\EGb(\pvd(e\Gamma e)) \stackrel{\ivc}{\longrightarrow} 
|\pSEGVb(\per \Gamma)| 
\simeq |\SEGb(\per (e\Gamma e))|.\] 
We recall that the second bijection is an isomorphism of graphs 
by definition.
\par
We identify any silting object in 
$\SEGb(\per (e\Gamma e))$ with the 
corresponding partial silting object in $\pSEGVb(\per\Gamma)$ 
(Lemma \ref{lem:obv}), and we consider a partial 
silting mutation 
\[\bX_1\xrightarrow{Y}(\bX_1)_Y^\sharp=:\bX_2, \text{ in }\pSEGVb(\per\Gamma)\]
with respect to a summand $Y$ of $\bX_1$. 
Arguing as in \emph{Step 5} of the previous proof, we complete 
$\bX_1$ and $\bX_2$ to silting 
objects $\bY_1=\bX_1\oplus \bY_{\cl K}$ and 
$\bY_2=\bX_2\oplus \bY_{\cl K}$ in $|\SEGp(\per\Gamma)|$ and 
we let $\cl H_1:=\bra \cl R_1, \cl K\ket$ and 
$\cl H_2:=\bra \cl R_2, \cl K\ket$ be 
the corresponding hearts in $|\EGp(\pvd\Gamma)|$ under 
the simple projective duality for $\Gamma$ (top left corner 
of \eqref{ivc_diagram}).\\
The mutation formula \eqref{eq:fmu}, together with the condition 
$\Hom_{\per\Gamma}(\bX_1,\cl V)=0$ from Definition \ref{def_pSEG}, 
implies that the silting mutation of $\bY_1=\bX_1\oplus \bY_{\cl K}$ 
at $Y$ does not depend on the summands in $\bY_{\cl K}$, which 
implies that the operations of silting mutation at 
the levels of $\pSEGVb(\per \Gamma)$ and of $\SEGp(\per \Gamma)$ 
commute with the restriction map (the right vertical 
arrow in diagram \eqref{ivc_diagram}):
\[\xymatrix{
	\bY_1 = \bX_1\oplus\bY_{\cl K} \ar[r]^-Y\ar@{|->}[dd]\ar@{}[rdd]|{\scalebox{1.5}{$\square$}}
	        & \mu^\sharp_Y \bY_1=\bX_2\oplus \bY_{\cl K} \ar@{|->}[dd]\\ \\
	\bX_1 \ar[r]^Y & \bX_2
}
\]
We now apply the isomorphism of oriented graphs 
\[\iota_\Gamma^{-1}: \SEGp(\per\Gamma) \to \EGp(\pvd \Gamma)\]
associating the silting mutation
\[\bY_1 \stackrel{Y}{\rightarrow}\bY_2
\] 
to the tilting mutation
\[ \cl H_1 \stackrel{S_Y}{\rightarrow} \cl H_2
\] 
for a simple object $S_Y\in \cl R_1 \subset \cl H_1$.\\
The fact that $\cl H_1 =\bra \cl R_1, \cl K\ket \stackrel{S_Y}{\rightarrow} 
\mu^\sharp_{S_Y}\cl H_1= \cl H_2=\bra \cl R_2, \cl K\ket$ 
does not change the simples in $\cl K$ means that 
$\Ext^1(\cl K, S_Y)=0$ and the hypotheses of 
Proposition \ref{prop_lift_simple_tilt} are satisfied. 
Therefore $\ol{\cl H_2} = \ol{\mu^\sharp_{S_Y}\cl H_1}$ equals $
\mu^\sharp_{\ol{S_Y}}\ol{\cl H_1}$, which shows that for any 
silting mutation there is a uniquely defined corresponding 
simple tilt.
\par
Now let $\ol{\cl H} \in |\EGb(\pvd(e\Gamma e))|$ and 
assume $\cl H\supset \cl K$ as in Jin's correspondence 
\eqref{Jin_map}. We consider a simple tilt 
\begin{equation}\label{st}
	\ol{\cl H} \stackrel{\ol{S}}{\longrightarrow} 
\mu_{\ol{S}}^\sharp\ol{\cl H}
\quad \text{in }\EGb(\pvd(e\Gamma e)).
\end{equation}
Combining Propositions \ref{ExConvRep} 
and \ref{prop_lift_simple_tilt} as in Theorem \ref{quot_graph}, 
the tilt \eqref{st} can be lifted to a simple tilt 
\begin{equation}\label{mm}
\cl H_1 \stackrel{S}{\longrightarrow}{\cl H_2}
\quad \text{in }\EGp(\pvd\Gamma)
\end{equation} 
such that $\ol{\cl H_1}=\ol{\cl H}$, 
$\ol{\cl H_2}=\mu_{\ol{S}}^\sharp\ol{\cl H}$, 
and $\Ext^1(\cl V\cap \cl H_1, S)=0$. 
Under simple-projective duality 
for $\Gamma$ (Theorem \ref{thm:KN2}), the simple 
tilt \eqref{mm} corresponds to a silting mutation 
\begin{equation}\label{oo}
	\bY_1=\bY_{\cl R_1}\oplus\bY_{\cl V\cap \cl H_1} \stackrel{Y_S}{\longrightarrow} 
\bY_2=\bY_{\cl R_2}\oplus\bY_{\cl V\cap \cl H_2},
\end{equation}
with $Y_S$ a summand of $\bY_{\cl R_1}$. The condition 
$\Ext^1(\cl V\cap \cl H_1, S)=0$ implies 
that $(Y_S)^\sharp$ does not depend on 
summands of $\bY_{\cl V\cap \cl H_1}$, therefore the partial silting 
mutation 
$\bY_{\cl R_1}\stackrel{Y_S}{\longrightarrow}\bY_{\cl R_2}$ 
is well-defined.
\par
Note that $\cl H_1$ and $\cl H_2$ need 
not to contain $\cl K$. However, 
by construction (proof of Proposition \ref{ExConvRep}) 
and thanks to the geometric assumption, 
$\cl H_1$ is obtained from $\cl H$ 
by finitely many simple tilts at objects in $\cl K$. 
Therefore, comparing
\[\begin{aligned}
\cl H &\stackrel{\iota_\Gamma}{\longleftrightarrow} 
\bY=\bY_{\cl R}\oplus \bY_{\cl K}, \text{ and}\\
\cl H_1 &\stackrel{\iota_\Gamma}{\longleftrightarrow} 
\bY_1=\bY_{\cl R_1}\oplus \bY_{\cl V\cap \cl H_1},
\end{aligned}\]
we have that $\bY_{\cl R_1}=\bY_{\cl R}=\ivc(\ol{\cl H})$. 
Similarly, it follows from Proposition \ref{cor_lifting_simplefixed}, 
that there exists $\cl G\in|\EGp(\pvd\Gamma)|$ such that 
$\cl G\supset \cl K$, 
$\ol{\cl G} = \mu_{\ol{S}}^\sharp\ol{\cl H}=\ol{\cl H_2}$, 
and with 
$\iota_\Gamma \left(\cl G\right)= \bY_{\cl R_2}\oplus\bY_{\cl K}$. 
Therefore \eqref{oo} restricts to 
\[\ivc(\ol{\cl H}) \stackrel{Y_S}{\longrightarrow} 
\ivc(\mu_{\ol{S}}^\sharp\ol{\cl H}).\]
\end{proof}

\printbibliography

\end{document}